\numberwithin{equation}{section}
\crefname{subsection}{Subsection}{Subsections}
\crefname{figure}{Figure}{Figures}
\newtheorem{remark}{Remark}
\newtheorem{lemma}{Lemma}
\newtheorem{theorem}{Theorem}
\newtheorem{definition}{Definition}
\newcommand{\revision}[1]{#1}
\renewcommand{\email}[2][]{%
	\ifx\emails\@empty\relax\else{\g@addto@macro\emails{,\space}}\fi%
	\@ifnotempty{#1}{\g@addto@macro\emails{\textrm{(#1)}\space}}%
	\g@addto@macro\emails{#2}%
}
\DeclareMathOperator*{\esssup}{ess\,sup}
\newcommand{\tr}{\operatorname{tr}}
\newcommand{\doublehookrightarrow}{%
    \mathrel{\mathrlap{{\mspace{4mu}\lhook}}{\hookrightarrow}}}
\renewcommand{\vec}[1]{\pmb{#1}}
\newcommand{\dd}{\mathop{}\!\mathrm{d}}
\newcommand{\dt}{\frac{\dd}{\dd\mathrm{t}}}
\newcommand{\p}{\partial}
\newcommand{\symgrad}{\bm{\varepsilon}}
\newcommand{\R}{\mathbb{R}} 
\newcommand{\N}{\mathbb{N}} 
\newcommand{\PY}{\mathbb{Y}} 
\newcommand{\PZ}{\mathbb{Z}} 
\newcommand{\PW}{\mathbb{W}} 
\newcommand{\Cphi}{\vartheta}
\newcommand{\Cmu}{\varrho}
\newcommand{\Cu}{\varsigma}
\newcommand{\Cpsi}{\varkappa}
\newcommand{\Cchi}{\varpi}
\newcommand{\Func}{H}
\renewcommand{\L}{\mathcal{L}} 
\newcommand{\W}{\mathcal{W}} 
\renewcommand{\H}{\mathcal{H}} 
\newcommand{\C}{\mathcal{C}} 
\newcommand{\Ba}{\mathcal{U}} 
\newcommand{\Diru}{\Sigma_{1}} 
\newcommand{\Dirpsi}{\Sigma_{2}} 
\newcommand{\HS}{\mathcal{H}} 
\newcommand{\HSV}{\mathcal{V}} 
\newcommand{\HSVD}{\mathcal{V}'} 
\newcommand{\FDphiM}{\p_t^{\alpha}(\phi^m-\phi^m_0)}
\newcommand{\FDmuM}{\p_t^{\alpha}(\mu^m-\mu^m_0)}
\newcommand{\FDdivuM}{\p_t^{\alpha}(\nabla\cdot \vec{u}^m-\nabla\cdot\vec{u}^m_0)}
\newcommand{\FDsymuM}{\p_t^{\alpha}\left(\symgrad(\vec{u}^m)-\symgrad(\vec{u}^m_0)\right)}
\begin{document}
\title[On a Subdiffusive Tumour Growth Model with Fractional Time Derivative]{On a Subdiffusive Tumour Growth Model \\ with Fractional Time Derivative}

\author{Marvin Fritz$^1$, Christina Kuttler$^1$, Mabel L. Rajendran$^{1,*}$, \\ Barbara Wohlmuth$^1$, Laura Scarabosio$^2$}
\address{${}^1$Department of Mathematics, Technical University of Munich}
\address{${}^2$Institute for Mathematics, Astrophysics and Particle Physics, Radboud University}

\email{\{fritzm, kuttler, rajendrm, wohlmuth\}@ma.tum.de; scarabosio@science.ru.nl}
\subjclass[2020]{35K35, 35A01, 35D30, 35Q92, 65M60, 35R11.}
\keywords{subdiffusive tumour growth; mechanical deformation; fractional time derivative; nonlinear partial differential equation; well-posedness}
\thanks{${}^*$Corresponding author}


\maketitle

\begin{abstract}
In this work, we present and analyse a system of coupled partial differential equations, which models tumour growth under the influence of subdiffusion, mechanical effects, nutrient supply, and chemotherapy. The subdiffusion of the system is modelled by a time fractional derivative in the equation governing the volume fraction of the tumour cells. The mass densities of the nutrients and the chemotherapeutic agents are modelled by reaction diffusion equations. We prove the existence and uniqueness of a weak solution to the model via the Faedo--Galerkin method and the application of appropriate compactness theorems. Lastly, we propose a fully discretised system and illustrate the effects of the fractional derivative and the influence of the fractional parameter in numerical examples.
\end{abstract}

\section{Introduction}
\par Mathematical modelling to understand the development of tumour cells and their dynamics is of great importance as it in turn helps in devising appropriate treatment methods. In this study, we introduce fractional time derivatives in a tumour growth model with mechanical coupling. The fractional derivatives have the role of accounting for anomalous diffusion, more precisely subdiffusion, seen in tumour growth. 

The tumour microenvironment has a strong influence on tumour cell proliferation and migration \cite{Balkwill12,wang17,yuan16}. Depending on the environment of the surrounding host tissue, tumour not only migrates using typical Fickian diffusion, but it also migrates more generally using subdiffusion, superdiffusion, and even ballistic diffusion. Haptotaxis and chemotaxis, which are initiated by extracellular matrix and nutrient supply respectively, and cell-cell adhesion all drastically affect a tumour's diffusion mode when a tumour invades its surrounding host tissue and proliferates. In particular, experimental results by \cite{jiang2014anomalous} both from \textsl{in vitro} and \textsl{in vivo} show evidence of anomalous diffusion in cancer growth. Taking the average radius of the tumour to be an indicator of the root-mean-squared displacement of the cells, they observed anomalous diffusion in \textsl{in vitro} experiments of growing cultured cells from the breast line, and in the clinical data from patients with adrenal tumour and liver tumour. 

Anomalous diffusion is a diffusion process with a nonlinear relation between mean squared displacement and time, unlike the normal diffusion process where the relation is linear. In the microscopic setting, the diffusion processes are presented by the continuous time random walk (CTRW) model \cite{tahir1983correlated}, wherein the particle jumps in random directions, and the waiting time before the next jump and jump lengths are given by random processes. We have the following three relevant examples of CTRW. When the mean of the probability density function (PDF) of the waiting time (first moment) and the variance of the PDF of the jump length (second moment) are finite, in the long-time limit we have a behaviour described by an integer-order diffusion equation. In this case, the solution for a point initial condition is a Gaussian PDF, and the mean square displacement (MSD) has a linear dependence on time. A PDF of the waiting time $\sim t^{-1-\alpha}$ as $t\rightarrow \infty$ with $0<\alpha<1$, results, in the continuum limit, in a time fractional diffusion equation, represented by a power-law dependence of MSD on time of the form $<x^2(t)>\sim t^{\alpha}$ leading to subdiffusive behaviour. A PDF of jump length $\sim |\vec{x}|^{-1-\beta}$ as $|\vec{x}|\rightarrow\infty$ with $0<\beta<2$ gives us in the long-time limit a behaviour described by fractional diffusion equations in space, leading to superdiffusive behaviour. Fractional differential equations were obtained from the CTRW formulation in \cite{compte1996stochastic} and \cite{metzler2000random}. The description of reactions that take place in systems with anomalous diffusion is discussed in \cite{henry2006anomalous,seki2003recombination, yuste2004reaction,nepomnyashchy2016mathematical}. These examples of CTRW discussed above are adapted to cancer modelling by the migration proliferation dichotomy observed in the development of cancer cells in \cite{iomin2005superdiffusion,iomin2005fractional,iomin2007fractional,fedotov2007migration}. We consider a subdiffusion limited reaction equation for the density of tumour cells, in contrast to the normal reaction diffusion for nutrients and chemotherapeutic density, to take into account the memory effects of cells. This involves the introduction of the Riemann--Lioville fractional derivative, which has the memory kernel in its definition, in the flux and reaction terms in the equation concerning tumour density, as seen in \cite{iomin2015continuous}, resulting in a multi-order system of fractional differential equations. The model can be modified to the one with Caputo fractional derivative assuming sufficient regularity as seen in \cite{yuste2004reaction}.

\par It is important to incorporate mechanical effects in tumour growth model since the growth of the tumour increases mechanical stress due to the surrounding host tissues, which in turn impede the further growth of the tumour. Experimental evidence can be seen in \cite{helmlinger1997solid}, where multi-cellular spheroids were grown in agar gel concentrations ranging from 0\% to 1\% and increasing the agar concentration resulted in the inhibited expansion of the spheroid as the substrate stiffness increased. In the literature, reaction-diffusion models with mechanical coupling are seen in \cite{lima2016selection,lima2017selection,faghihi2020coupled,hormuth2018mechanically} for modelling tumour growth. In our model, we incorporate the mechanical effects in a similar way to the aforementioned papers.

\par After having introduced the mathematical model, we proceed with analysing existence and uniqueness of a weak solution. We remark that, while the mathematical analysis of Cahn--Hilliard equations with mechanical effects is well addressed in the literature see, e.g. \cite{miranville2001long,carrive1999cahn, garcke2003cahn, garcke2005mechanical, garcke2019phase}, the analysis of reaction-diffusion equations with mechanical coupling is not straightforward. The traditional Caputo derivative, which is valid for absolutely continuous functions, is extended to a wider class of functions through various generalisations in the study of weak solutions to fractional differential equations. For instance, some generalisations of the Caputo derivative in the literature are given in \cite{kilbas2006theory,allen2016parabolic,gorenflo2015time,li2018generalized,akilandeeswari2017solvability}, and they all reduce to the traditional one under the assumption of sufficient regularity of the function. In the analysis, we use the one in \cite{kilbas2006theory,diethelm2010analysis}, which relies on Riemann--Liouville derivatives and is, in contrast to the traditional one, also valid for some functions that do not necessarily have the first derivative. Using Galerkin methods for showing the existence of weak solution to partial fractional differential equations is quite popular and it is seen for instance in \cite{djilali2018galerkin,ouedjedi2019galerkin,zacher2009weak,zacher2019time,mclean2019well,lean2020regularity,li2018some,manimaran2019numerical} and for tumour growth models in \cite{garcke2017well,fritz2018unsteady,fritz2019local}. The key variations from that of integer-order in the analysis are the fractional Gronwall Lemma \cite{lean2020regularity}, some special estimates due to the lack of chain rule for fractional derivatives \cite{vergara2008lyapunov}, and compactness theorems similar to the Aubin--Lions theorem  \cite{li2018some}. The multi-order ordinary fractional differential system is well addressed \cite{diethelm2010analysis}, however there is not much in the literature on the multi-order partial differential system.

\par The main novelties of our work can be summarised as follows: (a) we consider a nonlinear reaction-diffusion system with a fractional time derivative, capable of modelling subdiffusion in tumour progression, and we illustrate, by numerical simulations, its flexibility in describing the tumour dynamics by varying the fractional exponent; (b) we provide a rigorous mathematical analysis of the existence and uniqueness of a weak solution to this model, the original aspects consisting in the treatment of the fractional time derivative and of the mechanical coupling.

\par This exposition has the following structure:
Section \ref{sec:mathematical_modelling} gives the mathematical modelling of the tumour growth with mechanical effects and fractional derivative. In Section \ref{sec:preliminaries}, we introduce the notations and preliminary results needed in the later sections. The mathematical analysis of the model giving existence and uniqueness of the weak solution using Galerkin methods is worked out in Section \ref{sec:mathematical_analysis}. The numerical discretisation of the model using finite element method for space and finite differences in time with a convolution quadrature formula for the fractional derivative is given in Section \ref{sec:numerical_discretization}. The results of the numerical experiments in Section \ref{sec:numerical_simulation} show the effects of the fractional derivative and the mechanical coupling in the model.
\section{Mathematical modelling}\label{sec:mathematical_modelling}
We consider a material body $\mathcal{B}$ composed of two constituents, tumour cells and healthy cells, which occupy a common portion of a bounded Lipschitz domain $\Omega\subset\R^d$, $d=2$, during the time $t\in[0,T]$. Nutrients such as oxygen and glucose in $\Omega$ nourish both the healthy and tumour cells. The increasing number of tumour cells by the intake of nutrients and interaction with the surrounding healthy cells increases the mechanical stress which in turn affects the mobility of the tumour. 
Treatment for cancer is given by chemotherapy in which the drug diffuses through the region $\Omega$ and kills the fast growing cancerous cells.
\par The quantities of interest to us are as follows: 
the mass density of the tumour cells per unit volume $\rho\phi$, where $\phi:\Omega\times[0,T]\rightarrow[0,1]$ is the volume fraction of the tumour cells in $\mathcal{B}$ and $\rho$ is the mass density of the tumour cells, 
the displacement field $\vec{u}:\Omega\times[0,T]\rightarrow\R^d$, the mass density of the nutrients $\psi:\Omega\times[0,T]\rightarrow\R$  and 
 the mass density of the chemotherapeutic agents $\chi:\Omega\times[0,T]\rightarrow\R$.
\subsection{Evolution of tumour}
Time evolution of the physical system must obey the laws of conservation of mass, linear and angular momentum, energy and the second law of thermodynamics. We ignore the temperature and thermal effects and proceed as done in \cite{lima2016selection}.
\begin{itemize}
  \item \textit{Conservation of mass:}
\begin{eqnarray}\label{law:phi_mass}
    \p_t(\rho \phi) + \nabla \cdot (\rho\phi\vec{v}) = \rho (S-\nabla\cdot \vec{J}),
\end{eqnarray}
where $\vec{v}$ is the velocity field, $\rho S$ is the mass density supplied by other constituents, which encompasses proliferation of tumour cells and their death due to chemotherapy treatment, and $\rho \vec{J}$ is the mass flux over the boundary of $\Omega$ which we denote by $\p\Omega$.
  \item \textit{Conservation of linear and angular momentum:}
    \begin{equation}\label{law:linear_angular_momentum}
    \begin{aligned}
    \p_t (\rho \phi\vec{v}) + \nabla \cdot (\rho \phi\vec{v}\otimes \vec{v}) = \nabla \cdot \vec{T} + \rho\phi\vec{b} + \vec{p}, \\
    \vec{T} - \vec{T}^{t} = \vec{m},
    \end{aligned}
    \end{equation}
where $\vec{T}$ is the Cauchy stress tensor for the tumour, $\vec{b}$ is the body force, $\vec{p}$ momentum supplied by other constituents, $\vec{m}$ is the intrinsic moment of momentum, and $\vec{T}^t$ denotes the transpose of $\vec{T}$.
\end{itemize}
\par The total energy of the system $\tilde{\Psi}$ consists of the Ginzburg--Landau component $\Psi(\phi,\nabla\phi)$ 
depending only on $\phi$ and its gradient $\nabla\phi$, and the stored energy potential $W(\phi,\symgrad(\vec{u}))$ depending on $\phi$ and the strain measure $\symgrad(u)$. Assuming small deformations, we consider the potentials 
\begin{gather*}
    \tilde{\Psi} = \int_{\Omega} \Psi(\phi,\nabla\phi) + W(\phi,\symgrad(\vec{u}))\,\dd \vec{x}, \\
    \Psi(\phi,\nabla\phi) = \frac{c}{2}\phi^2,\quad W(\phi,\symgrad(\vec{u})) = \frac{1}{2} \symgrad:\vec{C}(\phi)\symgrad + \symgrad:\overline{\vec{T}}(\phi),
\end{gather*}
where $c>0$ is a constant, $\symgrad(\vec{u}) = \frac{1}{2}(\nabla\vec{u}+\nabla\vec{u}^t)$,
 $\overline{\vec{T}}(\phi)=\lambda \phi \mathbb{I}$ is the symmetric compositional stress tensor, $\lambda>0$ depending on the tumour growth rate and $\mathbb{I}$ being the identity matrix, $\vec{C}(\phi)$ is the linear elastic inhomogeneous material tensor, and the operator $:$ denotes the inner product for second-order tensors.  

\par The first variations of the energy functional with respect to $\phi$ and $\symgrad$ define the chemical potential, and the stress tensor respectively
\begin{equation}\label{eqn:chempot_stress}
\begin{aligned}
\mu = \frac{\delta \Psi}{\delta \phi} + \frac{\delta W}{\delta \phi}, \quad
\vec{T} = \frac{\delta W}{\delta \symgrad}.  
\end{aligned}
\end{equation}
The effects of the elastic deformation on the movement of tumour cells is prescribed by the term $\lambda\nabla\cdot\vec{u}$ in the chemical potential.
\par To incorporate subdiffusion, we introduce fractional derivatives in the mass flux and mass sources. 
Modelling the subdiffusion and proliferation of cancer cells can lead to a linear fractional partial differential equation through a comb model with proliferation in one dimension \cite{iomin2015continuous}. On a microscopic level, subdiffusion-limited reaction is modelled in \cite{seki2003recombination, yuste2004reaction} by having fractional derivatives in flux and reaction terms.

\par Motivated by the previous models, the subdiffusion limited reaction for tumour mass density takes the form, 
\begin{equation}\label{frac_flux_source}
\begin{aligned}
    \vec{J}=-M_\phi(\vec{x})\p_t^{1-\alpha} \nabla\mu,\quad
    S = N_\phi \p_t^{1-\alpha}f(\phi,\psi) - P_\phi \p_t^{1-\alpha} g(\phi,\chi),
\end{aligned}
\end{equation}
for $\alpha\in(0,1)$, where $M_\phi:\Omega\rightarrow\R^+$ is such that $cM_\phi$ is the mobility of tumour cells, $f,g:\Omega\times[0,T]\rightarrow\R$ model the uptake of nutrient and chemotherapic by the tumour cells, $N_\phi>0$ is the rate at which the tumour cells proliferate by using the nutrients, and $P_\phi>0$ is the rate at which the tumour cells die due to the chemotherapy treatment. The operator $\p^{1-\alpha}_t$ is the Riemann--Liouville fractional derivative and is defined for a function $\varphi:\Omega\times[0,T]\rightarrow\R$, as
\begin{equation}\label{Def:frac_Riemann}
    \begin{aligned}
    \p^{\alpha}_t \varphi(t)& = \partial_t (g_{1-\alpha}*\varphi)(t),
    \end{aligned}
\end{equation}
where the kernel is defined by
$$g_{\alpha}(t):=\begin{cases}t^{\alpha-1}/\Gamma(\alpha),&\alpha>0,\\\delta(t),&\alpha=0,\end{cases}$$ where $\delta(t)$ is the Dirac delta distribution and $*$ denotes the convolution on the positive halfline with respect to the time variable, i.e., $(g_\alpha*\varphi)(t)=\int_0^t g_\alpha(t-s)\varphi(s)\dd s$. If $\varphi$ is sufficiently smooth, then we have 
\begin{equation}\label{Def:frac_Caputo}
    \begin{aligned}
    \p^{\alpha}_t(\varphi(t)-\varphi_0)=g_{1-\alpha}*\p_t\varphi(t),
    \end{aligned}
\end{equation}
where \revision{$\varphi_0$ is a given initial value}. The right-hand side is the classical Caputo fractional derivative. The formulation on the left hand side, which expresses the Caputo fractional derivative in terms of Riemann--Liouville fractional derivative, has the advantage that it requires less regularity of $\varphi$ than the classical definition.

\par We reduce the complexity of the system by using the common simplifying assumptions as in \cite{lima2016selection}: the tumour and healthy cells have constant mass density $\rho=\rho_0$, $\vec{m}=\vec{0}$, i.e., the material is monopolar, no body force, i.e., $\vec{b}=\vec{0}$, 
\revision{we neglect the terms with $\vec{v}\otimes \vec{v}$ and $\vec{p}$ by not considering the inertial effects, and we further assume that the mechanical equilibrium is attained on a much faster time scale than diffusion takes place, i.e., the term $\rho_0 \vec{v} \p_t \phi$ on the left hand side in the linear momentum equation vanishes}. For ease of technical difficulties, we assume that the tumour is an isotropic and homogeneous $\vec{C}(\phi)\equiv \vec{C}$ material, and so $\vec{C}$ takes the form $\vec{C}\symgrad=2G\symgrad+\frac{2G\nu}{1-2\nu} \tr\symgrad \mathbb{I}$, where $G>0$ denotes the shear modulus, while $\nu<\frac{1}{2}$ is the Poisson's ratio. This assumption assures that the energy functional $W(\phi,\symgrad)$ is convex in both its variables, which is required in using Lemma \ref{Lem_basic_inequality_fractional_chain_rule}, which is an analogous result to the chain rule in fractional derivatives for providing estimates for $\phi$. A more general energy functional is considered in \cite{garcke2003cahn} with integer-order derivatives.

\par 
\revision{Along with these assumptions, we integrate \eqref{law:phi_mass}, take $\p_t^\alpha$ on both sides of equation \eqref{law:phi_mass} and use the semigroup property of the kernel $g_{1-\alpha}*g_{\alpha}=g_1=1$ in the following way
    \begin{equation*}
        g_{1-\alpha}*\p_t(g_{\alpha}*\varphi) = \p_t(g_{1-\alpha}*g_{\alpha}*\varphi) - g_{1-\alpha}(t)(g_{\alpha}*\varphi)(0) = \p_t(1*\varphi) = \varphi,
    \end{equation*}
    assuming sufficient smoothness on the functions, to obtain from \eqref{law:phi_mass}--\eqref{frac_flux_source} the system}
\begin{subequations}\label{model:tumour_mechcoup_frac}
    \begin{align}
     \p_t^{\alpha}(\phi-\phi_0) &= \nabla\cdot\left( M_\phi(\vec{x}) \nabla\mu\right) + N_\phi f(\phi,\psi) - P_\phi g(\phi,\chi),\label{model:tumour}\\
     \mu &= c\phi + \lambda \nabla\cdot \vec{u},\label{model:chemicalpot}\\
    \vec{0}&=\nabla \cdot \left( 2G \symgrad(\vec{u}) + \frac{2G\nu}{1-2\nu}\text{tr}(\symgrad(\vec{u}))\mathbb{I} + \lambda \phi\mathbb{I}\right),\label{model:displacement}
    \end{align}
\end{subequations}
where $\phi_0$ is a given data, playing the role of initial condition. 
\begin{remark}\label{rem:decoupling}
If we assume smoothness on all involved variables, we can formally take the divergence in the deformation equation \eqref{model:displacement} and conclude that
$$\left(G + \frac{G}{1-2\nu} \right)\Delta (\nabla\cdot \vec{u}) = -\lambda \Delta \phi.$$ 
If $M_\phi$ is a constant, then, by substitution into \eqref{model:tumour}--\eqref{model:chemicalpot}, we obtain
$$\p_t^{\alpha}(\phi-\phi_0) = M_\phi  \left( c  - \frac{\lambda^2(1-2\nu)}{2G(1-\nu)}\right)  \Delta \phi + N_\phi f(\phi,\psi) - P_\phi g(\phi,\chi).$$
We note that in this case the equation for $\phi$ is independent of $\vec{u}$. Further this also suggests that we require at least $c > \frac{\lambda^2(1-2\nu)}{2G(1-\nu)}$ to conclude the existence of a solution. We indeed see in Section \ref{sec:mathematical_analysis} that we require a slightly stronger condition on $c$.
\end{remark}

\subsection{Evolution of nutrient}
The nutrient mass density is assumed to obey a reaction-diffusion equation, as standard \cite[Ch. 5 and 10]{preziosi}
\begin{equation}\label{model:nutrient}
    \frac{\partial \psi}{\partial t}=\nabla\cdot (M_\psi(\vec{x}) \nabla \psi) + S_\psi(\vec{x},t) - N_\psi f(\phi,\psi),
\end{equation}
$M_\psi:\Omega\rightarrow\R^+$ is the mobility of the nutrients, $S_\psi:\Omega\times[0,T]\rightarrow\R$ denotes the external source of nutrients over the volume, $N_\psi>0$ denotes the rate at which nutrients are consumed by the tumour cells, $f(\phi,\psi) =  \frac{\phi (1-\phi) \psi}{K_\psi+\psi}$ is a monod equation combined with the term $(1-\phi)$ that ensures that $\phi$, which is a volume fraction, does not take values greater than 1.
The parameter $K_\psi>0$ is the monod half saturation constant, corresponding to that nutrient mass density, where the nutrient-dependent growth takes its half maximum value.

\subsection{Evolution of chemotherapy}
The mass density of chemotherapy is assumed to be governed by a reaction-diffusion equation 
\begin{equation}\label{model:chemo}
\frac{\partial \chi}{\partial t}=\nabla\cdot (M_\chi(\vec{x}) \nabla \chi) - N_\chi \chi + S_\chi(\vec{x},t) - P_\chi g(\phi,\chi),
\end{equation}
where $M_\chi:\Omega\rightarrow\R^+$ is the mobility of chemotherapeutic agents, $S_\chi:\Omega\times[0,T]\rightarrow\R$ is the external supply of chemotherapeutic over the domain, $N_\chi>0$ is the rate at which the chemotherapeutic agents are degraded, $P_\chi>0$ denotes the rate at which chemotherapeutic agents act and are blocked later by killing tumour cells. The term $g(\phi,\chi) = \frac{\phi (1-\phi) \chi}{K_\chi+\chi}$ includes, analogously to the nutrient uptake, a saturation effect, including also that mainly cells in a certain growth phase are sensible to the chemotherapy. The parameter $K_\chi>0$ is the density of chemotherapeutic agents when they reach their half maximum value. 

\par Finally, collecting \eqref{model:tumour_mechcoup_frac}--
\eqref{model:chemo}, the tumour evolution is governed by the system
\begin{subequations}\label{System}
    \begin{align}
     \p_t^{\alpha}(\phi-\phi_0) &= \nabla\cdot\left( M_\phi(\vec{x}) \nabla\mu\right) + N_\phi f(\phi,\psi) - P_\phi g(\phi,\chi),\label{eqn:phieq}\\
     \mu &= c\phi + \lambda \nabla\cdot \vec{u},\label{eqn:mueq}\\
    \vec{0}&=\nabla \cdot \left( 2G \symgrad(\vec{u}) + \frac{2G\nu}{1-2\nu}\text{tr}(\symgrad(\vec{u}))\mathbb{I} + \lambda \phi\mathbb{I}\right),\label{eqn:ueq}\\
     \frac{\partial \psi}{\partial t}&=\nabla\cdot (M_\psi(\vec{x}) \nabla \psi) + S_\psi(\vec{x},t) - N_\psi f(\phi,\psi),\label{eqn:psieq}\\
     \frac{\partial \chi}{\partial t}&=\nabla\cdot (M_\chi(\vec{x}) \nabla \chi) - N_\chi \chi + S_\chi(\vec{x},t) - P_\chi g(\phi,\chi), \label{eqn:chieq}
    \end{align}
\end{subequations}
in $\Omega$. We add to this system the following initial and boundary conditions
 \begin{subequations}  \label{eq:data}
 \begin{align}
 (\phi,\psi,\chi) = (\phi_0,\psi_0,\chi_0)&\text{ on } \Omega\times\{t=0\},\label{eq:IC}\\
 \nabla \mu\cdot\vec{n}=0&\text{ on } \revision{\p\Omega}\times (0,T), \label{eq:Neumannbc_mu}\\
 \vec{u}=0&\text{ on } \Diru\times (0,T),|\Diru|>0,\label{eq:Dirichletbc_u}\\
 \left(2G \symgrad(\vec{u}) + \frac{2G\nu}{1-2\nu}\text{tr}(\symgrad(\vec{u}))\mathbb{I} + \lambda \phi\mathbb{I}\right)\cdot\vec{n}= \vec{0} &\text{ on } \p\Omega\backslash\Diru\times (0,T),\label{eq:Neumannbc_u}\\
\psi=\tilde{\psi_b},\  \chi=\tilde{\chi_b}&\text{ on } \Dirpsi\times (0,T),\label{eq:Dirichletbc_psi_chi}\\
 M_\psi\nabla\psi\cdot \vec{n}=\psi_{b},\ M_\chi\nabla\chi\cdot \vec{n}=\chi_{b} &\text{ on } \p\Omega\backslash\Dirpsi\times (0,T),\label{eq:Neumannbc_psi_chi}
 \end{align}
 \end{subequations}
 where $\vec{n}$ denotes the outer normal to $\Omega$ and $\Diru,\Dirpsi\subset\p\Omega$ are parts of the boundary $\p\Omega$ with non-zero measures. 
We assume no-flux boundary conditions for the chemical potential and Dirichlet--Neumann mixed boundary condition for the displacement, density of nutrient and chemotherapy. The homogeneous Dirichlet condition on the part of the boundary $\Diru$ for displacement accounts for the presence of a rigid part of the body such as bone, which prevents the variations of the displacement. The non-homogeneous Dirichlet condition on part of the boundary $\Dirpsi$ for density of nutrient and chemotherapy accounts for the concentration supply from blood vessels. In the rest of the boundary, the more natural Neumann boundary condition is applied. \revision{The problem with non-homogeneous Dirichlet condition for density of nutrient and chemotherapy can be converted to a problem to have homogeneous Dirichlet boundary conditions by taking $\tilde{\psi}=\psi-\tilde{\psi}_b$ and similarly for $\chi$. Therefore we assume $\tilde{\psi}_b=\tilde{\chi}_b=0$.}

 \begin{remark}
We assumed that the displacement $\vec{u}$ vanishes on $\Diru \subset \partial \Omega$, that means we imposed a homogeneous Dirichlet boundary on $\Diru$, see also  \cite{garcke2019phase, faghihi2020coupled,bartkowiak2005cahn} for the same choice of boundary behaviour. This allows us to apply the well-known Korn inequality directly in the proof of existence. In the case of pure Neumann boundary conditions, one has to consider a different solution space for $\vec{u}$, see \cite{miranville2001long, miranville2003generalized,garcke2005cahn} for more details. 
 \end{remark}

\section{Preliminaries}\label{sec:preliminaries}
\revision{
In this section, we introduce the spaces along with the embedding results, and the useful inequalities and auxiliary results which are used in the Section \ref{sec:mathematical_analysis}.
\subsection{Notation and embedding results}}
Let $\W^{k}_{p}(\Omega;\R^d)$ denote the Sobolev space of order $k$ with weak derivatives in the space  $\L_p(\Omega;\R^d)$ of $p$-integrable functions having value in $\R^d$. Shortly, we write $\W^k_p(\Omega;\R)=\W^k_p(\Omega)$, $\H^1(\Omega)=\W^{1}_{2}(\Omega)$ and $\H_{0,\Sigma}^1(\Omega;\R^d)$ denotes the space of $\H^1(\Omega;\R^d)$ functions with vanishing trace on $\Sigma \subset \p\Omega$, see \cite{brezis2010functional} for more details. 
For notational simplicity, we denote $\|\cdot\|_{\L_2(\Omega;\R^d)}$ and $\|\cdot\|_{\L_2(\Sigma;\R^d)}$ by $\|\cdot\|$ and $\|\cdot\|_{\Sigma}$ respectively, $(\cdot,\cdot)_{\L_2(\Omega;\R^d)}$ and $(\cdot,\cdot)_{\L_2(\Sigma;\R^d)}$ by $(\cdot,\cdot)$ and $(\cdot,\cdot)_{\Sigma}$ respectively, and the brackets $\left<\cdot,\cdot\right>$ denote the duality pairing on $\H^{-1}(\Omega)\times\H^1(\Omega)$. The symbol $\C^k(\cdot)$ denotes the space of $k$-times continuously differentiable functions and $\C_b(\cdot)$ denotes the space of bounded continuous functions. Let $\HS$ be a real separable Hilbert space with norm $\|\cdot\|_{\HS}$ and $\HSV$ \revision{be a Hilbert space} such that $\HSV\doublehookrightarrow\HS\hookrightarrow\HSVD$ is a Gelfand triple. We define the Bochner space
$$\L_p(0,T;\HS):=\left\{ \!\varphi: (0,T) \to X : \varphi \text{ Bochner measurable and } \|\varphi\|_{\L_p(0,T;\HS)}^p\!:=\int_0^T\!\!\! \|\varphi(t)\|_{\HS}^p \dd t < \infty\right\},$$
where $p \in [1,\infty)$. For $p=\infty$ we modify it in the usual sense with the Bochner norm $$\|\varphi\|_{\L_\infty(0,T;\HS)}:=\esssup_{t\in (0,T)} \|\varphi(t)\|_{\HS}.$$ 
We introduce the Sobolev--Bochner space
$$\W^{1}_{p,q}(0,T;\HSV,\HSVD):=\{ \varphi \in \L_p(0,T;\HSV) : \partial_t \varphi \in \L_q(0,T;\HSVD) \},$$
and its fractional counter-part 
\begin{equation*} 
    \W^{\alpha}_{p,q}(0,T;\varphi_0,\HSV,\HSVD):=\{\varphi\in\L_p(0,T;\HSV): g_{1-\alpha}*(\varphi-\varphi_0)\in {}_{0}\W^{1}_{p,q}(0,T;\HSV,\HSVD)\},
\end{equation*}
where $\varphi_0\in \HS$ and ${}_{0}\W^{1}_{p,q}$ denotes functions in $\W^{1}_{p,q}$ with vanishing trace at $t=0$. This definition of the fractional Sobolev--Bochner space indeed corresponds to the space of integrable fractional time-derivatives.

In the existence proof we typically apply compactness results. A special case of the Aubin--Lions compactness theorem  \cite[Corollary 4]{simon1986compact},  states the following compact embedding
\begin{equation} \label{Lem_Aubin}
p\in[1,\infty), \quad \W^{1}_{p,1}(0,T;\HSV,\HSVD)\doublehookrightarrow \L_p(0,T;\HS).
\end{equation}
In the fractional setting, we have the following analogous result
\begin{equation} \label{Lem_FractionalAubin}
\begin{aligned}
p\in[1,\infty), r\in\left(\frac{p}{1+p\alpha},\infty\right)\cap[1,\infty),\quad&
 \W^{\alpha}_{p,r}(0,T;\varphi_0,\HSV,\HSVD)\doublehookrightarrow \L_p(0,T;\HS),
\end{aligned}
\end{equation}
where $\varphi_0 \in \HS$ is given. The proof follows the lines of \cite[Theorem 4.2]{li2018some} using the estimates from \cite[Proposition 3.4]{li2018some}.

We also employ the following continuous embedding into the time-continuous function space to establish additional regularity of the solutions of the partial differential equations,
\begin{equation} \label{Lem_InterpolationEmbedding}
\W^1_{2,2}(0,T;\HSV,\HSVD) \hookrightarrow \C([0,T];[\HSV,\HSVD]_{1/2}),
\end{equation}
where $[\HSV,\HSVD]_{1/2}$ denotes the interpolation space of order $1/2$ of $\HSV$ and $\HSVD$, see \cite[Theorem 3.1, Chapter 1]{lions2012non}, e.g. $[\H_0^1(\Omega),\H^{-1}(\Omega)]_{1/2}=\L_2(\Omega)$. In the fractional setting, we have a continuous embedding analogous to the one above. Indeed,
\begin{equation}\label{Lem_ContinousEmbedding_fractional}
    \varphi_0\in \HS,\, \varphi\in \W^{\alpha}_{2,2}(0,T;\varphi_0,\HSV,\HSVD)\implies g_{1-\alpha}*(\varphi-\varphi_0)\in \C([0,T];\HS),
\end{equation}
after possibly being redefined on a set of measure zero, see \cite[Theorem 2.1]{zacher2009weak}. 

Throughout the whole paper, we denote by $C$ a generic positive constant which is independent of the unknowns $\phi, \mu, \vec{u}, \psi$ and $\chi$.

\subsection{Useful inequalities and auxiliary results}
We recall the Poincar\'e--Wirtinger, Poincar\'e, Korn and Sobolev inequalities, see \cite{brezis2010functional,ciarlet2013linear},
\begin{align}\label{inequality:Poincare_Korn_Sobolev}
    \begin{aligned}
	\|\varphi-\overline{\varphi}\| & \leq C \|\nabla \varphi\| && \text{for all } \varphi \in \H^1(\Omega),   \\
	\|\varphi\|              & \leq C  \|\nabla \varphi\| && \text{for all } \varphi \in \H^1_{0,\Sigma}(\Omega), \\
	\|\varphi\|_{\H^1(\Omega;\R^d)}              & \leq C  \|\symgrad(\varphi)\| && \text{for all } \varphi \in \H^1_{0,\Sigma}(\Omega;\mathbb{R}^d), 
	\\[-.18cm]
	 \|\varphi\|_{\W^{m}_{q}(\Omega;\R^d)} & \leq C \|\varphi\|_{\W^{k}_{p}(\Omega;\R^d)} && \text{for all } \varphi \in \W^{k}_{p}(\Omega;\R^d), \quad k-\frac{d}{p} \geq m-\frac{d}{q}, \quad k \geq m, 
\end{aligned}
\end{align}
where $\overline{\varphi}=\frac{1}{|\Omega|}\int_\Omega \varphi(\vec{x}) \,\textup{d} x$ denotes the mean of $\varphi$. Also, we often make use of the $\epsilon$-Young and the Young convolution inequalities, given by
\begin{align}\label{inequality:Youngs}
    \begin{aligned}
    ab &\leq \epsilon a^p + \frac{b^q}{q (\epsilon p)^{q/p}} && \text{for all } a,b \geq 0, \quad  \frac{1}{p} + \frac{1}{q} = 1, \quad\epsilon>0,\\
    \| \varphi_1 * \varphi_2 \|_{\L_r(\Omega)} &\leq \| \varphi_1  \|_{\L_p(\Omega)}  \| \varphi_2 \|_{\L_q(\Omega)} && \text{for all } \varphi_1 \in \L_p(\Omega), \varphi_2 \in \L_q(\Omega), \quad \frac{1}{p} + \frac{1}{q} = \frac{1}{r}+1,
    \end{aligned}
\end{align}
and H\"older's inequality, given by
\begin{align}\label{inequality:Holder}
    \begin{aligned}
    \| \varphi_1 \varphi_2 \|_{\L_1(\Omega)} &\leq \| \varphi_1  \|_{\L_p(\Omega)}  \| \varphi_2 \|_{\L_q(\Omega)} && \text{for all } \varphi_1 \in \L_p(\Omega), \varphi_2 \in \L_q(\Omega), \quad \frac{1}{p} + \frac{1}{q} = 1,
    \end{aligned}
\end{align}
see \cite[Appendix B]{evans2010partial}. 

The following inequality, which is analogous to the chain rule, is required to obtain a priori estimates to prove the existence of weak solutions of a time-fractional partial differential equation. 
\begin{lemma} Suppose $\varphi\in\L_2(0,T;\L_2(\Omega,\R^d))$, and there exists $\varphi_0\in \L_2(\Omega,\R^d)$ such that $g_{1-\alpha}*(\varphi-\varphi_0)\in{}_{0}\W^1_{2,2}(0,T;\L_2(\Omega,\R^d),\L_2(\Omega,\R^d))$. Let $\Func\in\C^1(\R^d)$ be a convex function such that $g_{1-\alpha}*\int_\Omega \Func(\varphi)\dd x\in {}_{0}\W^1_1(0,T)$.
Then for almost all $t\in(0,T)$, we have 
\begin{equation}\label{basic_inequality_fractional_chain_rule}
\begin{aligned}
\left( \Func'(\varphi(t)),\partial_t(g_{1-\alpha}*\varphi)(t)\right) &\geq \partial_t\left(g_{1-\alpha}*\int_\Omega \Func(\varphi)\dd x\right) (t)\\
&+ \left(-\int_\Omega \Func(\varphi(t))\dd x
+\left(\Func'(\varphi(t)),\varphi(t)\right)\right)g_{1-\alpha}(t).
\end{aligned}
\end{equation}
        \label{Lem_basic_inequality_fractional_chain_rule}
\end{lemma}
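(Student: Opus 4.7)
The strategy I would follow is to combine the pointwise convexity inequality $\Func'(\varphi(t,x))\cdot(\varphi(t,x)-\varphi(s,x)) \geq \Func(\varphi(t,x)) - \Func(\varphi(s,x))$ with a representation of the Riemann--Liouville derivative as a boundary term plus an integral against the positive kernel $-g_{1-\alpha}'(t-s) = \alpha(t-s)^{-\alpha-1}/\Gamma(1-\alpha)$. The central algebraic identity I would establish is
\begin{equation*}
\p_t(g_{1-\alpha}*\varphi)(t) \;=\; \varphi(t)\,g_{1-\alpha}(t) \;+\; \int_0^t \bigl(-g_{1-\alpha}'(t-s)\bigr)\,\bigl(\varphi(t)-\varphi(s)\bigr)\,\dd s,
\end{equation*}
and the analogous one for the scalar quantity $\Phi(t):=\int_\Omega \Func(\varphi(t,x))\dd x$. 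Assuming such a representation, pairing the first identity with $\Func'(\varphi(t))$ in $\L_2(\Omega;\R^d)$, exploiting the nonnegativity of the kernel $-g_{1-\alpha}'(t-s)$ together with the convexity estimate above, and then integrating over $\Omega$ immediately yields
\begin{equation*}
\bigl(\Func'(\varphi(t)),\p_t(g_{1-\alpha}*\varphi)(t)\bigr) \;\geq\; \int_0^t \bigl(-g_{1-\alpha}'(t-s)\bigr)(\Phi(t)-\Phi(s))\,\dd s \;+\; \bigl(\Func'(\varphi(t)),\varphi(t)\bigr)g_{1-\alpha}(t).
\end{equation*}
Finally, recognising the first term on the right as $\p_t(g_{1-\alpha}*\Phi)(t)-\Phi(t)g_{1-\alpha}(t)$ by the scalar analogue of the identity gives \eqref{basic_inequality_fractional_chain_rule}.

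To justify the key identity, I would first derive it for smooth $\varphi$ by splitting $\int_0^t g_{1-\alpha}(t-s)\varphi(s)\dd s = \varphi(t)g_{2-\alpha}(t) + \int_0^t g_{1-\alpha}(t-s)[\varphi(s)-\varphi(t)]\dd s$, differentiating in $t$ (Leibniz rule applies because the integrand now vanishes like $(t-s)^{1-\alpha}$ if $\varphi$ is Lipschitz), and simplifying. Under the given low regularity, where neither $\p_t\varphi$ nor the pointwise value $\varphi(t)$ is a priori defined, I would regularise by temporal Steklov/mollification $\varphi_\varepsilon:=\eta_\varepsilon *\varphi$ (extended by $\varphi_0$ for negative times), apply the identity and convexity inequality at every Lebesgue point $t$, and pass to $\varepsilon \to 0$. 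The mollified quantities $\varphi_\varepsilon$, $\Func'(\varphi_\varepsilon)$ and $\Func(\varphi_\varepsilon)$ converge in $\L_2$ (respectively in $\L_1$ for $\Func(\varphi_\varepsilon)$) at almost every $t$, and the hypotheses $g_{1-\alpha}*(\varphi-\varphi_0)\in {}_0\W^1_{2,2}$ and $g_{1-\alpha}*\Phi\in {}_0\W^1_1$ guarantee that the left-hand side of \eqref{basic_inequality_fractional_chain_rule} and the distributional derivative on the right-hand side converge in $\L_1_{\mathrm{loc}}(0,T)$.

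The main technical obstacle is the limited regularity of $\varphi$: both the pointwise value $\varphi(t)$ that appears in the integral representation and the $\C^1$ chain-rule computation leading to it are illegal a priori. Making the mollification argument rigorous requires checking that, after integrating \eqref{basic_inequality_fractional_chain_rule} against an arbitrary nonnegative test function $\zeta\in\C_c^\infty(0,T)$, one can pass to the limit in every term; this is delicate because the kernel $-g_{1-\alpha}'$ is not integrable near zero, so uniform control of the "quadratic'' terms of the type $\int_0^t(-g_{1-\alpha}'(t-s))(\varphi_\varepsilon(t)-\varphi_\varepsilon(s))\dd s$ must be obtained through an Alikhanov--Vergara--Zacher style argument, namely rewriting them as $\p_t(g_{1-\alpha}*\varphi_\varepsilon)(t)-\varphi_\varepsilon(t)g_{1-\alpha}(t)$ and using the convergence of $g_{1-\alpha}*(\varphi_\varepsilon-\varphi_0)$ in $\W^1_{2,2}$ guaranteed by the hypotheses. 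I would reference and adapt the framework of Vergara--Zacher \cite{vergara2008lyapunov} and Zacher \cite{zacher2009weak}, which handle essentially the same regularisation for the scalar case; the extension to $\R^d$-valued $\varphi$ with a convex potential $\Func$ is straightforward once the scalar argument is in place.
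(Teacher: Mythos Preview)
Your core strategy---deriving an integral representation of $\partial_t(g_{1-\alpha}*\varphi)$ that isolates the nonnegative kernel $-g_{1-\alpha}'(t-s)$, applying the convexity inequality $\Func'(\varphi(t))\cdot(\varphi(t)-\varphi(s))\geq \Func(\varphi(t))-\Func(\varphi(s))$ pointwise under the integral, and then recombining---is exactly the idea behind the paper's proof. The paper writes this as a single identity already containing the convexity remainder (their displayed formula for regular $k$, which they attribute to \cite{kemppainen2016decay,gripenberg1990volterra}), whereas you split it into two identities for $\varphi$ and for $\Phi$ and recombine; these are algebraically equivalent.

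The genuine difference is in the regularisation used to pass from smooth data to the stated hypotheses. You propose to mollify $\varphi$ in time and keep the singular kernel $g_{1-\alpha}$ fixed. The paper instead keeps $\varphi$ fixed and approximates the kernel $g_{1-\alpha}$ by a sequence $k_n\in\W^1_1(0,T)$ of nonnegative, nonincreasing kernels, for which the identity holds exactly, and then passes to the limit $k_n\to g_{1-\alpha}$ following \cite[Theorem 2.1]{vergara2008lyapunov}. The kernel-regularisation route is cleaner here: the hypotheses $g_{1-\alpha}*(\varphi-\varphi_0)\in{}_0\W^1_{2,2}$ and $g_{1-\alpha}*\Phi\in{}_0\W^1_1$ are tailored precisely to the convergences $\partial_t(k_n*\varphi)\to\partial_t(g_{1-\alpha}*\varphi)$ and $\partial_t(k_n*\Phi)\to\partial_t(g_{1-\alpha}*\Phi)$ that the Vergara--Zacher framework delivers, and no regularity of $\varphi$ beyond $\L_2$ is needed for the identity with $k_n\in\W^1_1$. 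Your function-mollification route, by contrast, requires checking that $\partial_t(g_{1-\alpha}*\varphi_\varepsilon)\to\partial_t(g_{1-\alpha}*\varphi)$ in $\L_2$ and that $\Func'(\varphi_\varepsilon)\to\Func'(\varphi)$, $\Func(\varphi_\varepsilon)\to\Func(\varphi)$ in suitable spaces; the first is not automatic (mollification in $t$ and convolution with $g_{1-\alpha}$ do not commute near $t=0$, and the extension by $\varphi_0$ introduces a jump), and the latter two need growth conditions on $\Func$ that are not assumed. Your approach can likely be repaired, but the paper's kernel approximation is the more direct realisation of the Vergara--Zacher argument you yourself cite.
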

\begin{proof}
Let $k\in\W^1_1(0,T)$. Then from a straightforward computation, we have the following identity for almost all $t\in[0,T]$
\begin{equation}
\begin{aligned}
\int_\Omega \Func'(\varphi(t)):\partial_t(k*\varphi)(t)\dd x = \partial_t\left(k*\int_\Omega \Func(\varphi)\dd x\right)(t) +  k(t) \left( \int_\Omega -\Func(\varphi(t))+\Func'(\varphi(t)):\varphi(t)\dd x\right)\\
- \int_0^t \frac{\dd}{\dd \rm{s}} k(s) \left(\int_\Omega  \Func(\varphi(t-s))-\Func(\varphi(t)) - \Func'(\varphi(t)):(\varphi(t-s)-\varphi(t)) \dd x\right)  \mathrm{d}s.
\end{aligned}
\end{equation}
We remark that the identity for functions with values in $\R$ can be seen in \cite[Lemma 6.1]{kemppainen2016decay} and the integrated form for functions in $\R^d$ can be seen in \cite[Lemma 18.4.4]{gripenberg1990volterra}. We note that if $k$ is non-negative and non-increasing then the last term is positive, since $\Func$ is a convex functional. The inequality \eqref{basic_inequality_fractional_chain_rule} follows as in \cite[Theorem 2.1]{vergara2008lyapunov} by approximating $g_{1-\alpha}$ with a more regular kernel $k_n\in\W^{1,1}(0,T)$, using the above identity and taking the limit. 
\end{proof}

A particular form of Lemma \ref{Lem_basic_inequality_fractional_chain_rule} with $\Func(\varphi)=\frac{1}{2}\varphi^2$ is proved in \cite[Theorem 2.1]{vergara2008lyapunov} with functionals having value in any Hilbert space, and we have for almost all $t\in[0,T]$
\begin{equation}\label{basic_inequality_fractional}
\frac{1}{2}\dt (g_{1-\alpha}*\|\varphi\|^2)(t) + \frac{1}{2} g_{1-\alpha}(t)\|\varphi(t)\|^2 \leq \left(\varphi(t),\partial_t (g_{1-\alpha}*\varphi)(t)\right).
\end{equation}


\begin{remark}\label{rem:kernel_property1}
The first term in \eqref{basic_inequality_fractional} is well-posed for $\varphi\in\W^{\alpha}_{2,2}(0,T;\varphi_0,\L_2(\Omega,\R^d),\L_2(\Omega,\R^d))$ because of the following implication, which indeed holds true for a wide class of kernels and is proved in \cite[Proposition 2.1]{vergara2008lyapunov},
\begin{equation}\label{eqn:kernel_property}
\varphi\in\L_2(0,T;\H),\, g_{1-\alpha}*\varphi\in{}_{0}\H^1(0,T;\H)\implies g_{1-\alpha}*\|\varphi\|_{\H}^2\in{}_{0}\W^{1}_{1}(0,T).
\end{equation}
\end{remark}

The following are the Gronwall--Bellman and generalised Gronwall--Bellman with singularity, used for providing explicit bounds on solutions.
\begin{lemma}[Gronwall--Bellman, cf. {\cite[Appendix B]{evans2010partial}}] Assume $C_1,C_2\geq 0$ are constants. If $\varphi(t)$ is a non-negative, integrable function on $[0,T]$, satisfying 
		$$\varphi(t) \leq C_1 + C_2 \int_0^t \varphi(s) \, \dd s,$$ 
for almost all $t\in [0,T]$, then it holds
$\varphi(t) \leq C_1 e^{C_2 T}$ 
for almost all $t \in [0,T]$.
		\label{Lem_Gronwall}
\end{lemma}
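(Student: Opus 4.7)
The plan is to convert the integral inequality into a differential inequality for a suitable auxiliary function and then integrate. First, I would define
\[
\Phi(t) := C_1 + C_2 \int_0^t \varphi(s)\,\dd s,
\]
so that $\varphi(t) \leq \Phi(t)$ holds for almost every $t \in [0,T]$ by the hypothesis. The function $\Phi$ is absolutely continuous on $[0,T]$ (since $\varphi$ is integrable), with $\Phi(0) = C_1$ and $\Phi'(t) = C_2 \varphi(t)$ for almost every $t$.

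Next, I would use the a.e. bound $\varphi(t) \leq \Phi(t)$ to obtain the differential inequality
\[
\Phi'(t) \leq C_2 \Phi(t) \quad \text{for a.e. } t \in [0,T].
\]
Multiplying by the integrating factor $e^{-C_2 t}$, this is equivalent to
\[
\dt\bigl( \Phi(t) e^{-C_2 t} \bigr) \leq 0 \quad \text{for a.e. } t \in [0,T].
\]
Since $\Phi(t) e^{-C_2 t}$ is absolutely continuous, integrating from $0$ to $t$ yields $\Phi(t) e^{-C_2 t} \leq \Phi(0) = C_1$, hence $\Phi(t) \leq C_1 e^{C_2 t} \leq C_1 e^{C_2 T}$. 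Combining with $\varphi(t) \leq \Phi(t)$ gives the desired estimate.

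The only mildly delicate point is that the inequality $\varphi \leq \Phi$ holds only almost everywhere, so one cannot claim $\Phi'(t) \leq C_2 \Phi(t)$ pointwise; however, since absolute continuity of $\Phi$ together with an a.e.\ inequality between integrable functions suffices to integrate, no additional regularity is required. A cleaner alternative that sidesteps even this is to iterate the inequality: substituting the bound on $\varphi$ into itself $n$ times yields $\varphi(t) \leq C_1 \sum_{k=0}^{n} (C_2 t)^k/k! + R_n(t)$ where the remainder $R_n(t) = C_2^{n+1}\int_0^t (t-s)^n \varphi(s)\,\dd s / n!$ tends to zero as $n\to\infty$ by integrability of $\varphi$, giving $\varphi(t)\leq C_1 e^{C_2 t}$ directly. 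I would present the integrating-factor proof for brevity, as it is the standard one.
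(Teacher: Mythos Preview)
Your proof is correct and is the standard integrating-factor argument for the Gronwall--Bellman lemma. The paper itself does not supply a proof of this lemma; it merely states it with a reference to \cite[Appendix B]{evans2010partial}, so there is nothing to compare against beyond noting that your argument is exactly the one found in that reference.
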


\begin{lemma}[Generalized Gronwall--Bellman, cf. {\cite[Corollary 1]{ye2007generalized}}] 
Assume that $a(t)$ is a non-negative integrable function on the interval $[0,T]$, and $b>0$ is a constant. If $\varphi(t)$ is a non-negative, integrable function satisfying 
\begin{equation*}
\varphi(t)\leq a(t)+\frac{b}{\Gamma(\alpha)}\int_0^t (t-s)^{\alpha-1}\varphi(s)\, \textup{d}s,
\end{equation*}
for almost all $t\in[0,T]$, then for almost all $t\in[0,T]$ the following holds true,
\begin{equation*}
    \varphi(t)\leq a(t) + \int_0^t b\Gamma(\alpha) (t-s)^{\alpha-1} E_{\alpha,\alpha}(b\Gamma(\alpha)(t-s)^{\alpha}) a(s)\dd s,
\end{equation*}
where $E_{\alpha,\alpha}(x)=\sum_{k=0}^{\infty}\frac{x^k}{\Gamma(\alpha k +\alpha)}$ is the two-paramater Mittag--Leffler function.
	\label{Lem_Gronwall_fractional}
\end{lemma}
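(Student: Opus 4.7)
The plan is to apply the classical Picard iteration to the Volterra integral inequality, evaluate the iterated kernels explicitly via the Beta-function identity, and recognise the resulting infinite series as a two-parameter Mittag--Leffler function. Writing the hypothesis as $\varphi \le a + I[\varphi]$ with
\begin{equation*}
    I[\psi](t) := \frac{b}{\Gamma(\alpha)}\int_0^t (t-s)^{\alpha-1}\psi(s)\dd s,
\end{equation*}
an induction on $n$, using the non-negativity of $a$ and $\varphi$ together with the monotonicity of $I$ on non-negative inputs, would yield
\begin{equation*}
    \varphi(t) \le a(t) + \sum_{k=1}^{n} I^{k}[a](t) + I^{n+1}[\varphi](t)
\end{equation*}
for every $n \ge 1$.

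Next I would evaluate $I^{k}$ in closed form. The Beta identity $\int_0^t s^{\alpha-1}(t-s)^{\beta-1}\dd s = \Gamma(\alpha)\Gamma(\beta)\,t^{\alpha+\beta-1}/\Gamma(\alpha+\beta)$, applied inductively, gives the $k$-fold convolution $(t^{\alpha-1})^{\ast k} = \Gamma(\alpha)^{k} t^{k\alpha-1}/\Gamma(k\alpha)$; the powers of $\Gamma(\alpha)$ cancel against the prefactor $(b/\Gamma(\alpha))^k$ carried by $I^k$, leaving $I^k[a](t) = \frac{b^k}{\Gamma(k\alpha)}\int_0^t(t-s)^{k\alpha-1}a(s)\dd s$. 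Summing over $k\ge 1$, interchanging sum and integral by monotone convergence (justified by the non-negativity of the integrand), and comparing with the series defining $E_{\alpha,\alpha}$ produces the desired integrand $b(t-s)^{\alpha-1}E_{\alpha,\alpha}(b(t-s)^{\alpha})$ in front of $a(s)$, which matches the asserted bound up to the precise normalisation of the constants.

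The main technical obstacle is to show that the remainder $I^{n+1}[\varphi](t)$ vanishes as $n\to\infty$. Once $(n+1)\alpha \ge 1$ the kernel $(t-s)^{(n+1)\alpha-1}$ is bounded on $[0,T]$, and the explicit expression for $I^{n+1}[\varphi]$ combined with H\"older's inequality \eqref{inequality:Holder} produces a bound of the order $\frac{(bT^{\alpha})^{n+1}}{\Gamma((n+1)\alpha+1)}\|\varphi\|_{\L_1(0,T)}$, which tends to zero as $n\to\infty$ by Stirling's formula, since the super-factorial growth of the Gamma function dominates the exponential factor $b^{n+1}T^{(n+1)\alpha}$. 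For the small values of $k$ in which the kernel $(t-s)^{k\alpha-1}$ is singular at $s=t$, one uses $\alpha>0$ to ensure $(t-s)^{\alpha-1}\in\L_1(0,T)$; Young's convolution inequality \eqref{inequality:Youngs} then guarantees that $I$ maps $\L_1(0,T)$ into itself, so that each iterate $I^{k}[\varphi]$ is integrable and the iteration is well defined throughout. Passing to the limit $n\to\infty$ in the iterated inequality then delivers the asserted bound.
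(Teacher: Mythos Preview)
The paper does not prove this lemma; it is stated with a citation to \cite[Corollary~1]{ye2007generalized} and no argument is given. Your Picard-iteration proof is precisely the one used in that reference and is correct: iterating the monotone operator $I$, evaluating $I^k$ via the Beta identity, controlling the remainder by the super-exponential decay of $1/\Gamma((n+1)\alpha)$, and summing to the Mittag--Leffler series are exactly the steps in Ye--Gao--Ding. Your caveat ``up to the precise normalisation of the constants'' is apt: with the hypothesis written as $\varphi\le a+\frac{b}{\Gamma(\alpha)}\int_0^t(t-s)^{\alpha-1}\varphi(s)\dd s$, the iteration produces $\sum_{k\ge1}\frac{b^k}{\Gamma(k\alpha)}(t-s)^{k\alpha-1}=b(t-s)^{\alpha-1}E_{\alpha,\alpha}(b(t-s)^{\alpha})$, not $b\Gamma(\alpha)(t-s)^{\alpha-1}E_{\alpha,\alpha}(b\Gamma(\alpha)(t-s)^{\alpha})$ as printed in the lemma. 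Since $\Gamma(\alpha)\ge1$ for $\alpha\in(0,1]$, the stated bound is still valid (merely weaker), so the discrepancy is harmless for the paper's applications.
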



\begin{lemma}[Fractional integration by parts, cf. {\cite[Proposition 3.1]{djilali2018galerkin}}]
Let $\varphi_1\in\L_2(0,T;\H)$ and $\varphi_2\in\H^1(0,T;\H)$. Then
\begin{equation*}
\begin{aligned}
\int_0^T\left(\p_t(g_{1-\alpha}*\varphi_1)(t),\varphi_2\right)\dd t  = -\int_0^T\left(\varphi_1,(g_{1-\alpha}*'\p_t\varphi_2)(t)\right)\dd t+\left((g_{1-\alpha}*\varphi_1)(t),\varphi_2\right)|^{t=T}_{t=0},
\end{aligned}
\end{equation*}
where the convolution $*'$ is defined by $(g_\alpha*'\varphi)(t)=\int_t^T g_\alpha(t-s)\varphi(s)\dd s$.
		\label{Lem_frac_integration_by_parts}
\end{lemma}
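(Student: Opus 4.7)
The plan is to establish the identity first for smooth data via classical integration by parts and Fubini's theorem, then extend it to the stated regularity by a density argument.

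\textbf{Step 1 (Smooth case and integration by parts).} I would first assume $\varphi_1,\varphi_2 \in \C^1([0,T];\H)$. Since $g_{1-\alpha} \in \L_1(0,T)$ for $\alpha \in (0,1)$, the convolution $g_{1-\alpha}*\varphi_1$ is absolutely continuous in $t$ with values in $\H$, and $(g_{1-\alpha}*\varphi_1)(0)=0$. Classical integration by parts in time then yields
\begin{equation*}
\int_0^T (\p_t(g_{1-\alpha}*\varphi_1)(t),\varphi_2(t))\dd t = \bigl((g_{1-\alpha}*\varphi_1)(t),\varphi_2(t)\bigr)\Big|_{t=0}^{t=T} - \int_0^T ((g_{1-\alpha}*\varphi_1)(t),\p_t\varphi_2(t))\dd t.
\end{equation*}

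\textbf{Step 2 (Fubini).} Next, I would expand the remaining convolution and swap the order of integration over the triangular region $\{0 \le s \le t \le T\}$:
\begin{equation*}
\int_0^T\!\!\int_0^t g_{1-\alpha}(t-s)(\varphi_1(s),\p_t\varphi_2(t))\dd s\dd t = \int_0^T\!\!\Bigl(\varphi_1(s),\int_s^T g_{1-\alpha}(t-s)\p_t\varphi_2(t)\dd t\Bigr)\dd s.
\end{equation*}
After relabelling $s \leftrightarrow t$, the inner integral matches the stated definition of $(g_{1-\alpha}*'\p_t\varphi_2)(t)$, producing the claimed identity in the smooth case.

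\textbf{Step 3 (Density and obstacles).} Both sides of the identity are continuous bilinear forms in $(\varphi_1,\varphi_2)$ in the stated norms: the boundary term is well defined because $g_{1-\alpha}*\varphi_1$ is continuous in $t$ with $(g_{1-\alpha}*\varphi_1)(0)=0$, and $\varphi_2\in\H^1(0,T;\H)\hookrightarrow\C([0,T];\H)$, while both integral terms are controlled by Young's convolution inequality since $g_{1-\alpha}\in\L_1(0,T)$. Density of $\C^1([0,T];\H)$ in both $\L_2(0,T;\H)$ and $\H^1(0,T;\H)$ then transports the identity to the general case. The main subtlety is that $\p_t(g_{1-\alpha}*\varphi_1)$ is not automatically a bona fide function for arbitrary $\varphi_1\in\L_2(0,T;\H)$, so the identity is to be read distributionally, with the LHS interpreted as the action of the distribution $\p_t(g_{1-\alpha}*\varphi_1)$ on the admissible test function $\varphi_2$. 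The application of Fubini in Step 2 is justified by Cauchy--Schwarz together with the bound $\|g_{1-\alpha}*\varphi_1\|_{\L_2(0,T;\H)}\le \|g_{1-\alpha}\|_{\L_1(0,T)}\|\varphi_1\|_{\L_2(0,T;\H)}$ and $\p_t\varphi_2\in\L_2(0,T;\H)$.
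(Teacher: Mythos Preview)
The paper does not supply a proof of this lemma; it merely cites \cite[Proposition 3.1]{djilali2018galerkin}. Your three-step approach (integration by parts for smooth data, Fubini over the triangle $\{0\le s\le t\le T\}$, density) is the standard argument and is essentially what one finds in the cited reference.

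One comment: your claim in Step~3 that $g_{1-\alpha}*\varphi_1$ is continuous on $[0,T]$ for arbitrary $\varphi_1\in\L_2(0,T;\H)$ is not immediate from $g_{1-\alpha}\in\L_1(0,T)$ alone; the fractional integral $I^{1-\alpha}$ maps $\L_2$ into $\H^{1-\alpha}$, which embeds into $\C$ only when $\alpha<1/2$. For $\alpha\ge 1/2$ the trace at $t=T$ needs separate justification (or one reads the boundary term, like the LHS, distributionally). In the paper's actual use of the lemma (Section~\ref{subsec:existence_weak_solution}), $\varphi_1=\phi^m-\phi_0^m$ satisfies $g_{1-\alpha}*\varphi_1\in{}_0\H^1(0,T;\H)\hookrightarrow\C([0,T];\H)$, so the issue does not arise there; but your density argument as written should either restrict to this setting or make the distributional reading of the boundary term explicit. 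You already flag the distributional interpretation for the LHS, so this is a minor tightening rather than a gap.
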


\section{Mathematical analysis} \label{sec:mathematical_analysis}
In this section, we provide the existence and uniqueness \revision{of the weak solution} to the model \eqref{System} using the Galerkin approximation approach.
\begin{definition} \label{Def:WeakSolution}
	We say that $(\phi, \mu, \vec{u}, \psi, \chi)$ satisfying
	\begin{equation*}
	    \begin{aligned}
	    \phi &\in \W^{\alpha}_{2,2}(0,T;\phi_0,\L_2(\Omega),\L_2(\Omega)),\quad &
		\mu &\in \L_2(0,T;\H^1(\Omega)),\\
		\vec{u}&\in \L_2(0,T;\revision{\H^1_{0,\Diru}(\Omega;\R^d)}),\quad&
		\psi,\chi &\in \W^{1}_{2,2}(0,T;\revision{\H^1_{0,\Dirpsi}(\Omega)},\H^{-1}(\Omega)),\\
	    \end{aligned}
	\end{equation*}
	is a weak solution to the system \eqref{System} with data \eqref{eq:data}, if the initial conditions $g_{1-\alpha}*(\phi-\phi_0)(0)=0,\ \psi(0)=\psi_0,\ \chi(0)=\chi_0$ holds in the weak sense and the solution satisfies the variational form
	\begin{equation}\label{eqn:variational_form}
	\begin{aligned}
		\left(\p_t^{\alpha}(\phi-\phi_0),\xi_1\right) &+ (M_\phi\nabla\mu,\nabla \xi_1)  =  N_\phi\big(f(\phi,\psi),\xi_1\big) - P_\phi\big(g(\phi,\chi),\xi_1\big),\\
		\big(\mu,\xi_2\big)&=c\big(\phi,\xi_2\big) + \lambda\big(\nabla\cdot \vec{u},\xi_2\big),\\
		2G\left(\symgrad(\vec{u}),\symgrad(\vec{\xi}_3)\right)&+\frac{2G\nu}{1-2\nu}\left(\nabla\cdot\vec{u},\nabla\cdot\vec{\xi}_3\right)=-\lambda\left(\phi,\nabla\cdot\vec{\xi}_3\right), \\
		\left<\partial_t \psi,\xi_4\right> &+ \big(M_\psi \nabla \psi,\nabla \xi_4\big)= \big(S_\psi,\xi_4\big) - N_\psi \big(f(\phi,\psi),\xi_4\big)+(\psi_b,\xi_4)_{\p\Omega\backslash\Dirpsi}, \\
		\left<\partial_t \chi,\xi_4\right> &+ \big(M_\chi \nabla \chi,\nabla \xi_4\big)=  \big(S_\chi,\xi_4\big)- N_\chi(\chi,\xi_4) - P_\chi \big(g(\phi,\chi),\xi_4\big) + (\chi_b,\xi_4)_{\p\Omega\backslash\Dirpsi}, \\
	\end{aligned}
	\end{equation}
	for all $\xi_1\in \H^1(\Omega), \xi_2\in\L_2(\Omega), \vec{\xi}_3\in \H^1_{0,\Diru}(\Omega;\R^d)$ and $\xi_4\in \H^1_{0,\Dirpsi}(\Omega)$.
	\end{definition}
\begin{theorem}[Well-posedness of global weak solutions]
 Let the following assumptions hold: 
\begin{enumerate}[wide,label=\textup{(A\arabic*)},labelwidth=!,labelindent=0pt] \itemsep0em
 \item \label{Ass:initial_boundary} $\phi_0,\psi_0, \chi_0 \in \L_2(\Omega)$, $\psi_b,\chi_b \in\L_2(0,T;\L_2(\p\Omega\backslash\Dirpsi))$,
 \item \label{Ass:f_g} $f,g \in \C_b(\R^2)$ such that $0\leq f\leq C_f$ and $0\leq g\leq C_g$, for positive constants $C_f,C_g$,
 \item \label{Ass:mobility} $M_\phi,M_\psi,M_\chi \in \C_b(\Omega)$ such that $M_0\leq M_\phi(\vec{x}),M_\psi(\vec{x}),M_\chi(\vec{x}) \leq M_\infty$ for positive constants $M_0$, $M_\infty$,
 \item \label{Ass:source} $S_\psi,S_\chi \in \L_2(0,T;\L_2(\Omega))$,
  \item \label{Ass:paramters} $c>\frac{\lambda^2(1-2\nu)}{2G\nu}$,
\end{enumerate}
then there exists a weak solution $(\phi,\mu,\vec{u},\psi,\chi)$ in the sense of Definition \ref{Def:WeakSolution}. Additionally, the solution satisfies the estimate 
\begin{equation}\label{est:final}
\begin{aligned}\begin{multlined}[t]
\|\phi\|_{\L_2(0,T;\L_2(\Omega))}^2+\|\mu\|^2_{\L_2(0,T;\H^1(\Omega))}+\|\vec{u}\|_{\L_2(0,T;\H^1(\Omega;\R^d))}^2+\| \psi\|_{\L_2(0,T;\H^1(\Omega))}^2+\| \chi\|_{\L_2(0,T;\H^1(\Omega))}^2\end{multlined}\\
\leq \begin{multlined}[t]
 C  \big(\textup{IC}+C_f+C_g+\|S_{\psi}\|_{\L_2(0,T;\L_2(\Omega))}^2+\|S_{\chi}\|_{\L_2(0,T;\L_2(\Omega))}^2
 +\|\psi_b\|_{\L_2(0,T;\L_2(\p\Omega\backslash\Dirpsi))}^2\\+\|\chi_b\|_{\L_2(0,T;\L_2(\p\Omega\backslash\Dirpsi))}^2\big),
\end{multlined}
\end{aligned}
\end{equation}
where $\textup{IC}=\|\phi_0\|^2+\|\psi_0\|^2+\|\chi_0\|^2$. Furthermore, the solution is unique if 
the nonlinear functions $f,g$ are Lipschitz continuous with Lipschitz constants $L_f,L_g>0$, respectively.
\end{theorem}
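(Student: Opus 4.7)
The plan is the standard Faedo--Galerkin construction, the novelty being the interplay between the fractional derivative on $\phi$ and the linear elastic coupling. I would take orthonormal eigenbases $\{w_j\}\subset\H^1(\Omega)$, $\{\vec{v}_j\}\subset\H^1_{0,\Diru}(\Omega;\R^d)$ and $\{z_j\}\subset\H^1_{0,\Dirpsi}(\Omega)$ (Neumann Laplacian, Lam\'e operator and mixed-Dirichlet Laplacian) and seek $(\phi_n,\mu_n,\vec{u}_n,\psi_n,\chi_n)$ in the corresponding $n$-dimensional subspaces satisfying the discrete version of~\eqref{eqn:variational_form}. Since $\vec{u}_n$ is determined linearly and continuously by $\phi_n$ through the discrete Lam\'e system (invertible by Korn's inequality~\eqref{inequality:Poincare_Korn_Sobolev} and $|\Diru|>0$) and $\mu_n$ is algebraic, the problem reduces to a coupled Volterra--ODE system for $(\phi_n,\psi_n,\chi_n)$, for which local existence follows from a Banach fixed-point argument using~\ref{Ass:f_g}; global existence then results from the uniform a priori bounds.

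The heart of the argument is the coupled $\phi$--$\vec{u}$ estimate, which I expect to be the main obstacle. Testing the discrete $\phi$ equation with $\mu_n$ yields
\begin{equation*}
\bigl(\partial_t^{\alpha}(\phi_n-\phi_{0,n}),\mu_n\bigr)+(M_\phi\nabla\mu_n,\nabla\mu_n)=N_\phi(f,\mu_n)-P_\phi(g,\mu_n).
\end{equation*}
Substituting $\mu_n=c\phi_n+\lambda\nabla\cdot\vec{u}_n$ produces the diagonal term $c(\partial_t^{\alpha}(\phi_n-\phi_{0,n}),\phi_n)$, controlled from below by Lemma~\ref{Lem_basic_inequality_fractional_chain_rule} with $\Func(\varphi)=\tfrac{1}{2}\varphi^2$, together with the cross term $\lambda(\partial_t^{\alpha}(\phi_n-\phi_{0,n}),\nabla\cdot\vec{u}_n)$. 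To treat the latter I would apply $\partial_t^{\alpha}$ to the displacement equation (legitimate because it is time-algebraic and linear in $\phi_n$) and test with $\vec{u}_n$, so that this cross term equals $-2G(\symgrad(\partial_t^{\alpha}\vec{u}_n),\symgrad(\vec{u}_n))-\tfrac{2G\nu}{1-2\nu}(\partial_t^{\alpha}\nabla\cdot\vec{u}_n,\nabla\cdot\vec{u}_n)$; Lemma~\ref{Lem_basic_inequality_fractional_chain_rule} applied to the convex quadratic energies again bounds each piece below by the fractional derivative of a nonnegative functional plus a singular remainder. Combining everything, the resulting quadratic form in $(\phi_n,\nabla\cdot\vec{u}_n)$ is coercive precisely under Assumption~\ref{Ass:paramters}, which is the role of that hypothesis. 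The right-hand side is controlled by~\ref{Ass:f_g} and~\eqref{inequality:Youngs}, and the mean of $\mu_n$ is recovered by testing the $\mu$--relation with the constant function. Convolving the resulting fractional differential inequality with $g_\alpha$ and invoking the fractional Gronwall Lemma~\ref{Lem_Gronwall_fractional} yields uniform bounds on $\phi_n$, on $\nabla\mu_n$ and, via Korn, on $\vec{u}_n$. The parabolic subsystem is estimated in a standard manner: test with $\psi_n$ (resp.\ $\chi_n$), absorb the boundary contribution via~\eqref{inequality:Youngs} and the trace inequality, and apply the classical Gronwall Lemma~\ref{Lem_Gronwall} using~\ref{Ass:source}.

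For the passage to the limit, the fractional Aubin--Lions embedding~\eqref{Lem_FractionalAubin} gives $\phi_n\to\phi$ strongly in $\L_2(0,T;\L_2(\Omega))$ and~\eqref{Lem_Aubin} provides strong convergence of $(\psi_n,\chi_n)$; continuous dependence for the Lam\'e system then yields convergence of $\vec{u}_n$, while weak convergence suffices for $\mu_n$ and the fractional derivative term is identified using Lemma~\ref{Lem_frac_integration_by_parts}. Continuity and uniform boundedness of $f,g$ combined with dominated convergence handle the nonlinearities, the initial condition is recovered from the continuous embedding~\eqref{Lem_ContinousEmbedding_fractional}, and~\eqref{est:final} follows by weak lower semicontinuity in the uniform bounds. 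For uniqueness, I would subtract two solutions sharing the same data and repeat the coupled energy argument above: the reaction differences, by the Lipschitz hypothesis, are bounded by $C(\|\phi^1-\phi^2\|+\|\psi^1-\psi^2\|+\|\chi^1-\chi^2\|)$, so Lemma~\ref{Lem_Gronwall_fractional} applied to the $\phi$--equation together with Lemma~\ref{Lem_Gronwall} applied to the $(\psi,\chi)$--equations forces the differences, and hence also $\mu$ and $\vec{u}$, to vanish.
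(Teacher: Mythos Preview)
Your overall strategy matches the paper's (Faedo--Galerkin, fractional Gronwall, fractional Aubin--Lions, Lipschitz uniqueness), but the handling of the crucial $\phi$--$\vec{u}$ coupled estimate has a genuine gap. After you substitute $\mu_n=c\phi_n+\lambda\nabla\cdot\vec{u}_n$ and rewrite the cross term via the time-differentiated displacement equation tested with $\vec{u}_n$, you obtain
\[
c\bigl(\p_t^{\alpha}\phi_n,\phi_n\bigr)\;-\;2G\bigl(\p_t^{\alpha}\symgrad(\vec{u}_n),\symgrad(\vec{u}_n)\bigr)\;-\;\tfrac{2G\nu}{1-2\nu}\bigl(\p_t^{\alpha}\nabla\cdot\vec{u}_n,\nabla\cdot\vec{u}_n\bigr)+M_0\|\nabla\mu_n\|^2\leq\text{RHS}.
\]
You then claim that Lemma~\ref{Lem_basic_inequality_fractional_chain_rule} bounds each piece from below. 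But Lemma~\ref{Lem_basic_inequality_fractional_chain_rule} (and its special case~\eqref{basic_inequality_fractional}) only gives the one-sided inequality $(H'(\varphi),\p_t^{\alpha}\varphi)\geq \p_t(g_{1-\alpha}*\!\int H)+\cdots$ for \emph{convex} $H$. For the elastic terms, which carry \emph{negative} coefficients, this delivers an \emph{upper} bound, not a lower one; equivalently, the functional $\tilde H(\phi,\symgrad)=\tfrac{c}{2}\phi^2-G\|\symgrad\|^2-\tfrac{G\nu}{1-2\nu}(\nabla\cdot\vec{u})^2$ implicit in your expression is not convex, so the lemma does not apply in the direction you need. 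There is no fractional chain rule to fall back on, so the inequality simply fails at this step.

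The paper circumvents this by the opposite manoeuvre: instead of differentiating the displacement equation and testing with $\vec{u}_n$, it tests the \emph{undifferentiated} displacement equation with $\p_t^{\alpha}(\vec{u}_n-\vec{u}_0^n)$. This produces the complementary cross term $\lambda(\phi_n,\p_t^{\alpha}\nabla\cdot\vec{u}_n)$ together with elastic terms carrying the \emph{correct} sign, so that the full left-hand side is exactly $(H'(\varphi),\p_t^{\alpha}\varphi)$ for $\varphi=(\phi_n,\symgrad(\vec{u}_n))$ and $H(\phi,\symgrad)=\tfrac{c}{2}\phi^2+W(\phi,\symgrad)$. Joint convexity of this $H$ is precisely equivalent to~\ref{Ass:paramters}, and Lemma~\ref{Lem_basic_inequality_fractional_chain_rule} then applies to the combined functional in the right direction. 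After convolving with $g_\alpha$ one computes $\int_\Omega W=-G\|\symgrad(\vec{u}_n)\|^2-\tfrac{G\nu}{1-2\nu}\|\nabla\cdot\vec{u}_n\|^2$ on-shell, bounds these by $\|\phi_n\|^2$ via the elastic estimate, and arrives at the coercive inequality. The remaining parts of your outline (parabolic estimates, compactness, limit passage, uniqueness) are essentially as in the paper.
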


\noindent \textit{Proof.} \quad
	In order to prove the existence of weak solution, we first use the Faedo-Galerkin method and semi-discretise the original problem in space in Section \ref{subsec:faedo_galerkin}.
	The discretised model can be formulated as a system of nonlinear mixed-order fractional differential equations in a finite dimensional space whose existence of a solution is then obtained by fixed point theorem in Appendix \ref{appendix:existence_finitedim}.
	We obtain the required energy estimates in Section \ref{subsec:energy_estimates}. In Section \ref{subsec:existence_weak_solution}, we deduce from the Banach--Alaoglu theorem and compactness theorems, \revision{the existence of limit functions that yield a weak solution} to the nonlinear system \eqref{System} in the sense of Definition \ref{Def:WeakSolution} and show the weak solution satisfies the estimate \eqref{est:final}.
	Finally, we show in Section \ref{subsec:uniqueness} that the Lipschitz continuity assumption on the nonlinear functions $f$ and $g$ gives uniqueness of the solution.

\subsection{Faedo--Galerkin approximation}\label{subsec:faedo_galerkin}
We first choose discrete spaces $\PY^m$, $\PZ^m$ and $\PW^m$ such that their unions over $m\in\mathbb{N}$ are dense in $\H^1(\Omega)$, $\H^1_{0,\Dirpsi}(\Omega)$ and $\H^1_{0,\Diru}(\Omega;\R^d)$, respectively. We construct approximate solutions in these discrete spaces. We see that the semi-discretised model of Problem \eqref{eqn:variational_form} can be formulated as a system of multi-order fractional ordinary differential equations.\\ 

\noindent \textbf{Discrete spaces.} 
We introduce the discrete spaces
\begin{equation*}
    \PY^m=\mathrm{span}\{y_1,\ldots,y_m\},\quad  \PZ^m= \mathrm{span}\{z_1,\ldots,z_m\},\quad
    \PW^m= \mathrm{span}\{\vec{w}^1,\ldots,\vec{w}^m\},
\end{equation*}
where $y_k,z_k:\Omega\rightarrow \R$, $\vec{w}_k:\Omega\rightarrow \R^d$  for $k=1,\ldots,m$ are eigenfunctions to the eigenvalues $\lambda_k^y,\lambda_k^z,\lambda_k^{\vec{w}}$ of the following respective problems
 $$ \hspace{-2.2cm}  \begin{multlined}[t]
    \begin{aligned}
     -\Delta y_k&=\lambda_k^y y_k &\text{in $\Omega$}, \\ 	
     \nabla y_k\cdot \vec{n}&=0 &\mbox{on $\p\Omega$},
    \end{aligned}
    \end{multlined} 
    \hspace{-2.2cm}
    \begin{multlined}[t]
    \begin{aligned}
     -\Delta z_k&=\lambda_k^z z_k &&\mbox{in $\Omega$}, \\ 	
     z_k&=0 &&\mbox{on $\p\Dirpsi$},
     \\ 	
     \nabla z_k\cdot \vec{n}&=0 &&\mbox{on $\p\Omega \backslash \p\Dirpsi$},
     \end{aligned}
    \end{multlined} 
    \hspace{-3.2cm}
    \begin{multlined}[t]
    \begin{aligned}
     -\Delta \vec{w}_k&=\lambda_k^{\vec{w}} \vec{w}_k &&\mbox{in $\Omega$}, \\	\vec{w}_k&=0 &&\mbox{on $\p\Diru$},
     \\ 	
     \nabla \vec{w}_k\cdot \vec{n}&=0 &&\mbox{on $\p \Omega \backslash \p\Diru$}.
    \end{aligned}
    \end{multlined} $$
Since the Laplace operator is a compact, self-adjoint, injective operator, we conclude by the spectral theorem \cite{boyer2013navier, robinson2001infinite, brezis2010functional}, that 
\begin{center}
     $\{y_k\}_{k=1}^{\infty}$, $\{z_k\}_{k=1}^{\infty}$ are orthonormal bases in $\L_2(\Omega)$ and orthogonal bases in $\H^1(\Omega)$,\\
    $\{\vec{w}_k\}_{k=1}^{\infty}$ is an orthonormal basis of $\L_{2}(\Omega;\R^d)$ and orthogonal basis in $\H^1(\Omega;\R^d)$. 
\end{center}
Exploiting the orthonormality of the eigenfunctions, we deduce that $\PY^m$, $\PZ^m$ are dense in $\L_2(\Omega)$, and $\PW^m$ is dense in $\L_2(\Omega;\R^d)$. We introduce the orthogonal projection, $\mathrm{\Pi}_{\PY^m}:\L_2(\Omega) \to \PY^m$, which can be written as 
\begin{equation*}
    \Pi_{\PY^m} \varphi =\sum_{k=0}^m(\varphi,y_k)y_k,
\end{equation*}
and by the properties of orthogonal projections, we have $\|\Pi_{\PY^m} \varphi\| \leq \|\varphi\|$.
Analogously, we can define $\Pi_{\PZ^m}$ and $\Pi_{\PW^m}$. \\

\noindent\textbf{Faedo--Galerkin system:} Fix $m>0$ and consider the Faedo--Galerkin approximations $\phi^m,\mu^m:[0,T]\rightarrow \PY^m$, $\vec{u}^m:[0,T]\rightarrow \PW^m$ and $\psi^m,\chi^m:[0,T]\rightarrow \PZ^m$ with the representations
\begin{equation}\label{approx} 
\begin{aligned}
\phi^m(t):=\sum_{k=1}^m \Cphi^m_k(t)y_k,\ \  \mu^m(t):= \sum_{k=1}^m \Cmu^m_k(t)y_k,\ \ \vec{u}^m(t):=\sum_{k=1}^m \Cu^m_k(t)\vec{w}_k, \\
\psi^m(t):=\sum_{k=1}^m \Cpsi^m_k(t)z_k,\ \ \chi^m(t):=\sum_{k=1}^m \Cchi^m_k(t)z_k,\ \   
\end{aligned}
\end{equation}
where $\Cphi^m_k, \Cmu_k^m, \Cu_k^m, \Cpsi_k^m, \Cchi_k^m:(0,T)\rightarrow\R$ are coefficient functions for $k=1,\ldots,m$. To simplify notations, we set
\begin{equation*}
\begin{aligned}
f^m=f(\phi^m,\psi^m),\ g^m=g(\phi^m,\chi^m),\ \psi_b^m=\Pi_{\PZ^m}\psi_b,\ \chi_b^m=\Pi_{\PZ^m}\chi_b,\\
\phi_0^m=\Pi_{\PY^m}\phi_0,\ \psi^m_0=\Pi_{\PZ^m} \psi_0,\ \chi^m_0=\Pi_{\PZ^m} \chi_0. 
\end{aligned}
\end{equation*}
The Faedo--Galerkin system of the model reads
\begin{subequations} \label{Faedo--Galerkin}
 \begin{align}
\left(\p_t^{\alpha}(\phi^m-\phi_0^m),y_k\right) &+ (M_\phi\nabla\mu^m,\nabla y_k)  =  N_\phi\big(f^m,y_k\big)- P_\phi\big(g^m,y_k\big), \label{Faedo--Galerkin:phi}\\
(\mu^m,y_k) &=c(\phi^m,y_k) + \lambda(\nabla\cdot\vec{u}^m,y_k),\label{Faedo--Galerkin:mu}\\
2G\big(\symgrad(\vec{u}^m),\symgrad(\vec{w}_k)\big)&+ \frac{2G\nu}{1-2\nu}\big(\nabla\cdot \vec{u}^m ,\nabla \cdot \vec{w}_k\big) =- \lambda\big(\phi^m,\nabla\cdot\vec{w}_k\big), \label{Faedo--Galerkin:u}\\
\left(\partial_t \psi^m,z_k\right) &+ \big(M_\psi \nabla \psi^m,\nabla z_k\big)=  \big(S_\psi,z_k\big) - N_\psi \big(f^m,z_k\big) + (\psi_b^m,z_k)_{\p\Omega\backslash\Dirpsi}, \label{Faedo--Galerkin:psi} \\
\left(\partial_t \chi^m,z_k\right) &+ \big(M_\chi \nabla \chi^m,\nabla z_k\big)=  \big(S_\chi,z_k\big) - N_\chi(\chi^m,z_k) - P_\chi \big(g^m,z_k\big)+ (\chi_b^m,z_k)_{\p\Omega\backslash\Dirpsi},  \label{Faedo--Galerkin:chi}
\end{align}
\end{subequations}
for all $k=1,\ldots,m$, along with the initial conditions
\begin{align*}
	g_{1-\alpha}*(\phi^m-\phi_0^m)(0)&=0, \ \ \ \psi^m(0)= \psi_0^m, \ \ \ \chi^m(0)= \chi_0^m.
\end{align*}
After inserting the Galerkin ansatz functions \eqref{approx} into the system \eqref{Faedo--Galerkin} 
and introducing the following notations
\begin{equation*}
\begin{alignedat}{5}
(\vec{A}_\mu^m)_{kl} &:=(M_\phi\nabla y_l,\nabla y_k), \quad &&(\vec{A}_\psi^m)_{kl} &&:=(M_\psi\nabla z_l,\nabla z_k), \quad &&(\vec{A}_\chi^m)_{kl} &&:=(M_\chi\nabla z_l,\nabla z_k), \\ 
(\vec{A}_{\vec{u}}^m)_{kl} &:=(\symgrad(\vec{w}_l),\symgrad(\vec{w}_k)),\quad &&(\vec{B}^m)_{kl} &&:=(\nabla\cdot \vec{w}_l,\nabla\cdot \vec{w}_k), \quad &&(\vec{C}^m)_{kl} &&:=(y_l,\nabla \cdot \vec{w}_k), \\ 
\vec{\Cphi}^m(t) &:=(\Cphi^m_1,\ldots,\Cphi^m_m)^T, \quad  &&\vec{\Cmu}^m(t)&&:=(\Cmu^m_1,\ldots,\Cmu^m_m)^T, \quad &&\vec{\Cu}^m(t) &&:=(\Cu^m_1,\ldots,\Cu^m_m)^T,\\  
\vec{\Cpsi}^m(t) &:=(\Cpsi^m_1,\ldots,\Cpsi^m_m)^T, \quad  &&\vec{\Cchi}^m(t) &&:=(\Cchi^m_1,\ldots,\Cchi^m_m)^T, \quad && \vec{\Cphi}^m_0 &&:=((\phi_0,y_1),\ldots,(\phi_0,y_m))^T,
\end{alignedat}
\end{equation*}
\begin{equation*}
\begin{alignedat}{3}
\vec{S}_\psi^m(t)&:=((S_\psi,z_1),\ldots,(S_\psi,z_m))^T, 
\quad &&\vec{S}_\chi^m(t) &&:=((S_\chi,z_1),\ldots,(S_\chi,z_m))^T, \\
\vec{\psi}_b^m &:=((\psi_b^m,z_1),\ldots,(\psi_b^m,z_m))^T, 
\quad &&\vec{\chi}_b^m(t)&&:=((\chi_b^m,z_1),\ldots,(\chi_b^m,z_m))^T,\\
 \vec{f}_y^m(t)&:=((f^m,y_1),\ldots,(f^m,y_m))^T, \quad &&\vec{g}_y^m(t)&&:=((g^m,y_1),\ldots,(g^m,y_m))^T,\\
 \vec{f}_z^m(t)&:=((f^m,z_1),\ldots,(f^m,z_m))^T, \quad &&\vec{g}_z^m(t)&&:=((g^m,z_1),\ldots,(g^m,z_m))^T,
\end{alignedat}
\end{equation*}
we obtain a more compact form of the Faedo--Galerkin system
\begin{subequations}\label{ODE}
    \begin{align}
	\dt\left(g_{1-\alpha}*(\vec{\Cphi}^m(t)-\vec{\Cphi}^m_0)\right)(t)+ \vec{A}_\mu^m \vec{\Cmu}^m(t)&= N_\phi\vec{f}_y^m(t) - P_\phi \vec{g}_y^m(t),\label{ODE:phi}\\
	\vec{\Cmu}^m(t) &= c\vec{\Cphi}^m(t) +\lambda(\vec{C}^m)^t\vec{\Cu}^m(t),\label{ODE:mu}\\
	\big( 2G \vec{A}_{\vec{u}}^m + \tfrac{2G\nu}{1-2\nu} \vec{B}^m\big)\vec{\Cu}^m(t) &= -\lambda \vec{C}^m \vec{\Cphi}^m(t),\label{ODE:u}\\
	\dt \vec{\Cpsi}^m(t) + \vec{A}_\psi^m \vec{\Cpsi}^m(t) &=\vec{S}_\psi^m(t)-N_\psi\vec{f}_z^m(t)+\vec{\psi}_b^m, \label{ODE:psi}\\
	\dt \vec{\Cchi}^m(t) + \vec{A}_\chi^m \vec{\Cchi}^m(t) &= \vec{S}_\chi^m(t) -N_\chi\vec{\Cchi}^m(t) -P_\chi\vec{g}_z^m(t)+\vec{\chi}_b^m. \label{ODE:chi}
\end{align}
\end{subequations}
The matrix $\vec{F}^m := 2G \vec{A}_{\vec{u}}^m + \tfrac{2G\nu}{1-2\nu} \vec{B}^m$ is positive definite by Korn\rq s inequality \eqref{inequality:Poincare_Korn_Sobolev}
and hence, it is invertible. From \eqref{ODE:mu} and \eqref{ODE:u}, we can write $\vec{\Cmu}^m(t),\vec{\Cu}^m(t)$ in terms of $\vec{\Cphi}^m(t)$.
\begin{subequations}
    \begin{align}
       \vec{\Cmu}^m(t) &= (c\mathbb{I}-\lambda^2(\vec{C}^m)^t(\vec{F}^m)^{-1} \vec{C}^m ) \vec{\Cphi}^m(t),\label{ODE_Soln:mu} \\
	\vec{\Cu}^m(t) &= -\lambda (\vec{F}^m)^{-1} \vec{C}^m \vec{\Cphi}^m(t).\label{ODE_Soln:u}
    \end{align}
\end{subequations}
Then we obtain a system of nonlinear multi-order fractional differential equations in the $3m$ unknowns $\{\Cphi_k,\Cpsi_k,\Cchi_k\}_{1\leq k\leq m}$ 
\begin{equation*}
\begin{aligned}
	\dt\left(g_{1-\alpha}*(\vec{\Cphi}^m(t)-\vec{\Cphi}^m_0)\right)(t)+\vec{A}_\mu^m (c\mathbb{I}-\lambda^2(\vec{C}^m)^t(\vec{F}^m)^{-1} \vec{C}^m ) \vec{\Cphi}^m(t)&= N_\phi\vec{f}_y^m(t) - P_\phi \vec{g}_y^m(t),\\
	\dt\vec{\Cpsi}^m(t) + \vec{A}_\psi^m \vec{\Cpsi}^m(t) &= \vec{S}^m_\psi(t)-N_\psi\vec{f}^m(t)+\vec{\psi}_b^m,\\
	\dt \vec{\Cchi}^m(t) + \vec{A}_\chi^m \vec{\Cchi}^m(t) &= \vec{S}_\chi^m(t) -N_\chi\vec{\Cchi}^m(t) -P_\chi\vec{g}^m(t)+\vec{\chi}^m_b,
	\end{aligned}
\end{equation*}
along with the initial conditions, for $k=1,\ldots,m$, 
\begin{align*}
	\left(g_{1-\alpha}*((\vec{\Cphi}^m)_k-(\phi_0,y_k))\right)(0) =0, \ \ \ (\vec{\Cpsi}^m)_k(0) =(\psi_0,z_k), \ \ \ (\vec{\Cchi}^m)_k(0) =(\chi_0,z_k).
\end{align*}
The theory of ordinary fractional differential equations in Appendix \ref{appendix:existence_finitedim} ensures the existence of solution to the nonlinear multi-order fractional differential system, and we obtain 
\begin{center}
	$\vec{\Cphi}^m,\vec{\Cmu}^m,\vec{\Cu}^m\in \W^{\alpha}_{2,2}(0,T;\vec{\Cphi}^m_0,\R^m, \R^m),\quad \vec{\Cpsi}^m,\vec{\Cchi}^m\in \W^1_{2,2}(0,T;\R^m,\R^m)$.
\end{center} 
 We further see from \eqref{ODE_Soln:mu} and \eqref{ODE_Soln:u} that
\begin{equation*}
    \begin{aligned}
     \Cmu^m_k(t)- \sum_{l=1}^m \left(c\mathbb{I}-\lambda^2(\vec{C}^m)^t(\vec{F}^m)^{-1} \vec{C}^m \right)_{kl} (\phi_0,y_l) &= \sum_{l=1}^m \left(c\mathbb{I}-\lambda^2(\vec{C}^m)^t(\vec{F}^m)^{-1} \vec{C}^m \right)_{kl} \left(\Cphi^m_l(t)-(\phi_0,y_l)\right),\\
     \Cu^m_k(t)+\lambda \sum_{l=1}^{m} \left((\vec{F}^m)^{-1} \vec{C}^m\right)_{kl} (\phi_0,y_l) &= -\lambda \sum_{l=1}^{m} \left((\vec{F}^m)^{-1}\vec{C}^m \right)_{kl}\left( \Cphi^m_l(t)-(\phi_0,y_l)\right).
    \end{aligned}
\end{equation*}
Taking convolution with $g_{1-\alpha}$ on both sides of the above two equations, multiplying by $y_k$ and $\vec{w}_k$ respectively, and taking summation over $k=1$ to $m$, we have 
\begin{equation*}
  \left(g_{1-\alpha}*\left(\mu^m-\mu_0^m\right)\right)(0)=0,\quad \left(g_{1-\alpha}*(\vec{u}^m-\vec{u}^m_0)\right)(0)=0,
\end{equation*}
where $\mu_0^m= \sum_{k,l=1}^m \left(c\mathbb{I}-\lambda^2(\vec{C}^m)^t(\vec{F}^m)^{-1} \vec{C}^m \right)_{kl} (\phi_0,y_l)y_k$ and $\vec{u}^m_0=-\lambda\sum_{l,k=1}^{m} \left((\vec{F}^m)^{-1} \vec{C}^m\right)_{kl}(\phi_0,y_l)\vec{w}_k$ and they satisfy
\begin{subequations}
 \begin{align}
    (\mu_0^m,y_k)&=c(\phi_0^m,y_k)+\lambda (\nabla\cdot\vec{u}_0^m,y_k),\label{Faedo--Galerkin:mu_0}\\
    2G\big(\symgrad(\vec{u}_0^m),\symgrad(\vec{w}_k)\big)&+ \frac{2G\nu}{1-2\nu}\big(\nabla\cdot \vec{u}_0^m ,\nabla \cdot \vec{w}_k\big) =- \lambda\big(\phi_0^m,\nabla\cdot\vec{w}_k\big).\label{Faedo--Galerkin:u_0}
    \end{align}
\end{subequations} 
Therefore, we conclude
\begin{align*}
	\phi^m\in \W^{\alpha}_{2,2}(0,T;\phi_0^m,\PY^m, \PY^m),\quad \mu^m \in \W^{\alpha}_{2,2}(0,T;\mu_0^m,\PY^m, \PY^m),\\ \vec{u}^m \in \W^{\alpha}_{2,2}(0,T;\vec{u}^m_0,\PW^m, \PW^m), \quad \psi^m,\chi^m\in \W^1_{2,2}(0,T,\PY^m,\PY^m).
\end{align*}
We obtain from \eqref{Faedo--Galerkin:mu} (and \eqref{Faedo--Galerkin:u}) and \eqref{Faedo--Galerkin:mu_0} (and \eqref{Faedo--Galerkin:u_0}) that $\mu^m$ (and $\vec{u}^m$) satisfy the following equations
\begin{subequations}
    \begin{align}
 \left(\FDmuM,y_k\right) &= c\left(\FDphiM,y_k\right) + \lambda \left(\FDdivuM,y_k\right), \label{Faedo--Galerkin:mu_derivative}   \\
 2G\left(\FDsymuM,\symgrad(\vec{w}_k)\right) &+ \frac{2G\nu}{1-2\nu}\left(\FDdivuM,\nabla \cdot \vec{w}_k\right)
 =- \lambda\left(\FDphiM,\nabla\cdot\vec{w}_k\right).\label{Faedo--Galerkin:u_derivative}
    \end{align}
\end{subequations}
Further we also get from the way discrete spaces are defined the following equation
\begin{equation*}
 \begin{aligned}
\frac{-1}{\lambda_k^y}(\mu_0^m,\Delta y_k)=(\mu_0^m,y_k)&=c(\phi_0^m, y_k)+\lambda (\nabla\cdot\vec{u}_0^m, y_k),
\end{aligned}   
\end{equation*}
and taking integration by parts we get 
\begin{equation}
 \begin{aligned}
(\nabla\mu_0^m,\nabla y_k)= \lambda_k^y c(\phi_0^m, y_k)+\lambda_k^y \lambda (\nabla\cdot\vec{u}_0^m, y_k).\label{Faedo--Galerkin:nabla_mu_0}
\end{aligned}   
\end{equation}

\subsection{Energy estimates}\label{subsec:energy_estimates}

\noindent \textbf{Estimates for $\mu^m$}.
Multiplying \eqref{Faedo--Galerkin:mu} with $\Cmu^m_k(t)$ and summing from $k=1$ to $m$, we have, using H\"{o}lder's inequality \eqref{inequality:Holder},
\begin{equation*}
\|\mu^m\|^2 =c(\phi^m,\mu^m) + \lambda(\nabla\cdot\vec{u}^m,\mu^m) \leq  \left(c\|\phi^m\|+\lambda\|\nabla\cdot\vec{u}^m\| \right)\|\mu^m\|,
\end{equation*}
and thus
\begin{equation}\label{est:mu}
\|\mu^m\| \leq  c\|\phi^m\|+\lambda\|\nabla\cdot\vec{u}^m\|\leq c\|\phi^m\|+\lambda\|\vec{u}^m\|_{\H^1(\Omega;\R^d)}. 
\end{equation}
Multiplying \eqref{Faedo--Galerkin:mu_0} with $(\mu_0^m,y_k)$, summing from $k=1$ to $m$ and using H\"{o}lder's inequality \eqref{inequality:Holder}, we have the estimate
\begin{equation}
 \|\mu_0^m\|\leq c\|\phi^m_0\|+\lambda\|\nabla\cdot\vec{u}^m_0\|\leq c\|\phi^m_0\|+\lambda\|\vec{u}^m_0\|_{\H^1(\Omega;\R^d)}.\label{est:mu_0}  
\end{equation}

\medskip
\noindent \textbf{Estimates for $\vec{u}^m$}. 
Multiplying \eqref{Faedo--Galerkin:u} with $\Cu^m_k(t)$, and summing from $k=1$ to $m$, we have 
\begin{equation*}
    \begin{aligned}
2G\|\symgrad(\vec{u}^m)\|^2+ \frac{2G\nu}{1-2\nu}\|\nabla\cdot \vec{u}^m \|^2 &=- \lambda\big(\phi^m,\nabla\cdot\vec{u}^m\big).
    \end{aligned}
\end{equation*}
Using $\epsilon$-Young's \eqref{inequality:Youngs} and Korn's inequality \eqref{inequality:Poincare_Korn_Sobolev}, we have 
\begin{equation}\label{est:u}
    \begin{aligned}
C\|\vec{u}^m\|^2_{\H^1(\Omega;\R^d)}\leq2G\|\symgrad(\vec{u}^m)\|^2+ \frac{G\nu}{1-2\nu}\|\nabla\cdot \vec{u}^m \|^2 &\leq \frac{\lambda^2(1-2\nu)}{4G\nu}\|\phi^m\|^2.
    \end{aligned}
\end{equation}
Multiplying \eqref{Faedo--Galerkin:u_0} with $(\vec{u}_0^m,\vec{w}_k)$, and summing from $k=1$ to $m$, we estimate as before to get
\begin{equation}\label{est:u_0}
    \begin{aligned}
C\|\vec{u}^m_0\|^2_{\H^1(\Omega;\R^d)}\leq 2G\|\symgrad(\vec{u}_0^m)\|^2+ \frac{G\nu}{1-2\nu}\|\nabla\cdot \vec{u}_0^m\|^2&\leq \frac{\lambda^2(1-2\nu)}{4G\nu}\|\phi^m_0\|^2.
    \end{aligned}
\end{equation}

\medskip
\noindent \textbf{Estimates for $\phi^m$}. Multiplying \eqref{Faedo--Galerkin:phi} with $\Cmu^m_k(t)$, \eqref{Faedo--Galerkin:mu} with $-\dt \left(g_{1-\alpha}*(\Cphi^m_k-(\phi_0,y_k))\right)(t)$, \eqref{Faedo--Galerkin:u} with $\dt\left( g_{1-\alpha}*\left(\Cu^m_k-(\vec{u}_0^m,\vec{w}_k)\right)\right)(t)+\Cu^m_k(t)$, and summing from $k=1$ to $m$, we have
\begin{subequations} \label{est:initial}
 \begin{align}
\left(\p_t^{\alpha}(\phi^m-\phi_0^m),\mu^m\right) &+ (M_\phi\nabla\mu^m,\nabla \mu^m)  =  N_\phi\big(f^m,\mu^m\big)- P_\phi\big(g^m,\mu^m\big),\label{est:initial:phi}\\
-(\mu^m,\p_t^{\alpha}(\phi^m-\phi_0^m)) &=-c(\phi^m,\p_t^{\alpha}(\phi^m-\phi_0^m)) - \lambda(\nabla\cdot\vec{u}^m,\p_t^{\alpha}(\phi^m-\phi_0^m)),\label{est:initial:mu}\\
2G\big(\symgrad(\vec{u}^m),\p_t^{\alpha}\left(\symgrad(\vec{u}^m)-\symgrad(\vec{u}^m_0)\right)\big)&+ \frac{2G\nu}{1-2\nu}\big(\nabla\cdot \vec{u}^m ,\p_t^{\alpha}\left(\nabla\cdot\vec{u}^m-\nabla\cdot\vec{u}^m_0\right)\big) =- \lambda\big(\phi^m,\p_t^{\alpha}\left(\nabla\cdot\vec{u}^m-\nabla\cdot\vec{u}^m_0\right)\big),\label{est:initial:u}\\
2G\big(\symgrad(\vec{u}^m),\symgrad(\vec{u}^m)\big)&+ \frac{2G\nu}{1-2\nu}\big(\nabla\cdot \vec{u}^m ,\nabla\cdot\vec{u}^m\big) =- \lambda\big(\phi^m,\nabla\cdot\vec{u}^m\big).\label{est:initial:u_1}
\end{align}
\end{subequations}
Upon adding \eqref{est:initial:phi}-\eqref{est:initial:u}, we obtain
\begin{equation*}
    \begin{aligned}
  c(\phi^m,\p_t^{\alpha}(\phi^m-\phi_0^m)) + (\p_{\phi}W(\phi^m,\symgrad(\vec{u}^m)),\p_t^{\alpha}(\phi^m-\phi_0^m))+ \big(\p_{\symgrad}W(\phi^m,\symgrad(\vec{u}^m)),\p_t^{\alpha}\left(\symgrad(\vec{u}^m)-\symgrad(\vec{u}^m_0)\right)\big)\\ + (M_\phi\nabla\mu^m,\nabla \mu^m) 
= N_\phi\big(f^m,\mu^m\big)- P_\phi\big(g^m,\mu^m\big),
    \end{aligned}
\end{equation*}
where we have, from Section \ref{sec:mathematical_modelling},
\begin{equation*}
\begin{alignedat}{10}
  W(\phi,\symgrad) &= \frac{1}{2} \symgrad:\vec{C}\symgrad + \symgrad:\lambda\phi\mathbb{I},\quad&&
\vec{C}\symgrad=2G\symgrad+\frac{2G\nu}{1-2\nu} \tr\symgrad \mathbb{I}, \\
\p_{\phi}W(\phi,\symgrad(\vec{u})) &= \symgrad(\vec{u}):\lambda\mathbb{I}=\lambda\nabla\cdot\vec{u}, \quad&&  \p_{\symgrad}W(\phi,\symgrad(\vec{u}))= \vec{C}\symgrad(\vec{u}) + \lambda\phi\mathbb{I}=2G\symgrad(\vec{u})+\frac{2G\nu}{1-2\nu} \nabla\cdot\vec{u}\mathbb{I}+\lambda\phi\mathbb{I}.
\end{alignedat}
\end{equation*}
We see that the convex functional $W(\phi^m,\symgrad(\vec{u}^m))$ satisfies the assumptions in Lemma \ref{Lem_basic_inequality_fractional_chain_rule} by noticing that 
\begin{equation}\label{form:int_W}
\int_\Omega W(\phi^m,\symgrad(\vec{u}^m))\dd \vec{x} = -G\|\symgrad(\vec{u}^m)\|^2-\frac{G\nu}{1-2\nu}\|\nabla\cdot\vec{u}^m\|^2,
\end{equation}
which is obtained using \eqref{est:initial:u_1}. We now apply Lemma \ref{Lem_basic_inequality_fractional_chain_rule} with $\Func(\varphi)=W(\phi^m,\symgrad(\vec{u}^m))$ and \eqref{basic_inequality_fractional}, and get the following estimate
\begin{equation*}
    \begin{aligned}
  \dt \left(g_{1-\alpha}*\left(\frac{c}{2}\|\phi^m\|^2+\int_\Omega W(\phi^m,\symgrad(\vec{u}^m))\dd \vec{x}\right)\right)(t)
  +\left(c(\phi^m,\phi^m-\phi_0^m)-\frac{c}{2}\|\phi^m\|^2\right)g_{1-\alpha}(t) \\ +\left(\big(\p_{\phi}W(\phi^m,\symgrad(\vec{u}^m)),\phi^m-\phi_0^m\big)+ \big(\p_{\symgrad}W(\phi^m,\symgrad(\vec{u}^m)),\symgrad(\vec{u}^m)-\symgrad(\vec{u}^m_0)\big)-\int_\Omega W(\phi^m,\symgrad(\vec{u}^m))\dd \vec{x}\right) g_{1-\alpha}(t) \\ + (M_\phi\nabla\mu^m,\nabla \mu^m) 
\leq N_\phi\big(f^m,\mu^m\big)- P_\phi\big(g^m,\mu^m\big).
    \end{aligned}
\end{equation*}
Using the convexity of the functionals $W(\phi^m,\symgrad(\vec{u}^m))$ and $\frac{c}{2}(\phi^m)^2$, \ref{Ass:mobility}, \eqref{est:mu}, \eqref{est:u} and \eqref{est:u_0}, we have
\begin{equation}\label{est:middle:phi}
    \begin{aligned}
  \dt \left(g_{1-\alpha}*\left(\frac{c}{2}\|\phi^m\|^2+\int_\Omega W(\phi^m,\symgrad(\vec{u}^m))\dd \vec{x}\right)\right)(t)
  + M_0\|\nabla\mu^m\|^2 
\leq\begin{multlined}[t]
C\left(\|\phi^m_0\|^2g_{1-\alpha}(t) + \|\phi^m\|^2\right) \\+  \frac{N_\phi}{2}\|f^m\|^2+ \frac{P_\phi}{2}\|g^m\|^2.
\end{multlined} 
    \end{aligned}
\end{equation}
All the terms in the above estimate belong to $\L_1(0,T)$, and so we \revision{convolve} with $g_{\alpha}$ to get a bound for $\phi^m$ in the space $\L_2(0,T;\L_2(\Omega))$. Using the fact that $g_{1-\alpha}*\left(\frac{c}{2}\|\phi^m\|^2+\int_\Omega W(\phi^m,\symgrad(\vec{u}^m))\dd \vec{x}\right)(0)=0$ and the auxiliary result $g_{\alpha}*g_{1-\alpha} =g_{1}$, see \cite[Theorem 2.2]{diethelm2010analysis}, we have 
\begin{align*}
g_{\alpha}*\dt \left(g_{1-\alpha}*\left(\frac{c}{2}\|\phi^m\|^2+\int_\Omega W(\phi^m,\symgrad(\vec{u}^m))\dd \vec{x}\right)\right)\\
= \dt\left( g_{\alpha}*g_{1-\alpha}*\left(\frac{c}{2}\|\phi^m\|^2+\int_\Omega W(\phi^m,\symgrad(\vec{u}^m))\dd \vec{x}\right)\right) = \frac{c}{2}\|\phi^m\|^2+\int_\Omega W(\phi^m,\symgrad(\vec{u}^m))\dd \vec{x}.
\end{align*}
\revision{Convolving} \eqref{est:middle:phi} with $g_{\alpha}$, we have for almost all $t\in[0,T]$,
\begin{equation*}
    \begin{aligned}
\frac{c}{2}\|\phi^m\|^2+\int_\Omega W(\phi^m,\symgrad(\vec{u}^m))\dd \vec{x}
  + M_0\left(g_{\alpha}*\|\nabla\mu^m\|^2 \right)(t)
\leq C\|\phi_0^m\|^2 +C\left(g_{\alpha}*\|\phi^m\|^2\right)(t)\\ +\frac{N_\phi}{2}\left(g_{\alpha}*\|f^m\|^2\right)(t)+ \frac{P_\phi}{2}\left(g_{\alpha}*\|g^m\|^2\right)(t).
    \end{aligned}
\end{equation*}
Further, using \eqref{form:int_W} and \eqref{est:u}, we have 
\begin{equation*}
    \begin{aligned}
\frac{1}{2}\left(c-\frac{\lambda^2(1-2\nu)}{2G\nu}\right)\|\phi^m\|^2 + M_0\left(g_{\alpha}*\|\nabla\mu^m\|^2\right)(t)
\leq 
 C\|\phi_0^m\|^2 +C\left(g_{\alpha}*\|\phi^m\|^2\right)(t) \\
 + \frac{N_\phi}{2}\left(g_{\alpha}*\|f^m\|^2\right)(t)+ \frac{P_\phi}{2}\left(g_{\alpha}*\|g^m\|^2\right)(t),
    \end{aligned}
\end{equation*}
where the constant in the first term is positive by \ref{Ass:paramters}. Using the generalised Gronwall--Bellman Lemma \ref{Lem_Gronwall_fractional}, integrating from $0$ to $T$,
using Young's inequality for convolution \eqref{inequality:Youngs} and the property of orthogonal projection, we obtain the upper bound
\begin{equation} \label{est:phi}
	\|\phi^m\|_{\L_2(0,T;\L_2(\Omega))}^2 \leq C\left( \|\phi_0\|^2+C_f+C_g\right).
\end{equation}
Moreover, integrating \eqref{est:middle:phi} from $0$ to $T$, using \eqref{est:phi} and the fact that $g_{1-\alpha}$ is positive yields
\begin{equation} \label{est:nabla_mu} 
\|\nabla \mu^m\|^2_{\L_2(0,T;\L_2(\Omega))} \leq C\left( \|\phi_0\|^2+C_f+C_g\right).
\end{equation}

\medskip
\noindent \textbf{Estimates for $\psi^m$.} We have stated in \eqref{Faedo--Galerkin:psi} the following Faedo--Galerkin equation for $\psi^m$,
\begin{align*}
	\big(\partial_t \psi^m,z_k\big) + \big(M_\psi\nabla \psi^m,\nabla z_k\big)&= \big(S_\psi,z_k\big) - N_\psi \big(f^m,z_k\big) + \left(\psi_b^m,z_k\right)_{\p\Omega\backslash\Dirpsi},
\end{align*}
and by multiplying this equation by $\Cpsi^m_k(t)$ and taking summation over $k=1$ to $m$ and using (A4), we arrive at
\begin{align*}
\big(\partial_t \psi^m,\psi^m\big) + M_0 \|\nabla \psi^m\|^2 &\leq  N_\psi \big(f^m,\psi^m\big) + \big(S_\psi,\psi^m\big) +  \left(\psi_b^m,\psi^m\right)_{\p\Omega\backslash\Dirpsi}.
\end{align*}
Using the inequalities \eqref{inequality:Holder} and \eqref{inequality:Youngs}, we estimate the right hand side and get the following upper bound
\begin{align*}
\frac{1}{2}\dt \|\psi^m\|^2+ M_0 \|\nabla \psi^m\|^2 &\leq \bigg(\frac{N_\psi}{2}+\frac{1}{2}\bigg)\|\psi^m\|^2+ \frac{N_\psi}{2}\|f^m\|^2 +\frac{1}{2}\|S_\psi\|^2 + \|\psi_b^m\|_{\p\Omega\backslash\Dirpsi} \|\Upsilon\psi^m\|_{\p\Omega\backslash\Dirpsi},
\end{align*}
where $\Upsilon:\H^1(\Omega)\rightarrow\L_2(\p\Omega\backslash\Dirpsi)$ is the trace operator. Since the trace operator is continuous, we have $\|\Upsilon\varphi\|_{\p\Omega\backslash\Dirpsi}\leq C\|\varphi\|_{\H^1(\Omega)}$ for every $\varphi\in\H^1(\Omega)$, see \cite[Section 5.5, Theorem 1]{evans2010partial}. Applying $\epsilon$-Young's inequality \eqref{inequality:Youngs}, we find
\begin{align*}
\frac{1}{2}\dt \|\psi^m\|^2+ \frac{M_0}{2} \|\nabla \psi^m\|^2 &\leq \bigg(\frac{N_\psi}{2}+1\bigg)\|\psi^m\|^2+ \frac{N_\psi}{2}\|f^m\|^2 +\frac{1}{2}\|S_\psi\|^2 + 
C\|\psi_b^m\|^2_{\p\Omega\backslash\Dirpsi}.
\end{align*}
Finally the Gronwall--Bellman Lemma \ref{Lem_Gronwall}, yields the estimate,
\begin{equation} \label{final:4}
	\|\psi^m\|_{\L_2(0,T;\L_2(\Omega))}^2+\|\nabla \psi^m\|_{\L_2(0,T;\L_2(\Omega))}^2 \leq \begin{multlined}[t] C \big(\|\psi_0\|^2 + C_f+\|S_{\psi}\|_{\L_2(0,T;\L_2(\Omega))}^2\\ +\|\psi_b\|_{\L_2(0,T;\L_2(\p\Omega\backslash\Dirpsi))}^2\big). \end{multlined}
\end{equation}

\medskip
\noindent \textbf{Estimates for $\chi^m$.} 
Multiplying \eqref{Faedo--Galerkin:chi} with $\Cchi^m_k(t)$ and taking summation over $k=1$ to $m$, using the typical inequalities and proceeding as in the estimates for $\psi$, we have the estimate
\begin{equation} \label{final:5}
\|\chi^m\|_{\L_2(0,T;\L_2(\Omega))}^2+\|\nabla \chi^m\|_{\L_2(0,T;\L_2(\Omega))}^2 \leq \begin{multlined}[t] C \big(\|\chi_0\|^2 +C_g  +\|S_{\chi}\|_{\L_2(0,T;\L_2(\Omega))}^2\\ +\|\chi_b\|_{\L_2(0,T;\L_2(\p\Omega\backslash\Dirpsi))}^2\big). \end{multlined}
\end{equation}
Summing the equations \eqref{est:mu}, \eqref{est:u}, \eqref{est:phi}--
\eqref{final:5}, we arrive at the energy estimate
\begin{equation} \label{final:estimate}
    \begin{aligned}
     \|\phi^m\|_{\L_2(0,T;\L_2(\Omega))}^2 &+ \|\mu^m\|^2_{\L_2(0,T;\H^1(\Omega))}+\|\vec{u}^m\|_{\L_2(0,T;\H^1(\Omega;\R^d))}^2+
    \| \psi^m\|_{\L_2(0,T;\H^1(\Omega))}^2 \\ 
    +\| \chi^m\|_{\L_2(0,T;\H^1(\Omega))}^2 &\leq \begin{multlined}[t]C \big(\textup{IC}+C_f+C_g+\|S_{\psi}\|_{\L_2(0,T;\L_2(\Omega))}^2 +\|S_{\chi}\|_{\L_2(0,T;\L_2(\Omega))}^2 \\+ \|\psi_b\|_{\L_2(0,T;\L_2(\p\Omega\backslash\Dirpsi))}^2 + \|\chi_b\|_{\L_2(0,T;\L_2(\p\Omega\backslash\Dirpsi))}^2\big).\end{multlined}
    \end{aligned}
\end{equation}

\medskip
\noindent \textbf{Estimates for the time derivatives.}
Since our equations in which we wish to pass to the limit have nonlinear functions in $\phi^m,\psi^m,\chi^m$, we need the strong convergence of these sequences. For this purpose we bound the time derivatives and use the compactness results \eqref{Lem_Aubin} and \eqref{Lem_FractionalAubin}.
\par We first obtain the estimate of time derivative of $\phi^m$ using the estimates from the time derivatives of $\mu^m$ and $\vec{u}^m$.
Multiplying \eqref{Faedo--Galerkin:mu_derivative} with $\dt g_{1-\alpha}*(\Cmu^m_k(t)-(\mu_0^m,y_k))$, summing from $k=1$ to $m$, and estimating using H\"older's inequality \eqref{inequality:Holder}, we have
\begin{equation}\label{est:time_derivative_mu}
\left\|\FDmuM\right\| \leq  c\left\|\FDphiM\right\|+\lambda\left\|\FDdivuM\right\|. 
\end{equation}
Multiplying \eqref{Faedo--Galerkin:u_derivative} with $\dt g_{1-\alpha}*(\Cu^m_k(t)-(\vec{u}_0^m,\vec{w}_k))$, summing from $k=1$ to $m$, we have
\begin{equation*}
    \begin{aligned}
       2G\left\|\FDsymuM\right\|^2+ \frac{2G\nu}{1-2\nu}\left\|\FDdivuM\right\|^2 =- \lambda\left(\FDphiM,\FDdivuM\right).
    \end{aligned}
\end{equation*}
Using H\"{o}lder's inequality \eqref{inequality:Holder} gives us
\revision{\begin{equation}\label{est:time_derivative_u}
    \begin{aligned}
   \frac{2G\nu}{1-2\nu}\left\|\FDdivuM\right\|^2 \leq  \lambda\left\|\FDphiM\right\|^2.
    \end{aligned}
\end{equation}}
Multiplying \eqref{Faedo--Galerkin:nabla_mu_0} with $(\mu_0^m,y_k)$ and summing from $k=1$ to $m$, we obtain
\begin{equation*}
\|\nabla\mu_0^m\|^2\leq \lambda_k^y c(\phi_0^m, \mu_0^m)+\lambda_k^y \lambda (\nabla\cdot\vec{u}_0^m,\mu_0^m).
\end{equation*}
Using H\"{o}lder's \eqref{inequality:Holder}, \eqref{est:mu_0} and \eqref{est:u_0}, we have
\begin{equation}
\|\nabla\mu_0^m\|^2\leq C\|\phi_0^m\|^2.\label{est:nabla_mu_0}
\end{equation}

We now use the above two estimates to obtain the estimate of time derivative of $\phi^m$. Multiplying \eqref{Faedo--Galerkin:phi} with $\dt g_{1-\alpha}*(\Cphi^m_k-(\phi_0,y_k))$, and \eqref{Faedo--Galerkin:mu_derivative} with $\dt g_{1-\alpha}*(\Cmu^m_k(t)-(\mu_0^m,y_k))$, and summing we get
\begin{equation*}
    \begin{aligned}
c\left\|\FDphiM\right\|^2 + \left(M_\phi\nabla\mu^m,\nabla\FDmuM\right)  &=  N_\phi\big(f^m,\FDmuM\big)- P_\phi\big(g^m,\FDmuM\big) \\
&- \lambda \left(\FDdivuM,\FDphiM\right).
    \end{aligned}
\end{equation*}
Using \eqref{basic_inequality_fractional} and H\"older's inequality \eqref{inequality:Holder}, we have
\begin{equation*}
    \begin{aligned}
c\left\|\p_t^{\alpha}(\phi^m-\phi_0^m)\right\|^2 + \frac{M_0}{2}\dt&\left( g_{1-\alpha}*\|\nabla\mu^m\|^2\right)(t)   \leq \left(N_\phi\|f^m\|+ P_\phi\|g^m\|\right)\left\|\FDmuM\right\|\\
&+ g_{1-\alpha}(t)\|\nabla\mu^m_0\| + \lambda \left\|\FDdivuM\right\|\left\|\FDphiM\right\|.
    \end{aligned}
\end{equation*}
Using \eqref{est:time_derivative_mu} and \eqref{est:time_derivative_u}, we have, for every $\epsilon_1,\epsilon_2>0$,
\begin{equation*}
    \begin{aligned}
c\left\|\FDphiM\right\|^2 + \frac{M_0}{2}\dt&\left( g_{1-\alpha}*\|\nabla\mu^m\|^2\right)(t)   \leq 
 \left(\frac{\epsilon_1+\epsilon_2}{4\epsilon_1\epsilon_2}\right) \left(N_\phi\|f^m\|^2+ P_\phi\|g^m\|^2\right)\\
&+ g_{1-\alpha}(t) \|\nabla\mu^m_0\|^2 + \left(\epsilon_1 c+(\epsilon_2 +1)\frac{\lambda^2(1-2\nu)}{2G\nu}\right)\left\|\FDphiM\right\|^2.
    \end{aligned}
\end{equation*}
Choosing $\epsilon_1$ and $\epsilon_2$ appropriately, using assumption \ref{Ass:paramters} and integrating from $0$ to $T$, using $g_{1-\alpha}$ is positive and \eqref{est:nabla_mu_0}, we get an upper bound
\begin{equation}\label{est:time_derivative_phi}
\frac{1}{2}\left(c-\frac{\lambda^2(1-2\nu)}{2G\nu}\right)\left\|\FDphiM\right\|^2_{\L_2(0,T;\L_2(\Omega))}  \leq
\begin{multlined}[t]
C(\|\phi^m_0\|^2+C_f+C_g).
\end{multlined} 
\end{equation}

\par We now obtain the estimates of time derivatives of $\psi^m$ and $\chi^m$. Let $\zeta_1\in \L_2(0,T;\H_{0,\Dirpsi}^1(\Omega))$, 
 such that 
$\Pi_{\PZ^m}\zeta_1 = \sum_{k=1}^m\zeta_{1,k}z_k$.
We use the boundedness of the projection and the invariance of the time derivatives under the adjoint operator of 
$\Pi_{\PZ^m}$, i.e., 
\begin{equation*}\label{identity:projection_under_time-derivative}
\begin{aligned}
\langle \partial_t \psi^m,\zeta_1\rangle = \langle \partial_t \psi^m,\Pi_{\PZ^m}\zeta_1\rangle,
	\end{aligned}
\end{equation*}
see \cite[Lemma V.1.6]{boyer2013navier}. Multiplying the Faedo--Galerkin equations 
\eqref{Faedo--Galerkin:psi} with $\zeta_{1,k}$ 
yields
\begin{equation}\label{est:time_derivative_psi}
\begin{aligned}
\int_0^T &\langle \partial_t \psi^m,\zeta_1\rangle \dd t = \begin{multlined}[t] -\int_0^T  \big(M_\psi\nabla \psi^m,\nabla \Pi_{\PZ^m}\zeta_1 \big)\dd t -\int_0^T N_\psi \big(f^m,\Pi_{\PZ^m}\zeta_1\big)\dd t \\+\int_0^T \big(S_\psi,\Pi_{\PZ^m}\zeta_1\big)\dd t + \int_0^T\big(\psi_b,\Pi_{\PZ^m}\zeta_1\big)_{\p\Omega\backslash\Dirpsi}\dd t,\end{multlined} \\
&\leq C\big(\|\psi_0\|+C_f+ \|S_\psi\|_{\L_2(0,T;\L_2(\Omega))}\big)\|\nabla \Pi_{\PZ^m}\zeta_1\|_{\L_2(0,T;\L_2(\Omega))},\\
&\leq C\big(\|\psi_0\|+C_f+ \|S_\psi\|_{\L_2(0,T;\L_2(\Omega))} + \|\psi_b\|_{\L_2(0,T;\L_2(\p\Omega\backslash\Dirpsi))} \big) \|\zeta_1\|_{\L_2(0,T;\H^1(\Omega))},
\end{aligned}
\end{equation}
and 
\begin{equation}\textbf{\label{est:time_derivative_chi}}
    \int_0^T\langle \partial_t \chi^m,\zeta_1\rangle \dd t\leq C(T,g,S_\chi,\chi_0,\chi_b) \|\zeta_1\|_{\L_2(0,T;\H^1(\Omega))}.
\end{equation}


\subsection{Existence of a weak solution}\label{subsec:existence_weak_solution}
We now prove that there is a subsequence of $\phi^m,\mu^m,\vec{u}^m,\psi^m,\chi^m$ which converges to the weak solution of our model \eqref{System} in the sense of Definition \ref{Def:WeakSolution}. We prove this by showing that the limit functions \revision{satisfy} the variational form \eqref{eqn:variational_form} and also \revision{satisfy} the initial conditions.

\noindent\textbf{Weak Convergence.}
The energy estimate \eqref{final:estimate} provides us the following 
\begin{equation}
\label{weak_convergence_bounds}
\begin{aligned}
\{\phi^m\} &\text{ bounded in }  \L_2(0,T;\L_2(\Omega)),\\
\{\mu^m\}, \{\psi^m\}, \{\chi^m\} &\text{ bounded in }  \L_2(0,T;\H^1(\Omega)),\\
\{\vec{u}^m\} &\text{ bounded in } \L_2(0,T;\H^1(\Omega;\R^d)).
\end{aligned}
\end{equation}
By the Banach--Alaoglu theorem, these bounded sequences have weakly convergent subsequences which we indicate with the same index. Hence, there exist functions $\phi,\mu,\psi,\chi:(0,T)\times\Omega\rightarrow \R$ and $\vec{u}:(0,T)\times\Omega\rightarrow \R^d$ such that as $m\rightarrow \infty$ we have the following weak convergences
\begin{equation} \label{Eq:WeakConvergence}
\begin{aligned}
\phi^m \rightharpoonup \phi &\quad \mbox{ in }\quad \L_2(0,T;\L_2(\Omega)),\\
	\mu^m\rightharpoonup \mu,\ \psi^m\rightharpoonup \psi,\ \chi^m\rightharpoonup \chi &\quad \mbox{ in }\quad \L_2(0,T;\H^1(\Omega)),\\
	\vec{u}^m\rightharpoonup \vec{u} &\quad \mbox{ in }\quad \L_2(0,T;\H^1(\Omega;\R^d).
\end{aligned}
\end{equation}

\noindent\textbf{Strong Convergence.}
From the inequalities \eqref{est:time_derivative_phi}-\eqref{est:time_derivative_chi}, we conclude that 
$$\begin{aligned}
	\{\phi^m\}&\text{ bounded in }  \W^\alpha_{2,2}(0,T;\phi_0,\L_2(\Omega),\L_2(\Omega)),\\
	\{\psi^m\},\{\chi^m\}&\text{ bounded in } \W^1_{2,2}(0,T;\H^1(\Omega),\H^{-1}(\Omega)). 
\end{aligned}$$
Using the Aubin--Lions compactness theorem and compactness results similar for fractional differential equations, see \eqref{Lem_Aubin} and \eqref{Lem_FractionalAubin}, we have
\begin{eqnarray*}
\W^{\alpha}_{2,2}(0,T;\phi_0,\L_2(\Omega),\L_2(\Omega))\doublehookrightarrow \L_2(0,T;\L_2(\Omega)),  \\
	\W^1_{2,2}(0,T;\H^1(\Omega),\H^{-1}(\Omega)) \doublehookrightarrow \L_2(0,T;\L_2(\Omega)),
\end{eqnarray*}
and therefore we have the strong convergences (as $m \to \infty$)
\begin{equation} \label{Eq:StrongConvergence}
\begin{aligned}
\phi^m \rightarrow \phi,\ \psi^m \rightarrow \psi,\ \chi^m \rightarrow \chi &\quad \mbox{ in } \L_2(0,T;\L_2(\Omega)).
\end{aligned}
\end{equation}


\noindent\textbf{Variational form.}
We now show the limit functions satisfy the variational form \eqref{eqn:variational_form}. Let $\eta\in C^\infty_0(0,T)$, 
multiplying the Faedo--Galerkin system \eqref{Faedo--Galerkin} by $\eta$ and integrating from $0$ to $T$, we have 
\begin{subequations}\label{limit_VF}
\begin{align}
\int_0^T&\big(\partial_t^{\alpha}(\phi^m-\phi^m_0),\eta(t)y_k\big)\dd t+\int_0^T  (M_\phi\nabla\mu^m,\eta(t)\nabla y_k) \dd t \nonumber\\
&=  N_\phi\int_0^T \big(f^m,\eta(t)y_k\big)\dd t- P_\phi\int_0^T \big(g^m,\eta(t)y_k\big)\dd t,\\
\int_0^T & (\mu^m,\eta(t)y_k) \dd t=c\int_0^T(\phi^m,\eta(t)y_k)\dd t + \lambda\int_0^T(\nabla\cdot\vec{u}^m,\eta(t)y_k)\dd t,\\
 2G\int_0^T&\big(\symgrad(\vec{u}^m),\eta(t)\symgrad(\vec{w}_k)\big)\dd t+ \frac{2G\nu}{1-2\nu} \int_0^T \big(\nabla\cdot \vec{u}^m ,\eta(t)\nabla\cdot\vec{w}_k\big)\dd t =-\int_0^T \lambda\big(\phi^m,\eta(t)\nabla\cdot\vec{w}_k\big)\dd t, \\
\int_0^T &\big(\partial_t \psi^m,\eta(t)z_k\big)\dd t +\int_0^T \big(M_\psi\nabla \psi^m,\eta(t)\nabla z_k\big)\dd t \nonumber\\
&= \int_0^T \big(S_\psi,\eta(t)z_k\big)\dd t - N_\psi \int_0^T \big(f^m,\eta(t)z_k\big)\dd t + \int_0^T\big(\psi_b^m,z_k\big)_{\p\Omega\backslash\Dirpsi}\dd t,\\
\int_0^T &\big(\partial_t \chi^m,\eta(t)z_k\big)\dd t +\int_0^T \big(M_\chi\nabla \chi^m,\eta(t)\nabla z_k\big)\dd t \nonumber\\
&=  \int_0^T \big(S_\chi,\eta(t)z_k\big)\dd t - N_\chi\int_0^T  \big(\chi^m,\eta(t)z_k\big) \dd t  - P_\chi\int_0^T  \big(g^m,\eta(t)z_k\big)\dd t + \int_0^T\big(\chi_b^m,z_k\big)_{\p\Omega\backslash\Dirpsi}\dd t.
\end{align}
\end{subequations}

\par The convergence of the linear terms follows directly from the definition of weak convergence. For instance, the functional 
\begin{equation*}
\begin{aligned}
\mu^m\mapsto\int_0^T (\nabla\mu^m,\eta(t)\nabla y_k) \dd t
&\leq \|\mu^m\|_{\L_2(0,T;\H^1(\Omega))}\|\eta\|_{\L_2(0,T)}\|\nabla y_k\|,\\
\end{aligned}
\end{equation*}
is linear and continuous on $\L_2(0,T;\H^1(\Omega))$ and therefore, we get from \eqref{Eq:WeakConvergence} that
$$\int_0^T (\nabla\mu^m,\eta(t)\nabla y_k) \dd t\rightarrow \int_0^T (\nabla\mu,\eta(t)\nabla y_k) \dd t,$$
for $m\rightarrow \infty$. The terms with time derivatives follow from integration by parts, change of integration and the definition of the weak convergence. The functionals
\begin{equation*}
\begin{aligned}
\phi^m&\mapsto\int_0^T\big(\p_t\left( g_{1-\alpha}*(\phi^m-\phi^m_0)\right)(t),\eta(t)y_k\big)\dd t = \begin{multlined}[t]-\int_0^T\left((\phi^m-\phi^m_0), \left(g_{1-\alpha}*'\p_t\eta\right)(t) y_k\right)\dd t,\\
\!\!\!\!\!\!\!\!\!\!\!\!\!\!\!\!\!\!\!\!\!\!\!\!\!\!\!\!\!\!\!\!\!\hspace{-1cm}\leq \|\phi^m-\phi^m_0\|_{\L_2(0,T;\L_2(\Omega))} \|g_{1-\alpha}\|_{\L_1(0,T)}\|\eta'\|_{\L_2(0,T)}\|y_k\|, \end{multlined}\\
\psi^m&\mapsto \int_0^T \big(\partial_t \psi^m,\eta(t)z_k\big)\dd t = -\int_0^T \big( \psi^m,\eta'(t)z_k\big)\dd t\leq \|\psi^m\|_{\L_2(0,T;\L_2(\Omega))}\|\eta'\|_{\L_2(0,T)}\|z_k\|,\\	
\chi^m&\mapsto\int_0^T \big(\partial_t \chi^m,\eta(t)z_k\big)\dd t = -\int_0^T \big( \chi^m,\eta'(t)w_k\big)\dd t\leq \|\chi^m\|_{\L_2(0,T;\L_2(\Omega))}\|\eta'\|_{\L_2(0,T)}\|z_k\|,
\end{aligned}
\end{equation*}
as we see are linear and continuous on $\L_2(0,T;\L_2(\Omega))$, and so by weak convergence and reapplying the integration by parts, we obtain the limit for the terms with time derivatives in \eqref{limit_VF}.

\par The strong convergence results in \eqref{Eq:StrongConvergence} give us the limits of the terms involving nonlinear functions. From the strong convergence, we have
	\begin{equation*}
		\phi^m\rightarrow \phi,\ \psi^m\rightarrow \psi \quad \mbox{ in }\L_2(0,T;\L_2(\Omega))\cong \L_2((0,T)\times \Omega),\quad \mbox{ as } m\rightarrow \infty.  
	\end{equation*}
This implies there exist subsequences such that they converge almost everywhere on $(0,T)\times \Omega$. Since almost everywhere convergence is preserved under composition of a continuous functional, we have
\begin{equation*}
f^m=f(\phi^m,\psi^m)\rightarrow f(\phi,\psi)\quad \mbox{ a.e. in }(0,T)\times \Omega,\quad \mbox{ as } m\rightarrow \infty.
\end{equation*}
Since $\{f^m\}$ is bounded, we have by the Lebesgue dominated convergence theorem 
\begin{equation*}
	f^m\eta(t)y_k\rightarrow f(\phi,\psi)\eta(t)y_k\quad \mbox{ in }\L_1(0,T\times\Omega),\quad \mbox{ as } m\rightarrow \infty.
\end{equation*}
Convergence of the terms involving $g^m$ follow analogously. 

By the above convergence results, we have that \eqref{limit_VF} holds true with the limit functions for all $\eta\in C_0^\infty(0,T)$. By the Lemma of du Bois-Reymond, we get that the limit functions $(\phi,\mu,\vec{u},\psi,\chi)$ satisfy
\begin{equation*}
\begin{aligned}
\left(\partial_t^{\alpha}(\phi(t)-\phi_0),y_k\right)+ (M_\phi\nabla\mu,\nabla y_k)  &= \begin{multlined}[t] N_\phi\big(f(\phi,\psi),y_k\big) - P_\phi\big(g(\phi,\chi),y_k\big),\end{multlined}\\
(\mu,y_k) &= c(\phi,y_k)+\lambda(\nabla\cdot\vec{u},y_k),\\
\left<\partial_t \psi,z_k\right> + \big(M_\psi \nabla \psi,\nabla z_k\big)&= \big(S_\psi,z_k\big) - N_\psi \big(f(\phi,\psi),z_k\big) + \big(\psi_b,z_k\big)_{\p\Omega\backslash\Dirpsi}, \\
\left<\partial_t \chi,z_k\right> + \big(M_\chi \nabla \psi,\nabla z_k\big)&= \big(S_\chi,z_k\big) - N_\chi(\chi,z_k) - P_\chi \big(g(\phi,\chi),z_k\big) + \big(\chi_b,z_k\big)_{\p\Omega\backslash\Dirpsi},\\
-\lambda\big(\phi,\nabla\cdot\vec{w}_k\big)&=2G\big(\symgrad(\vec{u}),\symgrad(\vec{w}_k)\big) + \frac{2G\nu}{1-2\nu}\big(\nabla\cdot \vec{u} ,\nabla\cdot\vec{w}_k\big),
\end{aligned}
\end{equation*}
for almost all $t\in(0,T)$ and for all $k\geq 1$. Using the density of $\cup_{m\in\mathbb{N}}\PY^m$, $\cup_{m\in\mathbb{N}}\PZ^m$, $\cup_{m\in\mathbb{N}}\PW^m$ in $\H^1(\Omega)$, $\H_{0,\Dirpsi}^1(\Omega)$, $\H_{0,\Diru}^1(\Omega;\R^d)$ respectively, we obtain a solution  $(\phi,\mu,\vec{u},\psi,\chi)$ to the system \eqref{System} in the sense of Definition \ref{Def:WeakSolution}, provided they satisfy the initial conditions.\\

\noindent \textbf{Initial conditions.}
We now prove that the limit functions satisfy the initial conditions. From the continuous embedding results \eqref{Lem_InterpolationEmbedding} and \eqref{Lem_ContinousEmbedding_fractional}, we have
\begin{eqnarray*}
	\W^1_{2,2}(0,T;\H^1(\Omega),\H^{-1}(\Omega)) \hookrightarrow \C([0,T];\L_2(\Omega)),\\
	\phi\in \W^\alpha_{2,2}(0,T;\phi_0,\L_2(\Omega),\L_2(\Omega)) \implies \left(g_{1-\alpha}*(\phi-\phi_0)\right)(t)\in \C([0,T];\L_2(\Omega)),
\end{eqnarray*} 
yields $\psi,\chi\in\C([0,T];\L_2(\Omega))$ and $\left(g_{1-\alpha}*(\phi-\phi_0)\right)(t)\in \C([0,T];\L_2(\Omega))$.
 Let $\eta\in \C^1([0,T];\R)$ with $\eta(T)=0$ and $\eta(0)=1$. Then for all $\xi_1\in\H^1(\Omega), \xi_4\in\H^1_{0,\Dirpsi}(\Omega)$, we have
\begin{align*}
\int_0^T\left(\partial_t \left(g_{1-\alpha}*(\phi-\phi_0)\right)(t),\eta(t)\xi_1\right)\dd t&= \begin{multlined}[t]-\int_0^T\big((g_{1-\alpha}*(\phi-\phi_0))(t),\eta'(t)\xi_1\big)\dd t\\
-\big((g_{1-\alpha}*(\phi-\phi_0))(0),\xi_1\big), \end{multlined} \\
\int_0^T \left<\partial_t \psi,\eta(t)\xi_4\right>\dd t&=-\int_0^T \big(\psi(t),\eta'(t)\xi_4\big)\dd t-\big(\psi(0),\xi_4\big).
\end{align*}
Taking the limit $m\rightarrow \infty$, we have
\begin{align*}
-&\int_0^T\big(\left(g_{1-\alpha}*(\phi^m-\phi^m(0))\right)(t),\eta'(t)\xi_1\big)\dd t-\big((g_{1-\alpha}*(\phi^m-\phi^m(0)))(0),\xi_1\big)\\
&\rightarrow -\int_0^T\big(\left(g_{1-\alpha}*(\phi-\phi_0)\right)(t),\eta'(t)\xi_1\big)\dd t-\big(0,\xi_1\big),\\
-&\int_0^T \big(\psi^m(t),\eta'(t)\xi_4\big)\dd t-\big(\psi^m(0),\xi_4\big)\rightarrow -\int_0^T \big(\psi(t),\eta'(t)\xi_4\big)\dd t-\big(\psi_0,\xi_4\big).
\end{align*}
Comparing, we have $\left(g_{1-\alpha}*(\phi-\phi_0)\right)(0)=0$ and $\psi(0)=\psi_0$. Analogously, we get $\chi(0)=\chi_0$.
\begin{remark}
The result $\left(g_{1-\alpha}*(\phi-\phi_0)\right)(0)=0$ does not imply $\phi(0)=\phi_0$. However, the function $\phi_0$ plays the role of an initial data for $\phi$ in a weak sense. Suppose, $\phi$ and $\partial_t \left(g_{1-\alpha}*(\phi-\phi_0)\right)(t)$ are in $\C([0,T];\L_2(\Omega))$ and $\left(g_{1-\alpha}*(\phi-\phi_0)\right)(0)=0$, then we have 
$\phi\in \C([0,T];\L_2(\Omega))$ and $\phi(0)=\phi_0$, see \cite{zacher2009weak}.
\end{remark}

\medskip
\noindent\textbf{Estimates for the weak solution.}
We know that norms are weakly (also weakly-$*$) lower semicontinuous and using the weak convergences in \eqref{Eq:WeakConvergence}
\begin{equation*}
\begin{aligned}
&\|\phi\|_{\L_2(0,T;\L_2(\Omega))}^2+\|\mu\|^2_{\L_2(0,T;\H^1(\Omega))}+\|\vec{u}\|_{\L_2(0,T;\H^1(\Omega;\R^d))}^2+\| \psi\|_{\L_2(0,T;\H^1(\Omega))}^2+\| \chi\|_{\L_2(0,T;\H^1(\Omega))}^2\\
&\leq\liminf_{m\rightarrow\infty}\begin{multlined}[t]\left( \|\phi^m\|_{\L_2(0,T;\L_2(\Omega))}^2\right.+\|\mu^m\|^2_{\L_2(0,T;\H^1(\Omega))}+\|\vec{u}^m\|_{\L_2(0,T;\H^1(\Omega;\R^d))}^2\\+\| \psi^m\|_{\L_2(0,T;\H^1(\Omega))}^2+\left.\| \chi^m\|_{\L_2(0,T;\H^1(\Omega))}^2\right),\end{multlined}\\
&\leq \begin{multlined}[t]C \big(\textup{IC}+C_f+C_g+\|S_{\psi}\|_{\L_2(0,T;\L_2(\Omega))}^2+\|S_{\chi}\|_{\L_2(0,T;\L_2(\Omega))}^2+\|\psi_b\|^2_{\L_2(0,T;\L_2(\p\Omega\backslash\Dirpsi))}\\ +\|\chi_b\|^2_{\L_2(0,T;\L_2(\p\Omega\backslash\Dirpsi))}\big),\end{multlined}
\end{aligned}
\end{equation*}
where the final bound is obtained from \eqref{final:estimate}, and IC is as defined in the theorem statement.

\subsection{Uniqueness}\label{subsec:uniqueness}
Now we prove the uniqueness of the weak solution under the assumption of Lipschitz continuity of the nonlinear functions. We assume that there exist two weak solutions $(\phi_1,\mu_1,\vec{u}_1,\psi_1,\chi_1)$ and $(\phi_2,\mu_2,\vec{u}_2$, $\psi_2,\chi_2)$ to the system and prove that these two solutions have to be identical. Introducing the following notations:
\begin{gather*}
\tilde{\phi} :=\phi_1-\phi_2,\ \ \ \tilde{\mu} :=\mu_1-\mu_2,\ \ \ \tilde{\vec{u}}:=\vec{u}_1-\vec{u}_2,\\
\tilde{\psi} :=\psi_1-\psi_2,\ \ \ \tilde{\chi}:=\chi_1-\chi_2,\\
f_1-f_2:=f(\phi_1,\psi_1)-f(\phi_2,\psi_2),\ \ \ g_1-g_2 :=g(\phi_1,\chi_1)-g(\phi_2,\chi_2),
\end{gather*}
%
we see that $(\tilde{\phi},\tilde{\mu},\tilde{\vec{u}},\tilde{\psi},\tilde{\chi})$ satisfies
\begin{subequations}
\begin{align}
\big(\p_t^{\alpha}\tilde{\phi},\xi_1\big)+ (M_\phi\nabla\tilde{\mu},\nabla \xi_1)  &= N_\phi\big(f_1-f_2,\xi_1\big)-P_\phi\big(g_1-g_2,\xi_1\big),\label{Unique_VF_1}\\
\big(\tilde{\mu},\xi_2\big)&=c\big(\tilde{\phi},\xi_2\big)+\lambda\big(\nabla\cdot\tilde{\vec{u}},\xi_2\big),\label{Unique_VF_5}\\
2G\big(\symgrad(\tilde{\vec{u}}),\symgrad(\vec{\xi}_3)\big)+ \frac{2G\nu}{1-2\nu}\big(\nabla\cdot \tilde{\vec{u}}, \nabla\cdot\vec{\xi}_3\big)&=-\lambda\big(\tilde{\phi},\nabla\cdot\vec{\xi}_3\big),\label{Unique_VF_4}\\
\left< \partial_t \tilde{\psi},\xi_4\right> + \big(M_\psi\nabla \tilde{\psi},\nabla \xi_4\big)&= - N_\psi \big(f_1-f_2,\xi_4\big),\label{Unique_VF_2}\\
\left< \partial_t \tilde{\chi},\xi_4\right> + \big( M_\chi\nabla \tilde{\chi},\nabla \xi_4\big)&=  -N_\chi \big(\tilde{\chi},\xi_4\big)- P_\chi \big(g_1-g_2,\xi_4\big).\label{Unique_VF_3}
\end{align}
\end{subequations}
By choosing the test functions $\tilde{\mu}(t),\p_t^\alpha\tilde{\phi}(t)+\tilde{\mu}(t),\p_t^\alpha\tilde{\vec{u}}(t)+\tilde{\vec{u}},\tilde{\psi}(t),\tilde{\chi}(t)$ for equations \eqref{Unique_VF_1}-\eqref{Unique_VF_3} respectively and proceeding as in the existence part, we have
\begin{subequations}
\begin{align}
\frac{c}{2}\|\tilde{\phi}(t)\|^2
  + M_0\left(g_{\alpha}*\|\nabla\tilde{\mu}\|^2\right)(t) &\leq C\left(g_{\alpha}*\|\tilde{\phi}\|^2\right)(t) +  \frac{N_\phi}{2}\left(g_{\alpha}*\|f_1-f_2\|^2\right)(t)\nonumber\\&+ \frac{P_\phi}{2}\left(g_{\alpha}*\|g_1-g_2\|^2\right)(t),\label{final:uni_phi}\\
\|\tilde{\mu}\| &\leq  c\|\tilde{\phi}\|+\lambda\|\nabla\cdot\tilde{\vec{u}}\|,\label{final:uni_mu}\\
2G\|\symgrad(\tilde{\vec{u}})\|^2 + \frac{G\nu}{1-2\nu}\|\nabla\cdot \tilde{\vec{u}}\|^2 &\leq \frac{\lambda^2(1-2\nu)}{4G\nu}\|\tilde{\phi}\|,\label{final:uni_u}\\
\frac{1}{2}\|\tilde{\psi}(t)\|^2+ M_0 \int_0^t \|\nabla \tilde{\psi}(s)\|^2\, \textup{d}s &\leq \left(\frac{N_\psi}{2}+1\right)\int_0^t \|\tilde{\psi}(s)\|^2\, \textup{d}s+ \frac{N_\psi}{2}\int_0^t \|f_1-f_2\|^2\, \textup{d}s,\label{final:uni_psi}\\
\frac{1}{2}\|\tilde{\chi}(t)\|^2+ M_0 \int_0^t \|\nabla \tilde{\chi}(s)\|^2\, \textup{d}s &\leq \left(N_\chi+\frac{P_\chi}{2}+\frac{1}{2}\right)\int_0^t \|\tilde{\chi}(s)\|^2\, \textup{d}s+ \frac{P_\chi}{2}\int_0^t \|g_1-g_2\|^2\, \textup{d}s.\label{final:uni_chi}
\end{align}
\end{subequations}
We observe that 
\begin{equation*}
\int_0^t \|\varphi(s)\|^2\, \textup{d}s \leq t^{1-\alpha}\Gamma(\alpha)(g_{\alpha}*\|\varphi\|^2)(t).
\end{equation*}
Adding \eqref{final:uni_phi}, \eqref{final:uni_psi} and \eqref{final:uni_chi}, using the Lipschitz condition on the nonlinear functions and the above estimate, we have
\begin{equation*}\|\tilde{\phi}(t)\|^2 + \|\tilde{\psi}(t)\|^2+ \|\tilde{\chi}(t)\|^2\leq C\left(g_{\alpha}*\left(\|\tilde{\phi}\|^2+\|\tilde{\psi}\|^2+\|\tilde{\chi}\|^2\right)\right)(t). \end{equation*}
Applying the generalized Gronwall-Bellman Lemma \ref{Lem_Gronwall_fractional}, we have $\|\tilde{\phi}(t)\|=\|\tilde{\psi}(t)\|=\|\tilde{\chi}(t)\|=0$, almost everywhere in $[0,T]$. Further, we have from \eqref{final:uni_u} using Korn's inequality \eqref{inequality:Poincare_Korn_Sobolev}, and from \eqref{final:uni_mu} that that $\|\tilde{\vec{u}}\|_{\H^1(\Omega)}=\|\tilde{\mu}\|=0$, almost everywhere in $[0,T]$. \qed
\section{Numerical discretisation}\label{sec:numerical_discretization}

The system~\eqref{System} can be written in the form
\begin{align}\label{eq:odes}
	\p_t^{\vec{\alpha}}(\vec{X}-\vec{X}_0) = \vec{F}(t,\vec{X}(t)),
	\qquad	
	\vec{X} = \left(\phi,\psi,\chi\right),
	\qquad
	\vec{\alpha} = \left(\alpha, 1, 1\right),
\end{align}
with 
\begin{equation}
  	\p_t^{\vec{\alpha}}(\vec{X}-\vec{X}_0) = \left[\begin{array}{c}	
  	\p_t^{\alpha}(\phi-\phi_0) \\
  	\p_t\psi\\
  	\p_t\chi \\
  	\end{array}\right],\quad \vec{F}(t,\vec{X}(t)) = \left[\begin{array}{l}	
		\nabla\cdot (M_\phi(\phi,\psi,\chi) \nabla \mu) + N_\phi f(\phi,\psi) - P_\phi g(\phi,\chi)\\
		\nabla\cdot (M_\psi(\phi,\psi,\chi) \nabla \psi) + S_\psi - N_\psi f(\phi,\psi)\\
		\nabla\cdot (M_\chi(\phi,\psi,\chi) \nabla \chi) - N_\chi \chi + S_\chi - P_\chi g(\phi,\chi)
	\end{array}\right]
\end{equation}
and $\mu=\mu(\phi)$ implicitly defined via \eqref{eqn:mueq}-\eqref{eqn:ueq}. In \eqref{eq:odes}, $\vec{X}_0=(\phi_0,\psi_0,\chi_0)$ is the initial condition.

In our simulations, time discretisation is performed using a first order quadrature scheme, of which we now recall the main features. 


For ease of presentation, we focus on a single scalar equation. Convolution quadrature schemes approximate the Riemann--Liouville derivative $\p_t^{\alpha}\varphi$  \eqref{Def:frac_Caputo} of some function $\varphi=\varphi(t)$ by a discrete convolution, see for instance \cite{Lubich86,Lubich88} for seminal papers and \cite{Zeng13,Jin16} for application to partial differential equations. Let $t_n=n T/N_t$, for $n\in\{0,1,\ldots,N_t\}$, be a subdivision of $[0,T]$ in $N_t$ equispaced time intervals of size $\Delta t := T/N_t$. Assuming $\varphi(0)=0$, we approximate the Riemann-Liouville time derivative of a function $\varphi$ by
\begin{equation}\label{eq:cq}
    \p_t^{\alpha}\varphi \approx \overline{\p}_t^{\alpha}\varphi := \frac{1}{(\Delta t)^{\alpha}}\sum_{j=0}^{N_t} b_j \varphi_{n-j},
\end{equation}
where $\varphi_{n-j}$ is the approximation to $\varphi(t_{n-j})$. The quadrature weights $(b_j)_{j\geq 1}$ are the coefficients in the power series expansion of $\omega^{\alpha}(\zeta)$, with $\omega(\zeta)=\frac{\sigma(1/ \zeta)}{\rho(1/ \zeta)}$ the generating function of a linear multistep method \cite{Lubich88}. When $\varphi(0)=\varphi_0\neq 0$, we have a discretisation to the Caputo derivative by applying \eqref{eq:cq} to $\varphi-\varphi_0$. From \eqref{eq:cq} we see that the memory effect of the fractional time derivative translates, numerically, to the fact that the value of the solution at some time step depends on its values at all previous time steps. In the backward differentiation formula of first order, also known as Gr\"unwald--Letnikov approximation \cite[Section 2.1.2]{Dumitru12}, the quadrature weights are defined recursively by
\begin{equation}\label{eq:cqweights}
    b_0=1,\quad b_j=-\frac{\alpha-j+1}{j}b_{j-1}\quad \text{for }j\geq 1.
\end{equation}
For $\alpha=1$, $b_j=0$ for $j\geq 2$, \eqref{eq:cq} coincides with the implicit Euler method. We refer to \cite[Section 4]{Jin16} for a detailed summary on the derivation of higher order convolution quadrature schemes together with their convergence properties.

Applying the scheme \eqref{eq:cq}--\eqref{eq:cqweights} to \eqref{eq:odes} and denoting by $\phi_n\approx \phi(t_n)$, $\psi_n\approx \psi(t_n)$ $\chi_n\approx \chi(t_n)$ the approximate solutions at time $t_n$, $n=1,\ldots,N_t$, we arrive at the following system of equations:
\begin{subequations}\label{DiscreteSystem}
\revision{\begin{align}
\sum_{j=0}^n b_j (\phi_{n-j}-\phi_{0})&=(\Delta t)^\alpha \nabla\cdot \left(M_\phi \nabla \mu_n\right) + (\Delta t)^\alpha N_\phi f(\phi_{n},\psi_{n}) - (\Delta t)^\alpha P_\phi g(\phi_{n},\chi_{n})\\
\psi_n-\psi_{n-1}&=(\Delta t)\nabla\cdot\left(M_\psi \nabla \psi_n\right) - (\Delta t)N_\psi f(\phi_{n},\psi_{n}) + (\Delta t)S_\psi\\
\chi_n-\chi_{n-1}&=(\Delta t)\nabla\cdot\left(M_\chi \nabla \chi_n\right) - (\Delta t) N_\chi \chi_n - (\Delta t) P_\chi g(\phi_{n},\chi_{n}) + S_\chi,
\end{align}}
where
\begin{align}
    \mu_n&=c\phi_n+\lambda \nabla\cdot \vec{u}_n,\\
    0&= \nabla \cdot \left( 2G \symgrad(\vec{u}_n) + \frac{2G\nu}{1-2\nu}\text{tr}(\symgrad(\vec{u}_n))\mathbb{I} + \lambda \phi_n\mathbb{I}\right).
\end{align}
\end{subequations}

We obtain an algebraic system using a Galerkin approach with linear finite elements. Namely, let $\mathcal{T}^h$ be a quasiuniform family of triangulations of $\Omega$, $h$ denoting the mesh width. For simplicity, we assume that $\cup_{T\in \mathcal{T}^h}\overline{T}=\overline{\Omega}$, which holds in all our numerical experiments. The piecewise linear finite element space is defined as
$$\mathcal{V}^h =\{\varphi\in \C(\overline{\Omega}): \varphi|_{T}\in P_1(T), \forall T\in \mathcal{T}^h\}\subset\H^1(\Omega),$$
where $P_1(T)$ denotes the set of all affine functions on $T$. As in the previous section, we assume that $\tilde{\psi_b}\equiv 0$ and $\tilde{\chi_b}\equiv 0$. We formulate the discrete problem as follows: at the $n$-th time step, find 
$$\phi_n,\mu_n \in \mathcal{V}^h ,\,\,  \vec{u}_n\in \left(\mathcal{V}^h\cap \H^1_{0,\Diru}(\Omega)\right)^d,\,\, \psi_n,\chi_n\in \mathcal{V}^h\cap \H^1_{0,\Dirpsi}(\Omega)$$
such that, for all test functions $\varphi_1,\varphi_4\in \mathcal{V}^h$, $\varphi_2,\varphi_3 \in \mathcal{V}^h\cap \H^1_{0,\Dirpsi}(\Omega)$ and $\vec{\varphi}_5\in \left(\mathcal{V}^h\cap \H^1_{0,\Diru}(\Omega)\right)^d$,
\revision{\begin{subequations}
\begin{align}
(\psi_n,\varphi_2)+ (\Delta t)(M_\psi\nabla \psi_n,\nabla \varphi_2)= &(\psi_{n-1},\varphi_2) - (\Delta t) N_\psi (f(\phi_{n},\psi_{n}),\varphi_2)\nonumber \\ &+ (\Delta t) (S_\psi,\varphi_2) + (\Delta t)(\psi_b,\varphi_2)_{\p\Omega\backslash\Dirpsi}\label{DiscreteSystempsi}\\
(\chi_n,\varphi_3)+ (\Delta t)(M_\chi\nabla \chi_n,\nabla \varphi_3) + (\Delta t) N_\chi (\chi_n,\varphi_3)= &(\chi_{n-1},\varphi_3) - (\Delta t)P_\chi (g(\phi_{n},\chi_{n}),\varphi_3) + (\Delta t)(S_\chi,\varphi_3) \nonumber \\ &+ (\Delta t)(\chi_b,\varphi_2)_{\p\Omega\backslash\Dirpsi}\label{DiscreteSystemchi}\\
b_0(\phi_{n},\varphi_1)+(\Delta t)^\alpha (M_\phi \nabla \mu_n,\nabla \varphi_1)= &- \sum_{j=1}^{n-1} b_j ((\phi_{n-j}-\phi_{0}),\varphi_1) + (\Delta t)^\alpha (b_0\phi_0,\varphi_1)\nonumber\\
& \hspace{-1.3cm} + (\Delta t)^\alpha N_\phi (f(\phi_{n},\psi_{n}),\varphi_1) - (\Delta t)^\alpha P_\phi (g(\phi_{n},\chi_{n}),\varphi_1)\label{DiscreteSystemphi}\\
(\mu_n,\varphi_4)-c(\phi_n,\varphi_4)-\lambda (\nabla\cdot \vec{u}_n,\varphi_4)=& \,0\label{DiscreteSystemmu}\\
2G (\symgrad(\vec{u}_n),\symgrad(\vec{\varphi}_5)) + \frac{2G\nu}{1-2\nu}(\nabla\cdot\vec{u}_n,\nabla\cdot \vec{\varphi}_5) = &- \lambda (\phi_n,\nabla\cdot \vec{\varphi}_5)\label{DiscreteSystemu}
\end{align}
\end{subequations}}
(here we have rearranged the order of the equations for easier explanation in the upcoming remarks). This is a non-linear, coupled algebraic system with unknowns $\phi_n,\mu_n,\vec{u}_n,\psi_n,\chi_n$. At each time step, we solve this system with a fixed point iteration. In our experiments, we set the termination criteria for the latter to be a maximum of $50$ iterations and a maximum tolerance for the relative error between two iterates of TOL=$10^{-6}$. For all experiments in the next section, this tolerance was always reached within less than $10$ fixed point iterations.

The procedure described in this section has been implemented in FEniCS \cite{fenics}, using version 2019.1.0. of the DOLFIN library \cite{dolfin}, to obtain the numerical results shown in the next section.

\begin{remark}
Equations \eqref{DiscreteSystempsi}-\eqref{DiscreteSystemchi} are decoupled from \eqref{DiscreteSystemmu}-\eqref{DiscreteSystemu} and they are coupled with \eqref{DiscreteSystemphi} just through the non-linear terms on the right hand side involving the functions $f$ and $g$, respectively. This can be exploited to improve efficiency when solving the non-linear system: in the spirit of a Gauss-Seidel method, at each fixed point iteration one can first update $\psi_n$ and $\chi_n$ with \eqref{DiscreteSystempsi}-\eqref{DiscreteSystemchi} using the previous iterate of $\phi_n$ on the right hand side, and then insert these updated values in the right hand side of \eqref{DiscreteSystemphi} and solve the last three coupled equations to update $\phi_n,\mu_n$ and $\vec{u}_n$.
\end{remark}

\begin{remark}
When using first order finite elements, further efficiency can be gained by using mass lumping \cite[Sect.11.4]{QuarteroniValli} in \eqref{DiscreteSystemmu}. More precisely, if in the first two terms in \eqref{DiscreteSystemmu} we approximate the mass matrix by its lumped version (which is diagonal), the latter can be inverted cheaply and we can express explicitly the coefficients vector of $\mu_n$ (with respect to the finite element basis functions) in terms of the coefficient vectors for $\phi_n$ and $\vec{u}_n$. Using the resulting expression for the coefficient vector of $\mu_n$ in  \eqref{DiscreteSystemphi} allows to eliminate \eqref{DiscreteSystemmu} from the system of equations and thus to reduce the system's size.
\end{remark}


\section{Numerical simulations}\label{sec:numerical_simulation}
\par In this section, we present the numerical approximations of the variables $\phi,\mu,\vec{u},\psi,\chi$ in the model (\ref{System}) with a two-dimensional domain $\Omega=(0,1)^2$.
In Section \ref{ssec:numexp_fractional}, we study the effects of introducing the fractional time derivative in the reaction--diffusion model, neglecting mechanical effects and in absence of treatment. We next introduce, in Section \ref{ssec:numexp_notreat}, the coupling with mechanical forces, still in absence of treatment. Finally, in Section \ref{ssec:numexp_treat}, we consider the effect of \revision{chemotherapeutic agents}, including a periodic source for the treatment. 

\par Where not otherwise stated, we choose the parameters to have the dimensionless values listed in Table \ref{eq:param_val}. We have $M_{\phi}\ll M_\psi,M_\chi$ because the tumour diffuses at a much lower speed compared to how fast the nutrient and chemotherapeutic agents diffuse. We set $N_\phi\ll N_\psi$ because the nutrient rate of decrease is much faster than the proliferation rate of the tumour due to the nutrient consumption (usually a tumour can at most double its size in one day) and, for similar reasons, we have $P_\phi\ll P_\chi$. The half maximum value $K_{\psi}$ has been chosen to be lower than the maximum concentration reached by the nutrient, in order to observe both effects of low-density limited and maximum capacity-limited growth of the tumour. Accordingly, $K_{\chi}$ has been chosen to be lower than the maximum concentration reached by the chemotherapeutic agents. The degradation rate $N_\chi$ is large compared to the other coefficients in the reaction terms of the equation for the chemotherapeutic substances because the latter degrade quite fast (their half-time is usually a couple of hours). The parameters $c,\lambda,G$ and $\nu<0.5$ have been selected to be in the ranges considered in \cite{lima2016selection}.
 
 \begin{table}[H]
 \setlength{\tabcolsep}{5pt}
 \small
     \centering
     \begin{tabular}{|l|c|c|c|c|c|c|c|c|c|c|c|c|c|c|}
     \hline
     \multirow{2}{3.5em}{Equation} & \multicolumn{14}{c|}{Parameter values}\\
     \cline{2-15}
         &  $M_\phi$ & $N_\phi$ & $K_\psi$ & $P_\phi$ & $c$ & $\lambda$ & $G$ & $\nu$ & $M_\psi$ & $N_\psi$ & $M_\chi$ & $N_\chi$ & $P_{\chi}$ & $K_{\chi}$\\
          \hline
      \eqref{eqn:phieq}     &  0.0001 & 0.6 & 2 & 1.1 & &&&&&&&&&\\
       \eqref{eqn:mueq}    & &&&& 0.4 & 0.002 &&&&&&&&\\
      \eqref{eqn:ueq}     & &&&&  & & 0.4615 & 0.3 &&&&&&\\
     \eqref{eqn:psieq}      & &&&&  & & & & 1 & 40 &&&&\\
      \eqref{eqn:chieq}     & &&&&  & & & &  &  & 1 & 3 & 30 & 0.6\\
          \hline
     \end{tabular}
     \caption{Parameter values used in the simulations of Section \ref{sec:numerical_simulation}, unless otherwise stated.}\label{eq:param_val}
 \end{table}

In the simulations, we have set $\Delta t=1/15$ for the time stepping, and we have discretised the domain $\Omega$ using a regular, triangular mesh with mesh size $h=\sqrt{2}/150$ for all simulations but those in Figures \ref{Fig_RD_radius} and \ref{Fig_RD_interface}, where we have used a finer mesh with $h=\sqrt{2}/200$ to have a more precise computation of the radius of the tumour.

We refer to the quantities $\int_{\Omega}\phi(\vec{x},t)\textrm{d}\vec{x}$, $\int_{\Omega}\psi(\vec{x},t)\textrm{d}\vec{x}$ and $\int_{\Omega}\chi(\vec{x},t)\textrm{d}\vec{x}$ as the tumour, nutrient and chemotherapy mass, respectively. Since $\phi$ is a volume fraction, the mass of the tumour is technically given by $\int_{\Omega}\rho\phi(\vec{x},t)\textrm{d}\vec{x}$, but, since we assume $\rho$ to be constant, $\int_{\Omega}\phi(\vec{x},t)\textrm{d}\vec{x}$ is the mass up to rescaling.


\subsection{Reaction-diffusion system without treatment}\label{ssec:numexp_fractional}
\par The goal of this subsection is to show the basic effects of introducing a fractional time derivative and the new modelling possibilities that it offers. 
For this, we consider the reaction--diffusion model \eqref{eqn:phieq}, \eqref{eqn:mueq} and \eqref{eqn:psieq} with the parameters $\lambda=0, P_\phi=0$, in which case we have two variables $\phi$ and $\psi$. For both $\phi$ and $\psi$ we impose homogeneous Neumann boundary conditions, and a constant nutrient is supplied over the whole domain by setting $S_{\psi}\equiv$0.5. 

We first consider the evolution of a circular tumour and then of a tumour concentration having, initially, two disconnected components.

For the circular tumour, we consider the initial conditions
\begin{equation}\label{eq:ic_circularplateau}
    \phi_0(\vec{x}) = \begin{cases}
    1 & \mbox{if}\quad  \lVert \vec{x}-\vec{c}\rVert \leq b,\\
    \text{exp}\left(1- \frac{(a-b)^2}{(a-b)^2- ( \lVert \vec{x}-\vec{c}\rVert-b)^2}\right)& \mbox{if}\quad b<  \lVert \vec{x}-\vec{c}\rVert\leq a,\\
    0&\mbox{otherwise},
    \end{cases}
\end{equation}
and $\psi_0\equiv 0$. We first set $\vec{c}=(0.5,0.5)$, $a=0.22$ and $b=0.05$. A cross section of \eqref{eq:ic_circularplateau} along the $x$-axis is depicted in the left plot of Figure \ref{Fig_RD_interface}, dashed line.

Experiments with \textsl{in vitro} cell cultures as well as \textsl{in vivo} data \cite{jiang2014anomalous} have shown that the growth of the tumour size over time can have different power law exponents depending on the type of cells and the surrounding environment. For this reason, we have tracked the radius of the tumour over time for different values of $\alpha$, whose results are shown in Figure \ref{Fig_RD_radius}, left plot. Here we have defined the radius of the tumour as 
\begin{equation*}
    R(t) = \text{argmax}_{\lVert \vec{x}-\vec{c}\rVert, \vec{x}\in\Omega}\; \left\{\phi(\vec{x},t)\geq R_{thresh}\right\},
\end{equation*}
with $R_{thresh}$ a threshold value that we have set to $10^{-3}$. The left plot in Figure \ref{Fig_RD_radius} clearly shows that, by varying $\alpha$, the radius grows with different power laws. This means that, if one is only interested in predicting the tumour size at a certain time, then a reaction-diffusion model with properly tuned diffusion and reaction coefficients would be sufficient. However, if one is interested in the dynamics,  then the model with fractional exponent can express behaviours that cannot be modelled with an integer order model with time-constant coefficients. To show this, we have taken $\alpha=1$ and tuned $M_{\phi}$ and $N_{\phi}$ in order to obtain the same radius at $t=30$ as the one obtained with $\alpha=0.5$ and parameters as in Table \ref{eq:param_val}. The results are shown in the right plot in Figure \ref{Fig_RD_radius}, where, for $\alpha=1$, we have taken $M_\phi=0.0001/1.803$ and $N_\phi=0.6/1.803$. We see that, with the new values for $M_\phi$ and $N_\phi$, the model with integer order time derivative can predict the same radius for the tumour at $t=30$ as $\alpha=0.5$ and the parameters in Table \ref{eq:param_val}. However, the dynamics is quite different in the two cases.

\begin{figure}[!htb]\centering
	\includegraphics[width=0.45\textwidth]{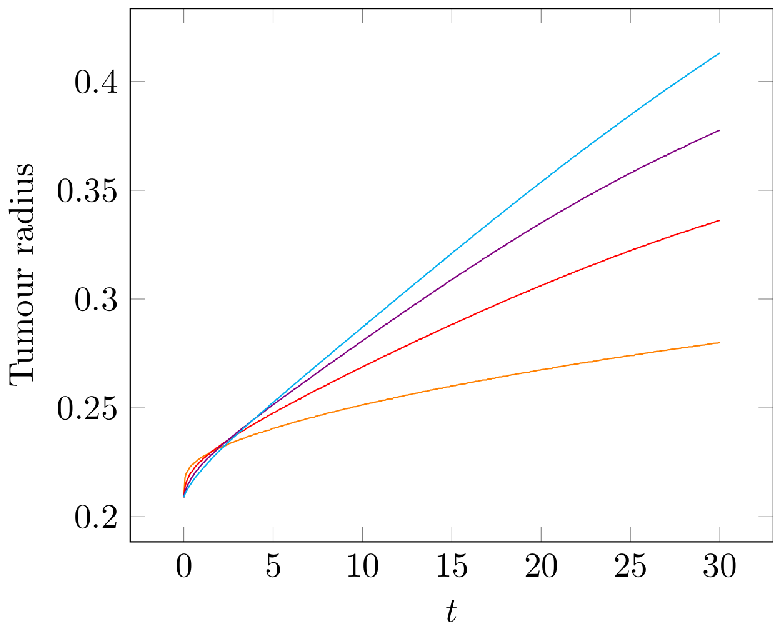}\qquad
	\includegraphics[width=0.45\textwidth]{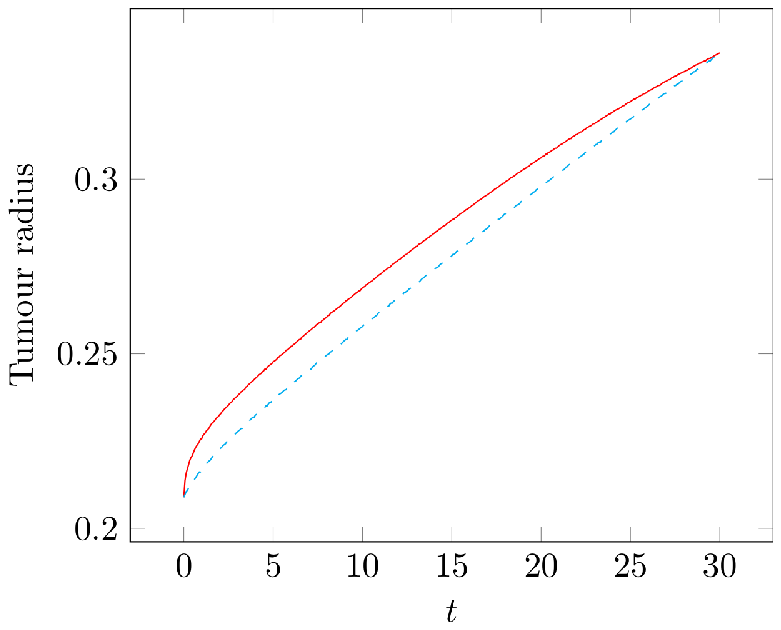}
	\caption[Tumour evolution when starting with circular initial condition]{\revision{Reaction-diffusion model and circular initial condition \eqref{eq:ic_circularplateau} with $a=0.22$ and $b=0.05$. Left: evolution of radius of the tumour over time with parameters as in Table \ref{eq:param_val} and $\alpha=0.25$(\textcolor{orange}{-----}), $\alpha=0.5$ (\textcolor{red}{-----}),  $\alpha=0.75$ (\textcolor{violet}{-----}) and $\alpha=1$ (\textcolor{cyan}{-----}).  Right: evolution of the radius over time for parameters as in Table \ref{eq:param_val} and $\alpha=0.5$ (\textcolor{red}{-----}) - same line as in the left plot - and $M_\phi=0.0001/1.803$, $N_\phi=0.6/1.803$ and $\alpha=1$ (\textcolor{cyan}{-\hspace{0.05cm}-\hspace{0.05cm}-})}}
	\label{Fig_RD_radius}
\end{figure}

The left plot in Figure \ref{Fig_RD_interface} shows a cross section along the $x$ axis of the density of the tumour at final time $T=30$. For reference, the dashed line is the initial condition. We observe that the spreading of the tumour is very sensitive to $\alpha$, and when $\alpha$ is large, the spreading is faster. When $\alpha$ is smaller, not only the tumour grows more slowly, but the interface between the tumour and the surrounding tissue is less sharp. The initial condition \eqref{eq:ic_circularplateau} has a plateau around the centre of the domain where the tumour density is $1$. A question that can arise is what happens when we start with an initial condition which has no plateau. To test this, we have run a second experiment with $b=0$ and $a=0.2$. The cross section of the tumour density at $T=30$ is shown in the right plot of Figure \ref{Fig_RD_interface}, where again the dashed line refers to the initial condition. Here we can observe that, for $\alpha<1$, the tumour grows over time without forming a plateau in the centre, which happens for $\alpha=1$. Such different behaviour is not surprising if one notices that varying the fractional exponent is not a simple re-scaling of the time variable and it introduces instead different nonlinear behaviours in the model.

\begin{figure}[!htb]\centering
\includegraphics[width=0.45\textwidth]{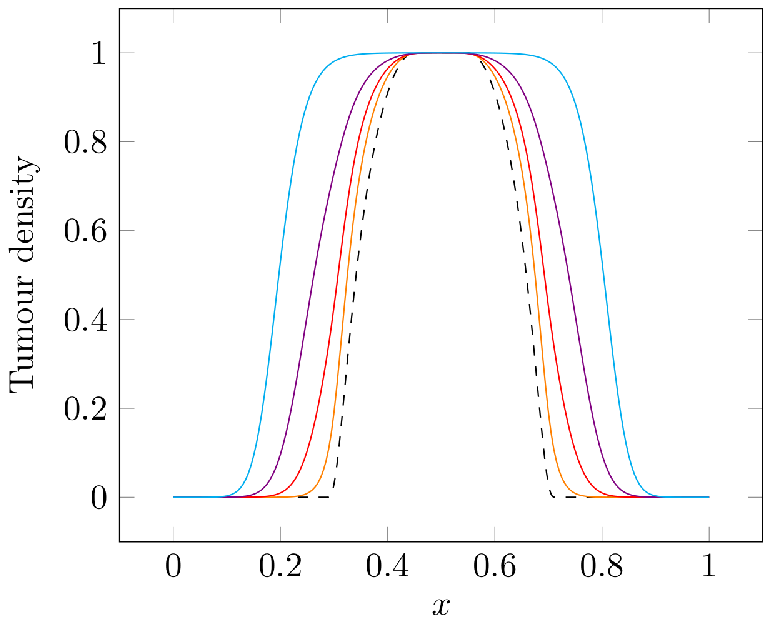}\qquad
\includegraphics[width=0.45\textwidth]{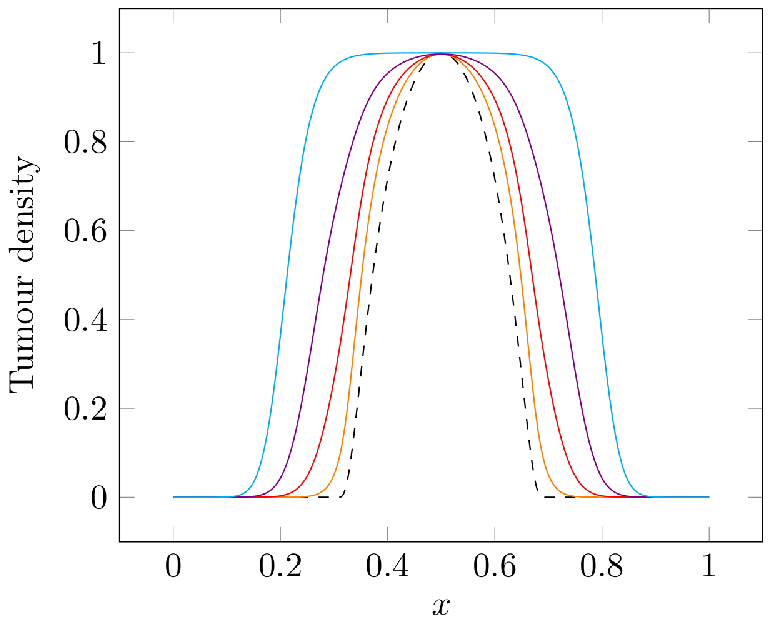}
\caption[Tumour evolution when starting with circular initial condition]{\revision{Reaction-diffusion model and circular initial condition. Cross section of the tumour density along the $x$-axis at $T=30$ for $\alpha=0.25$ (\textcolor{orange}{-----}), $\alpha=0.5$ (\textcolor{red}{-----}), $\alpha=0.75$ (\textcolor{violet}{-----}) and $\alpha=1$ (\textcolor{cyan}{-----}). Left: results for initial condition \eqref{eq:ic_circularplateau} with $a=0.22$ and $b=0.05$. Right: results for initial condition \eqref{eq:ic_circularplateau} with $a=0.2$ and $b=0$. In both plots, the dashed line depicts the initial condition.}}
\label{Fig_RD_interface}
\end{figure}


\par We now consider an initial condition with two disconnected components, namely, two initially separated elliptical tumour masses:
\revision{\begin{equation}\label{eq:ic_twoellipses}
\phi_0(\vec{x}) = \begin{cases}
\text{exp}\left(1- \frac{a^2}{a^2-\lVert A(\vec{x}-\vec{c}_1)\rVert^2}\right) & \mbox{if}\quad \lVert A(\vec{x}-\vec{c}_1)\rVert \leq a, \\[0.05em]
\text{exp}\left(1- \frac{a^2}{a^2-\lVert A(\vec{x}-\vec{c}_2)\rVert^2}\right) & \mbox{if}\quad \lVert A(\vec{x}-\vec{c}_2)\rVert \leq a, \\
0 & \mbox{otherwise},
\end{cases}
\end{equation}}
with $A=\begin{pmatrix} 1&0\\ 0 & \gamma \end{pmatrix}$, $\gamma=\sqrt{5}$, $\vec{c}_1=(0.5,0.6)$, $\vec{c}_2=(0.5,0.4)$ and $a=0.2$. This initial condition is depicted in the first column of Figure \ref{Fig_RD_coalescence}. As before, we take $\psi_0\equiv 0$. The evolutions of the tumour for four values of $\alpha$ are depicted in Figure  \ref{Fig_RD_coalescence}. There, we can see that different values of the fractional exponent affect the coalescence speed of the two ellipses: the smaller the $\alpha$, the lower the speed at which they merge. Moreover, as in the circular tumour case, we see that, the larger the $\alpha$, the sharper the interface between the tumour and the healthy tissue (compare for instance $\alpha=1$ at $t=9$ with $\alpha=0.75$ at $t=15$ and $t=20$). Similar observations were made in \cite{Liu18} in a fractional phase-field model for porous media applications. 

\begin{figure}[!htb]
	\phantom{x}\hspace{-0.4cm}\begin{tabular}{ccccc}
		& \phantom{x}\hspace{0.4cm}$t=0$ & \phantom{x}\hspace{0.4cm}$t=9$ & \phantom{x}\hspace{0.4cm}$t=15$ & \phantom{x}\hspace{0.4cm}$t=20$ \\[0.8em]
		$\alpha=0.25$& \parbox[c]{5em}{\includegraphics[scale=0.08]{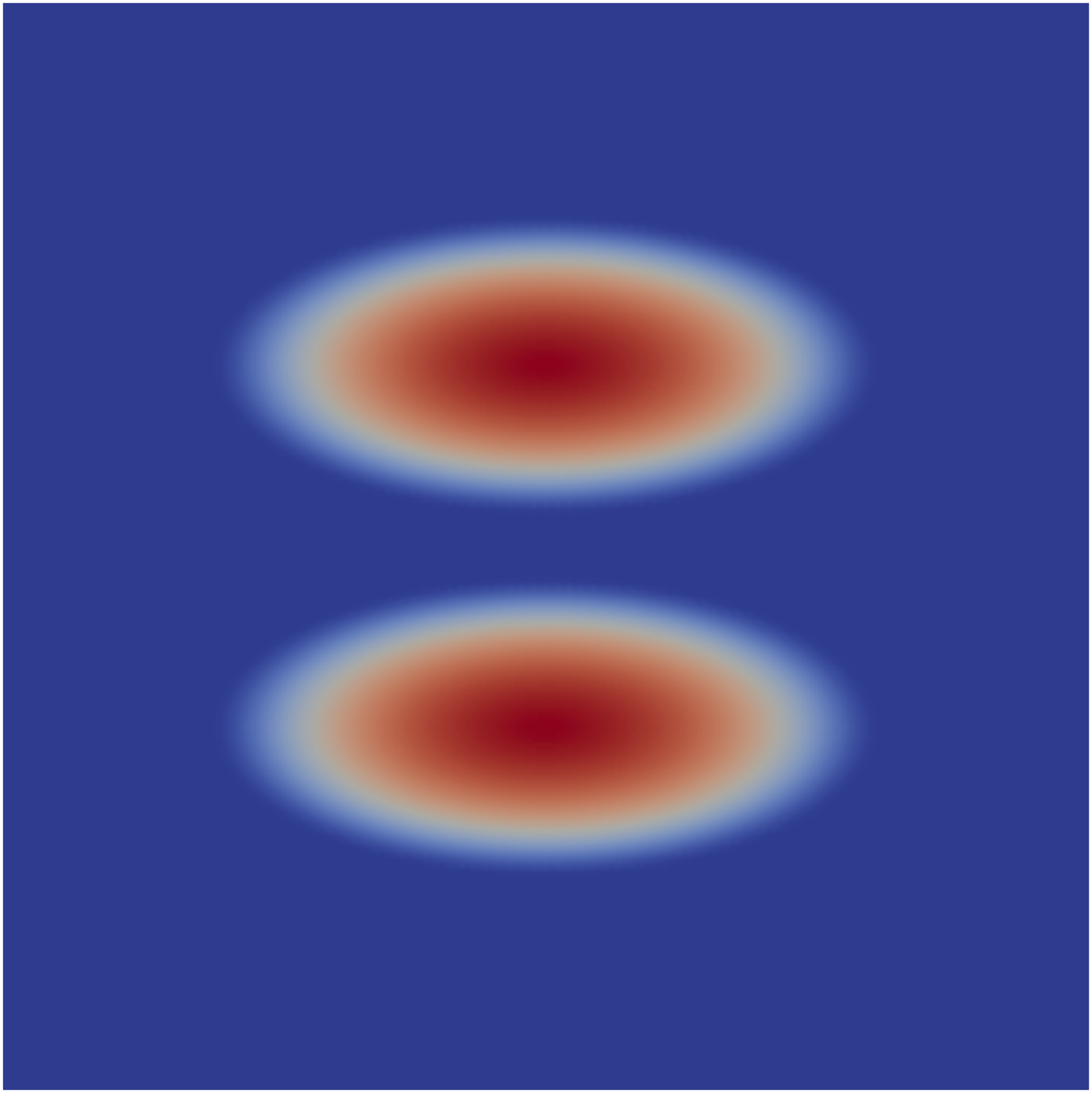}}
		& \parbox[c]{5em}{\includegraphics[scale=0.08]{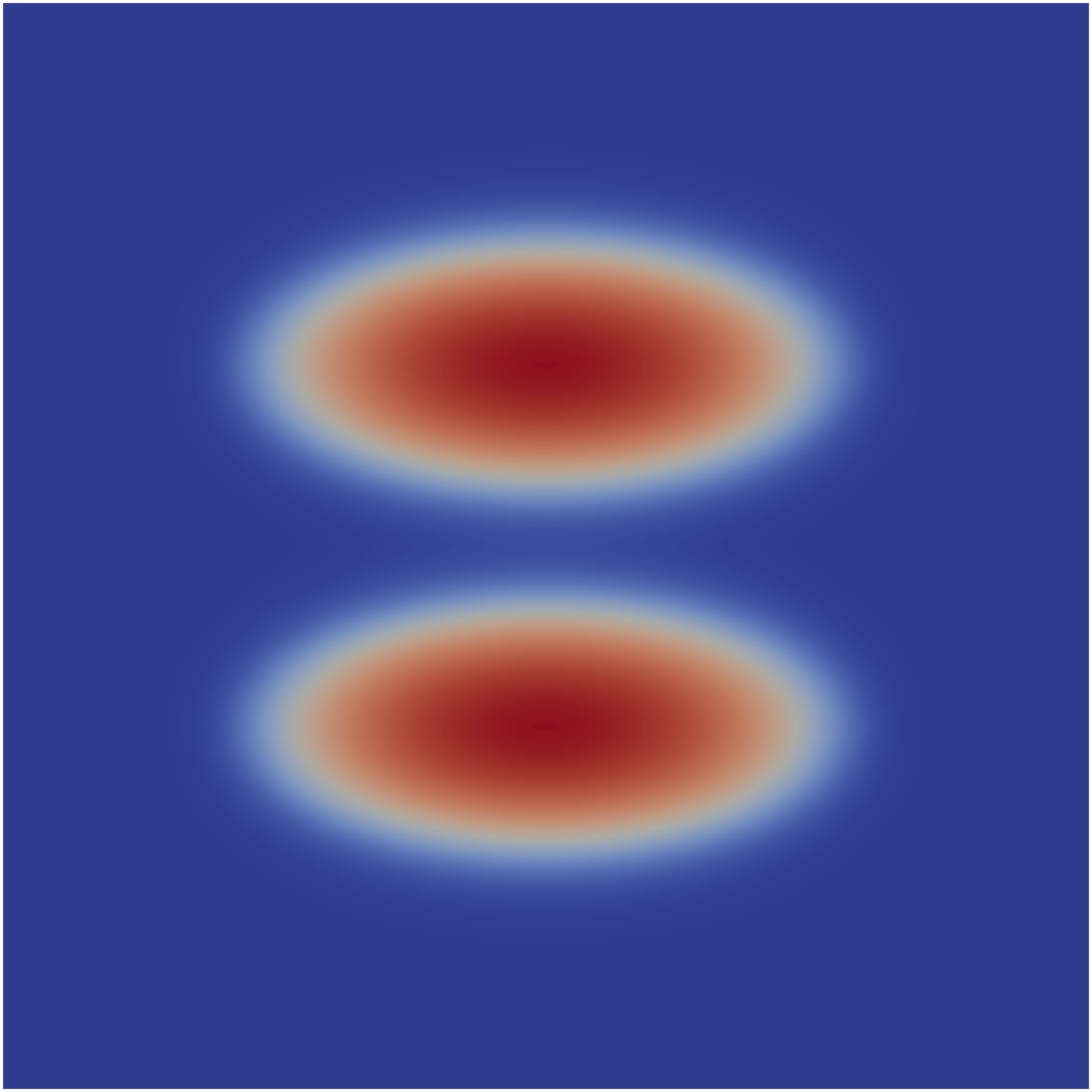}}
		& \parbox[c]{5em}{\includegraphics[scale=0.08]{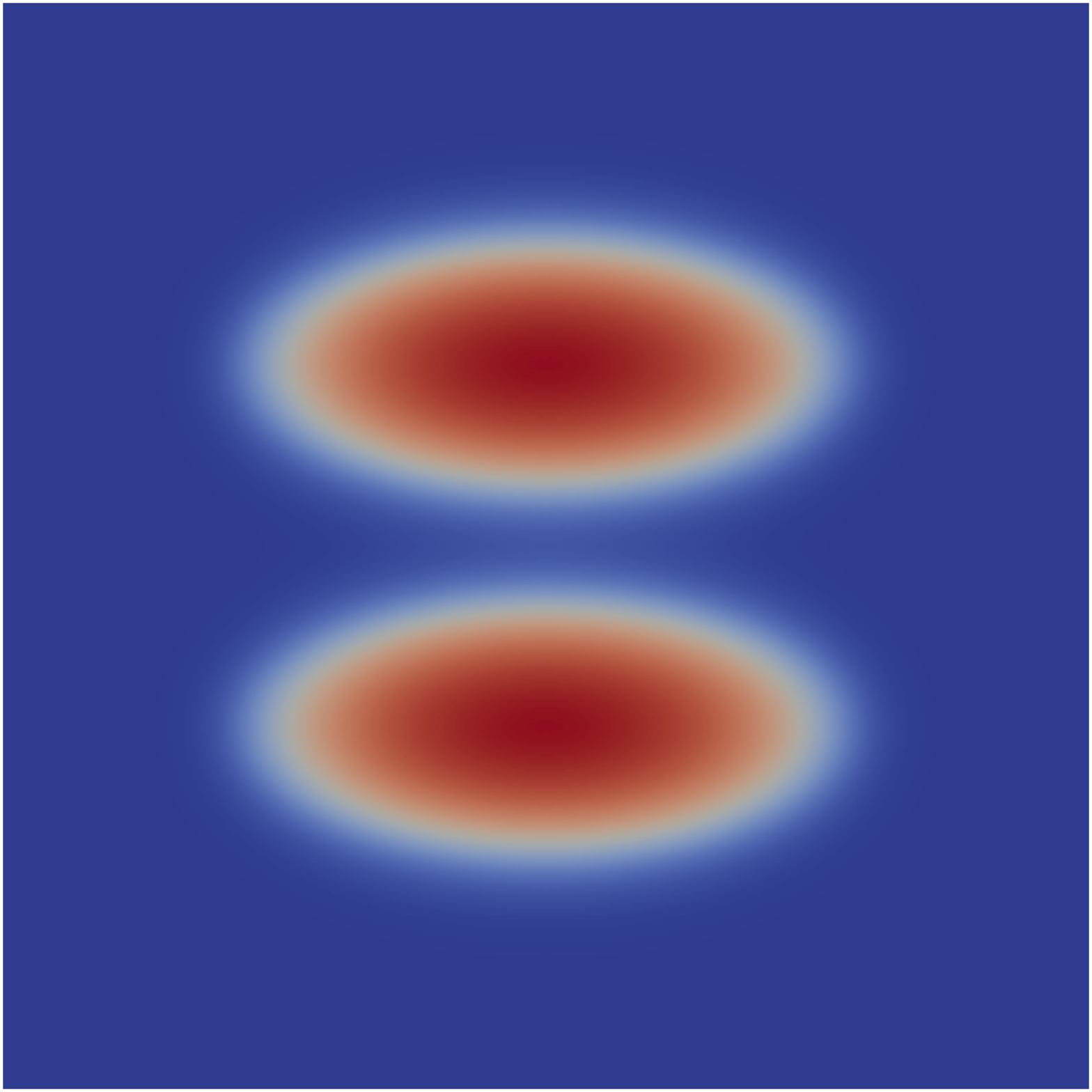}}
		& \parbox[c]{5em}{\includegraphics[scale=0.08]{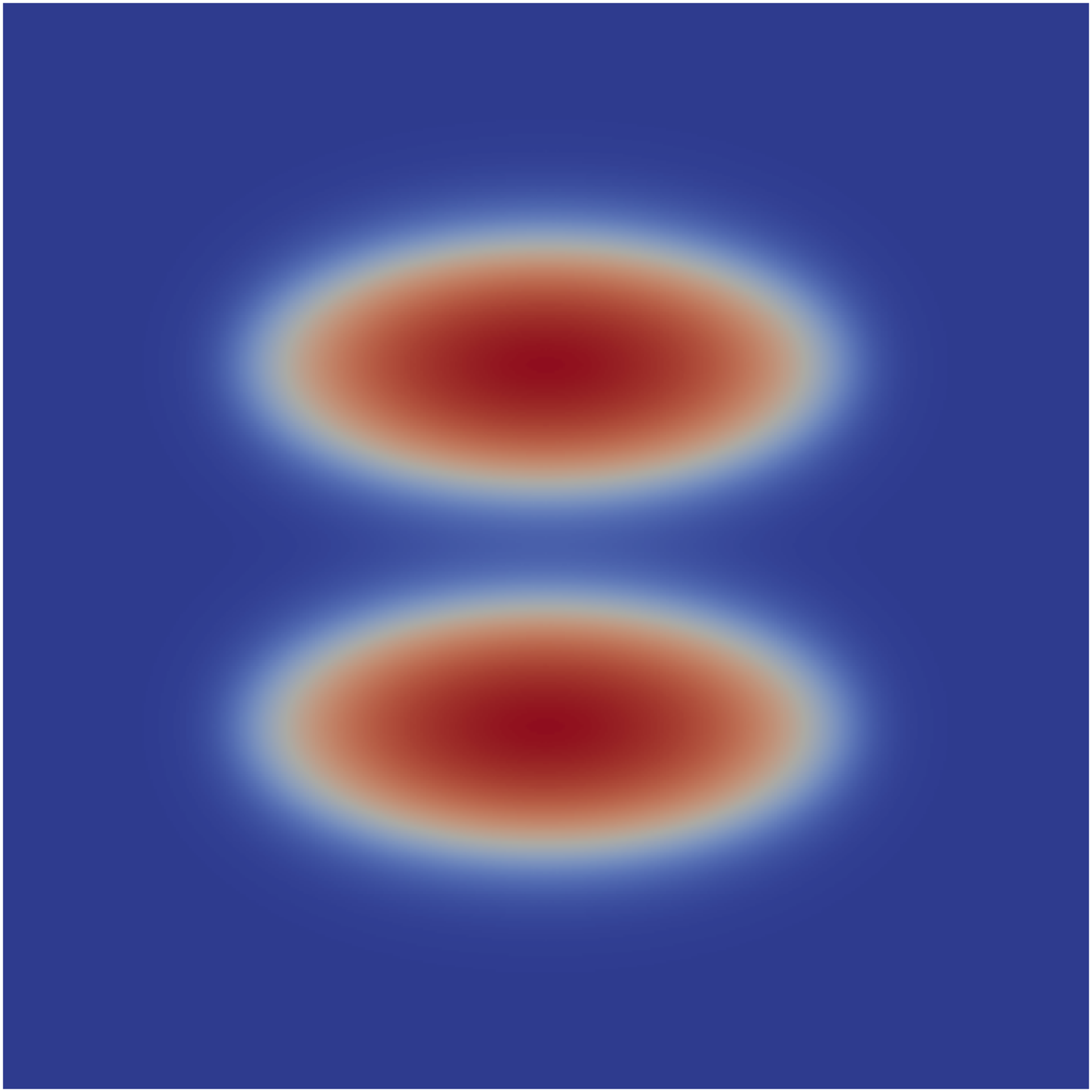}}\\[4em]
		$\alpha=0.5$ & \parbox[c]{5em}{\includegraphics[scale=0.08]{initialcondition_zoomed}}
		& \parbox[c]{5em}{\includegraphics[scale=0.08]{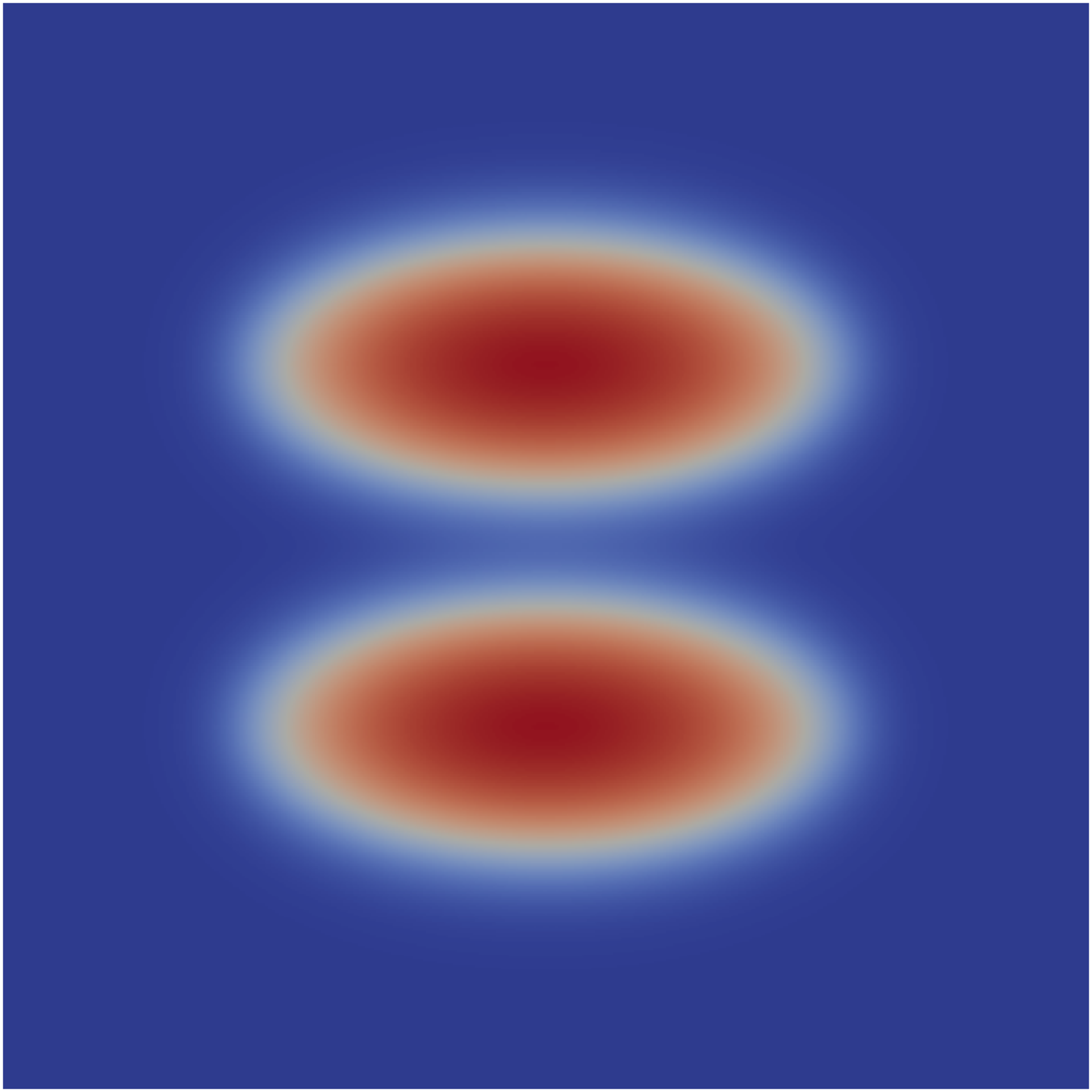}}
		& \parbox[c]{5em}{\includegraphics[scale=0.08]{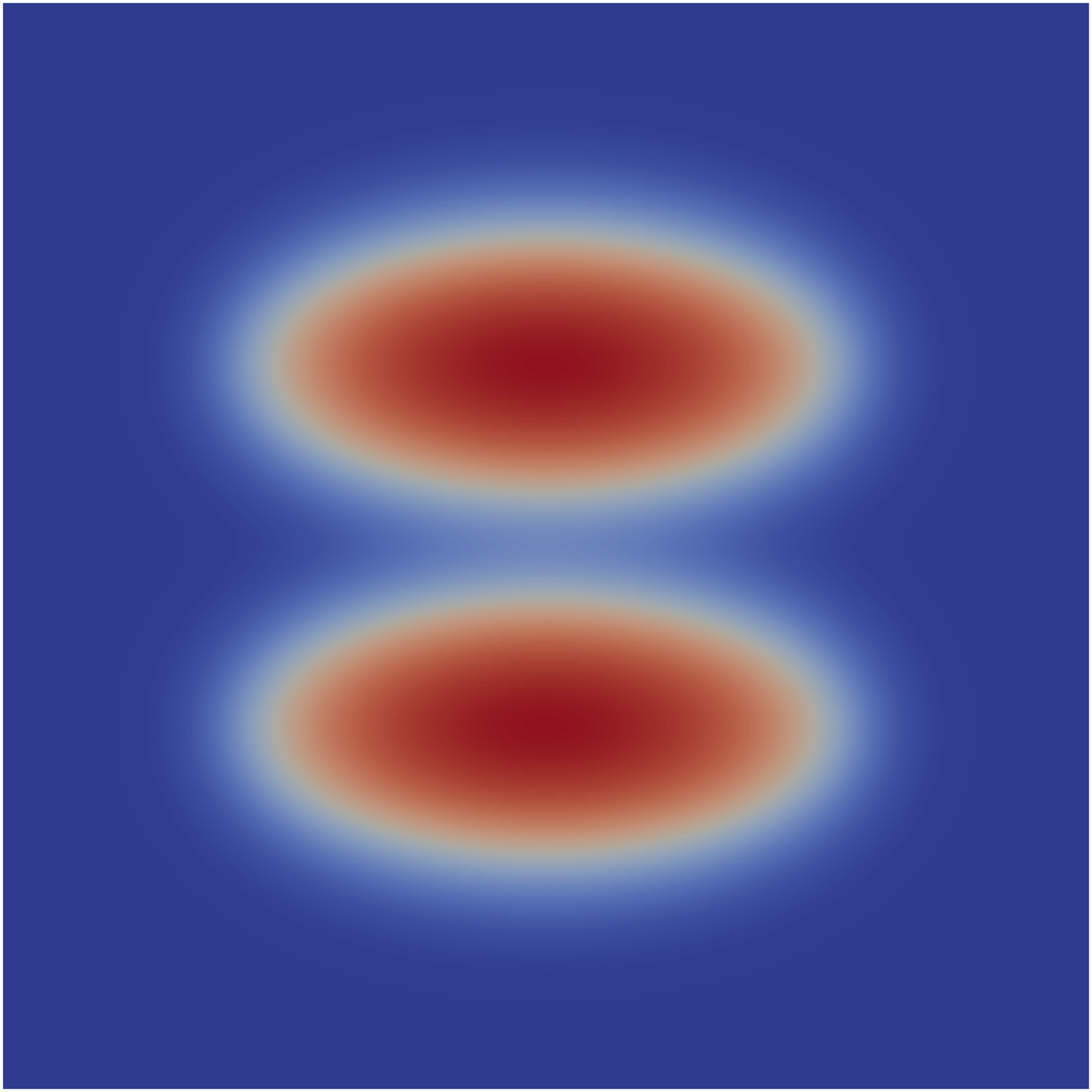}}
		& \parbox[c]{5em}{\includegraphics[scale=0.08]{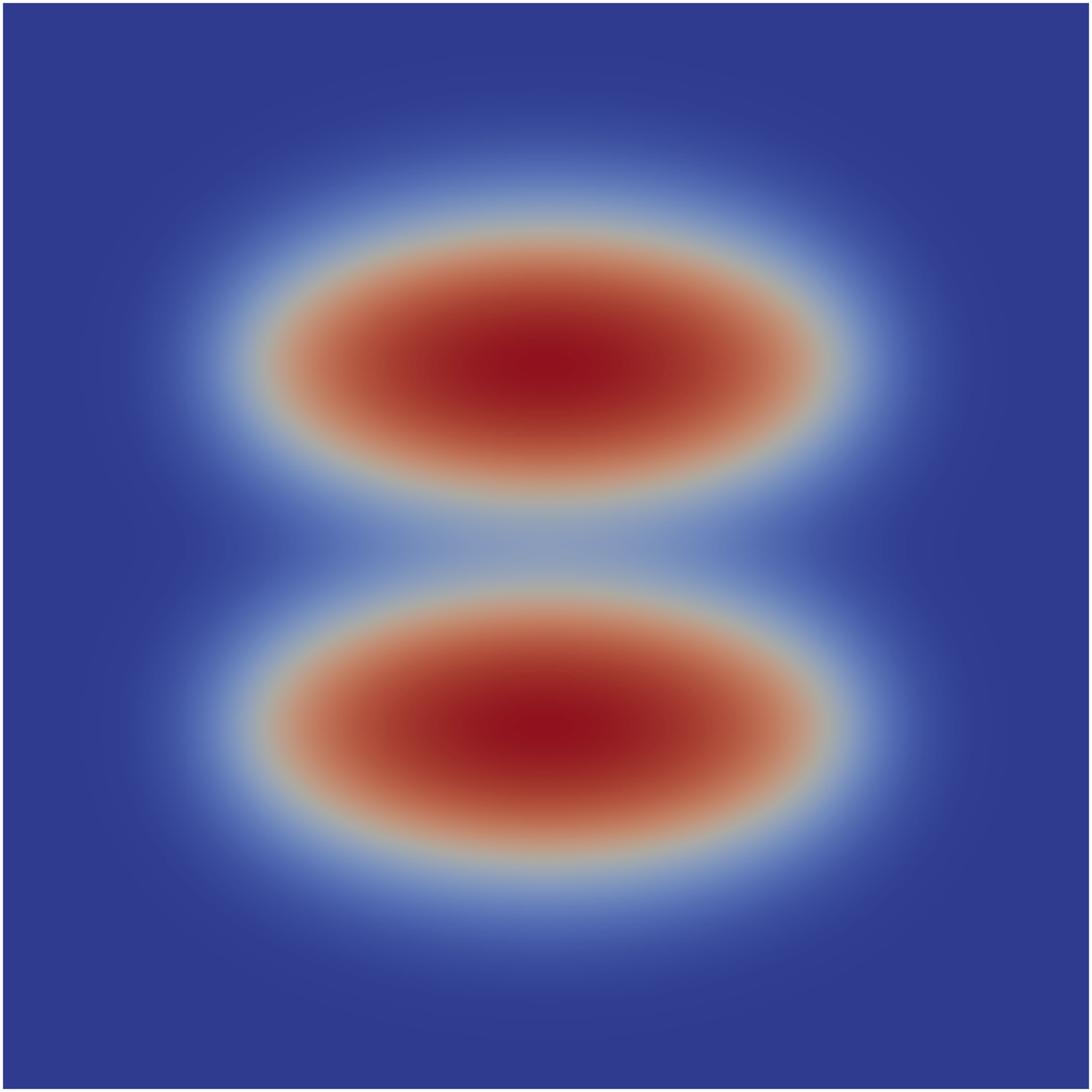}}\\[4em]
		$\alpha=0.75$ & \parbox[c]{5em}{\includegraphics[scale=0.08]{initialcondition_zoomed}}
		& \parbox[c]{5em}{\includegraphics[scale=0.08]{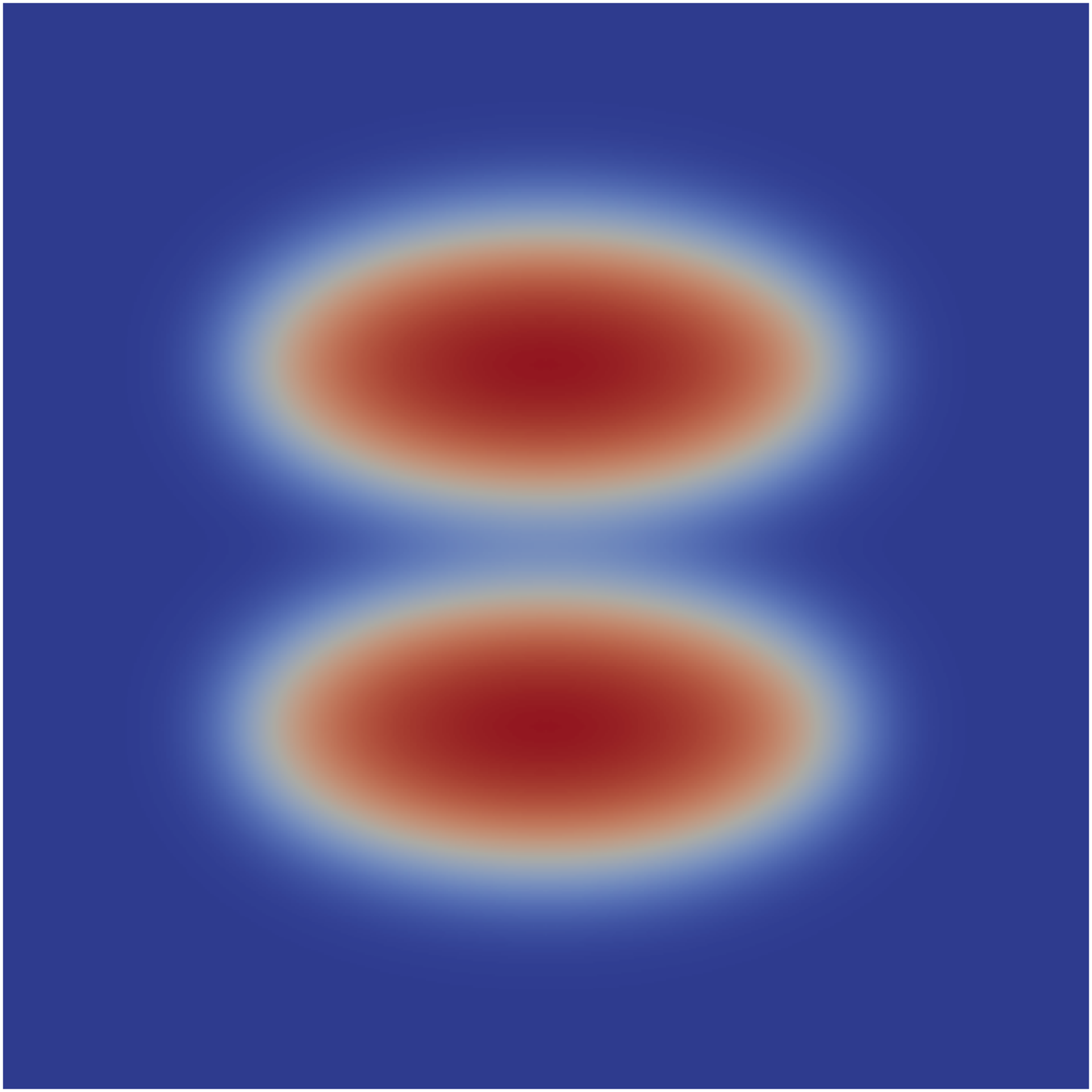}}
		& \parbox[c]{5em}{\includegraphics[scale=0.08]{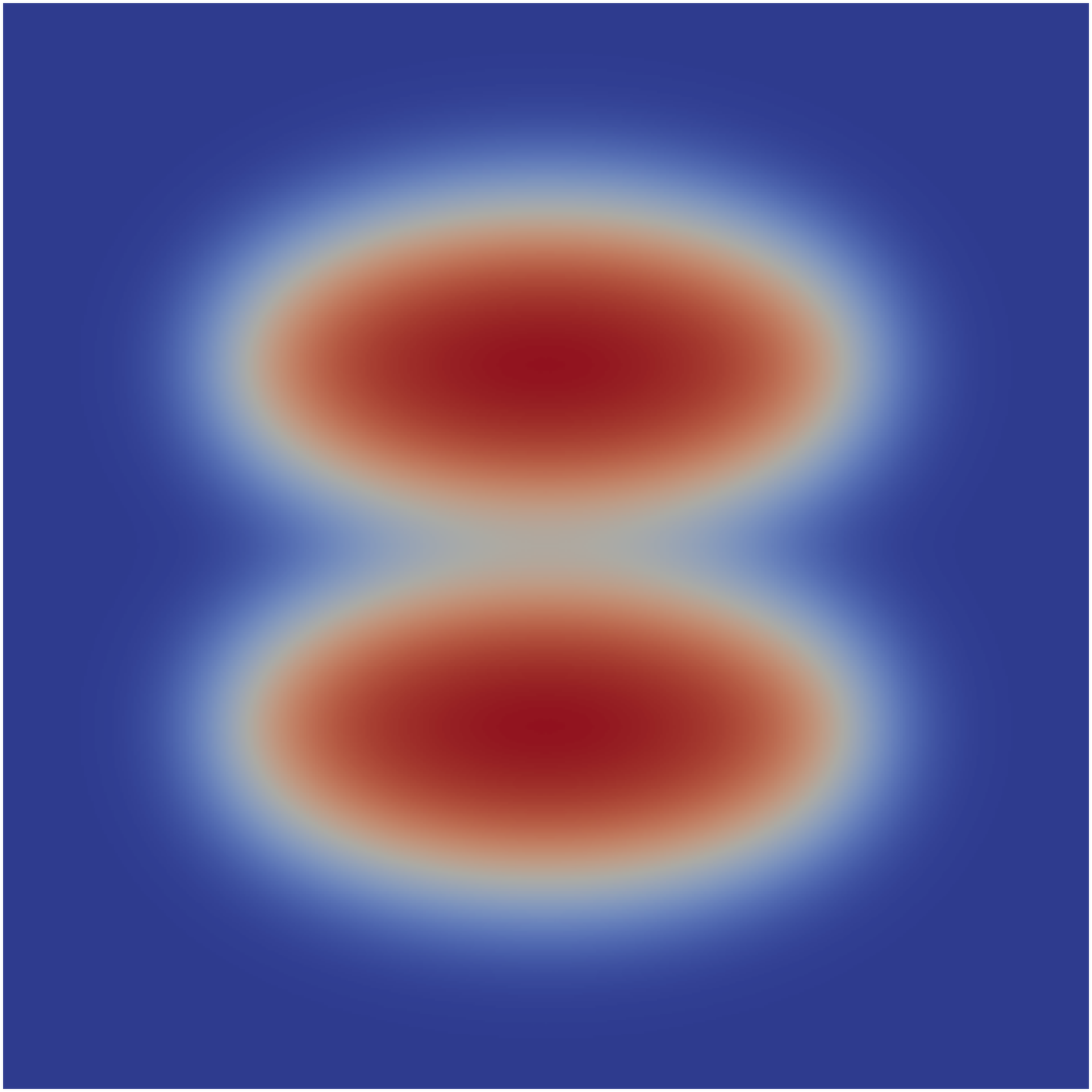}}
		& \parbox[c]{5em}{\includegraphics[scale=0.08]{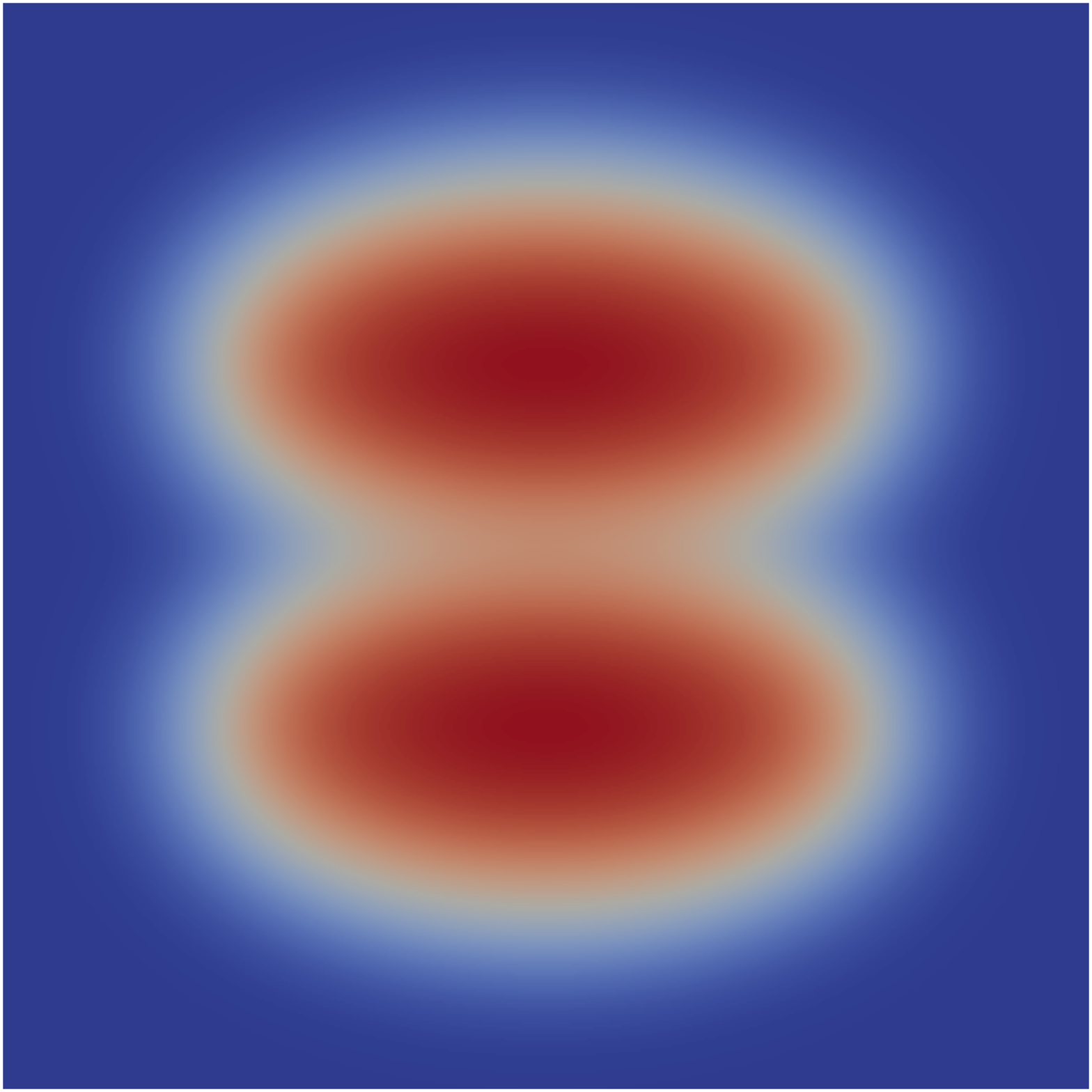}}\\[4em]
		$\alpha=1$ & \parbox[c]{5em}{\includegraphics[scale=0.08]{initialcondition_zoomed}}
		& \parbox[c]{5em}{\includegraphics[scale=0.08]{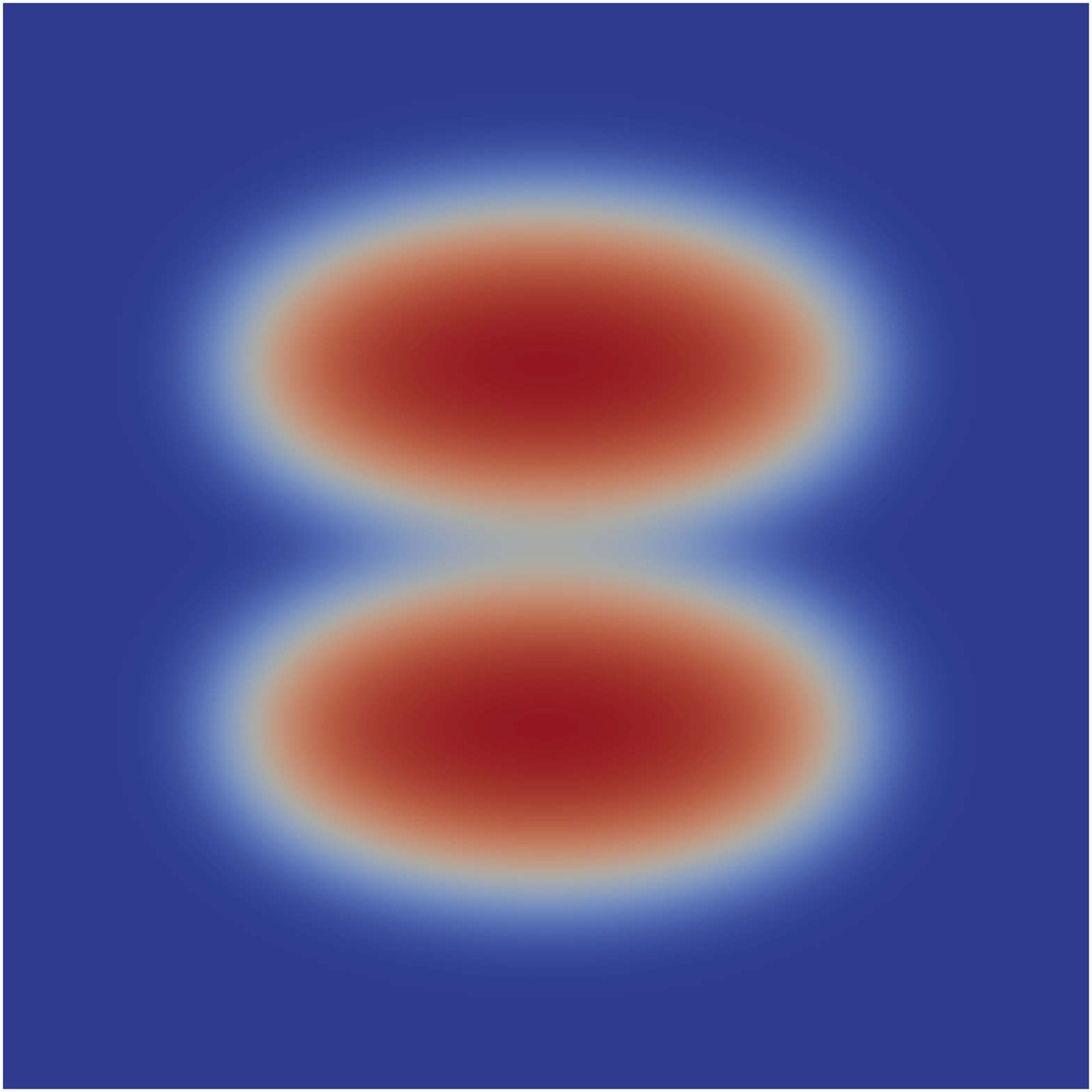}}
		& \parbox[c]{5em}{\includegraphics[scale=0.08]{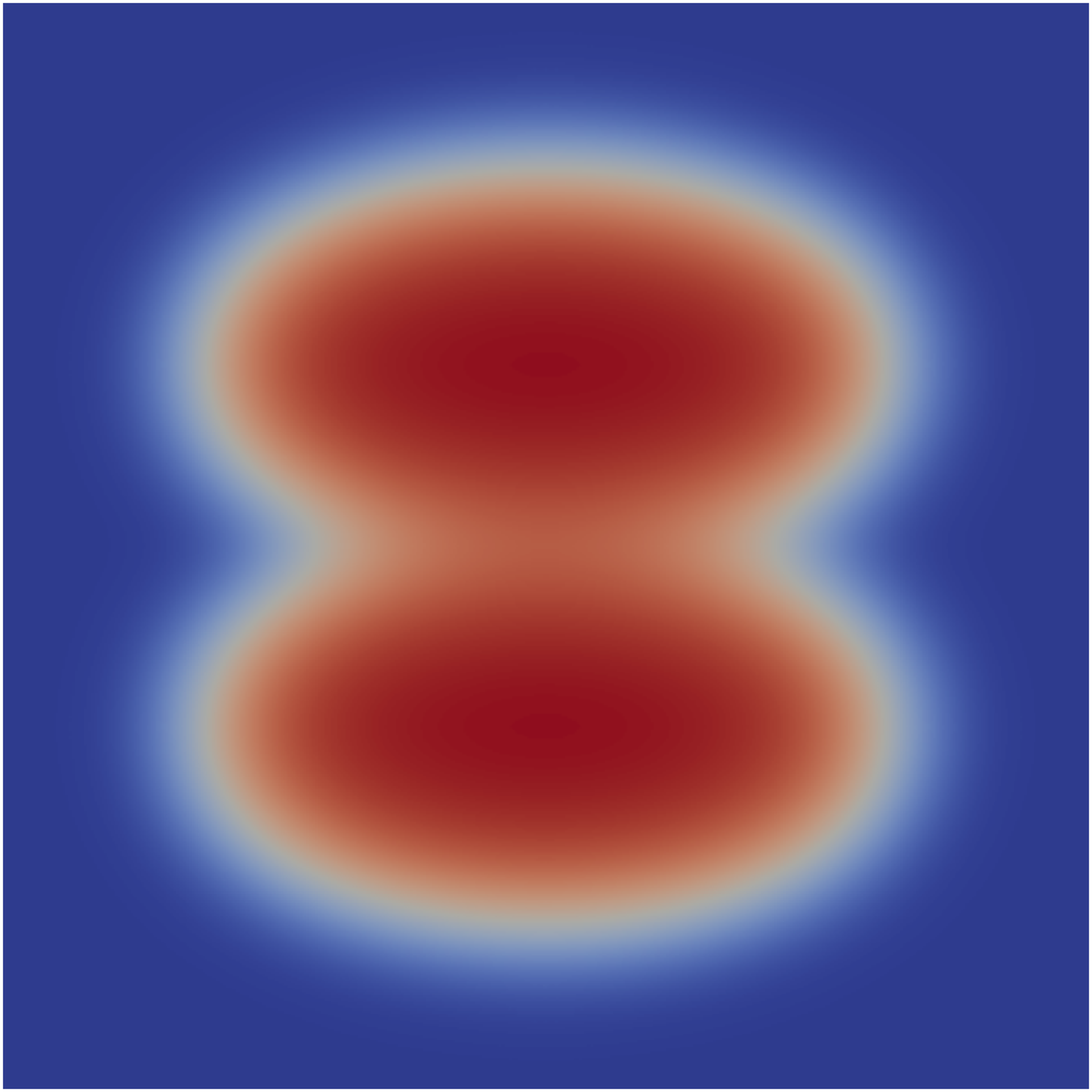}}
		& \parbox[c]{5em}{\includegraphics[scale=0.08]{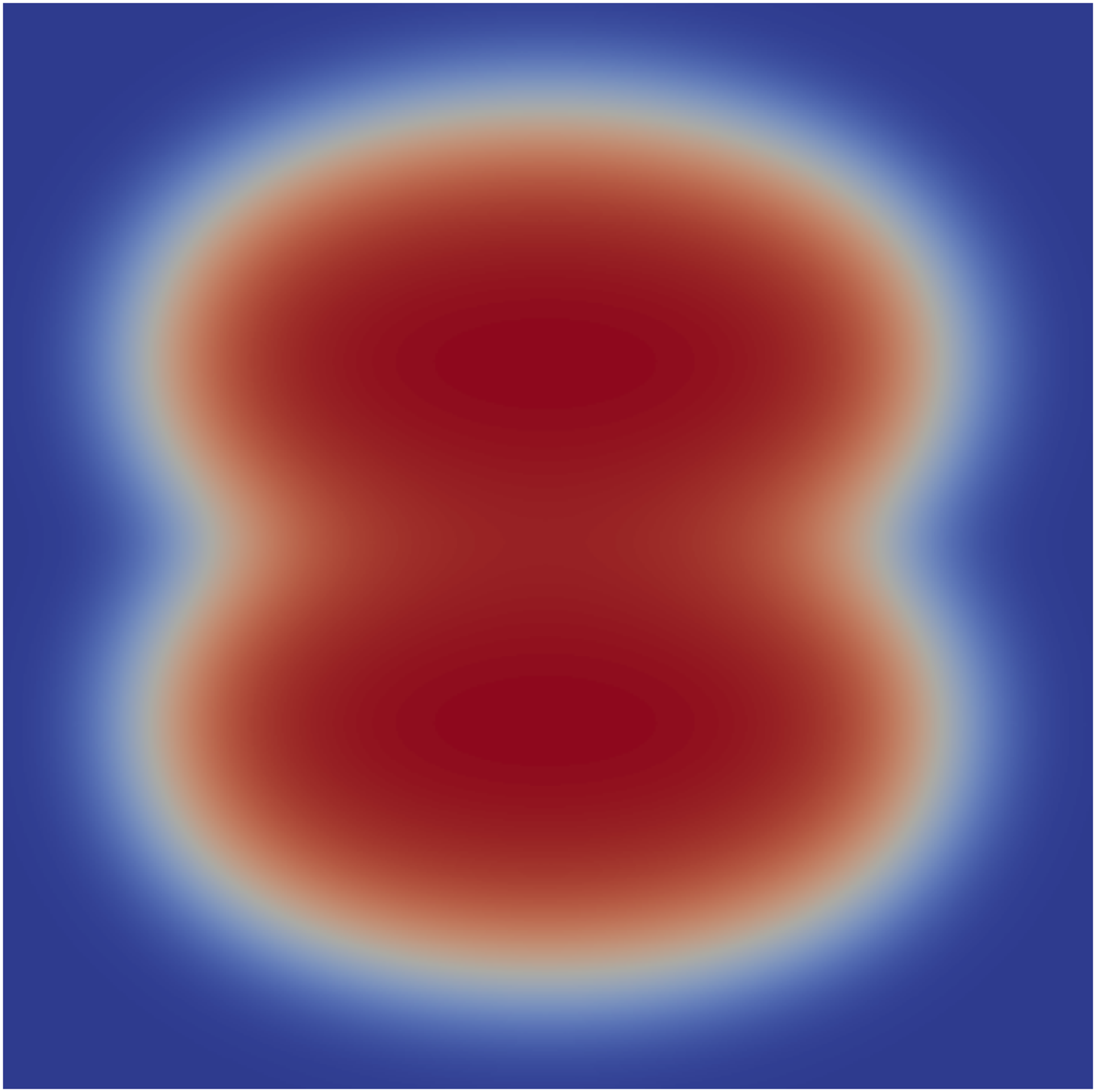}}\\
	\end{tabular} 
	\caption{Reaction-diffusion model with two ellipses as initial condition. Zoom in $[0.2,0.8]^2$ of tumour volume fraction for different values of $\alpha$ at different times using a constant nutrient source $S_\psi\equiv 0.5$. The range for the colorbar has been fixed to be $[0,1]$ for all plots.}
	\label{Fig_RD_coalescence}
\end{figure}

In the experiments shown so far, we have used a time-constant supply of nutrient, meaning a strictly increasing tumour mass over time. To observe the effects of varying $\alpha$ in a more dynamic setting, we still consider the initial condition \eqref{eq:ic_twoellipses} but now a periodic source of nutrient, i.e. we set:
\begin{equation}
    S_\psi(t)= \begin{cases}
    0.5 & \mbox{ if } \quad 1< t\leq 3\mbox{ or }5< t\leq 7\mbox{ or }9< t\leq 10,\\
    0 & \mbox{ otherwise}.
    \end{cases}\label{eq:BC_chi}
\end{equation}
The mass of the tumour and nutrient up to $T=10$ are depicted in Figure \ref{Fig_RD_mass_tumour_nutrient}, left and right plot, respectively. Regarding the tumour evolution, we observe conservation of mass up to $t=1$, because of no nutrient supply and homogeneous Neumann boundary conditions. For $1<t\leq 3$, we can see that, in the beginning, the tumour grows faster over time when $\alpha$ is smaller, and it grows faster for $\alpha$ larger as time passes. We notice that, when the nutrient is again not provided, for $3<t\leq 5$ (and for $7<t\leq 9$), the tumour keeps growing nevertheless, because there is still some nutrient in the domain, and it grows faster for larger $\alpha$. Regarding the nutrient, we see that, for $1<t\leq 3$, the nutrient consumption is approximately the same for all values of $\alpha$, while, for longer times, the larger the $\alpha$, the larger the nutrient uptake. This can be expected from the fact, that, for later times, the tumour mass is larger for $\alpha$ large and therefore it consumes more nutrient.

\begin{figure}[H]\centering
	\includegraphics[width=0.45\textwidth]{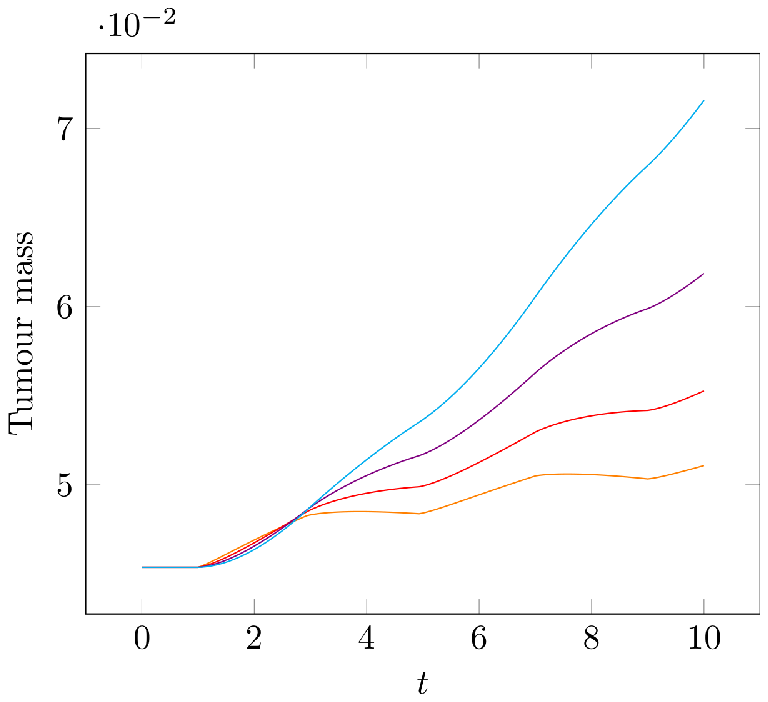}\qquad
	\includegraphics[width=0.468\textwidth]{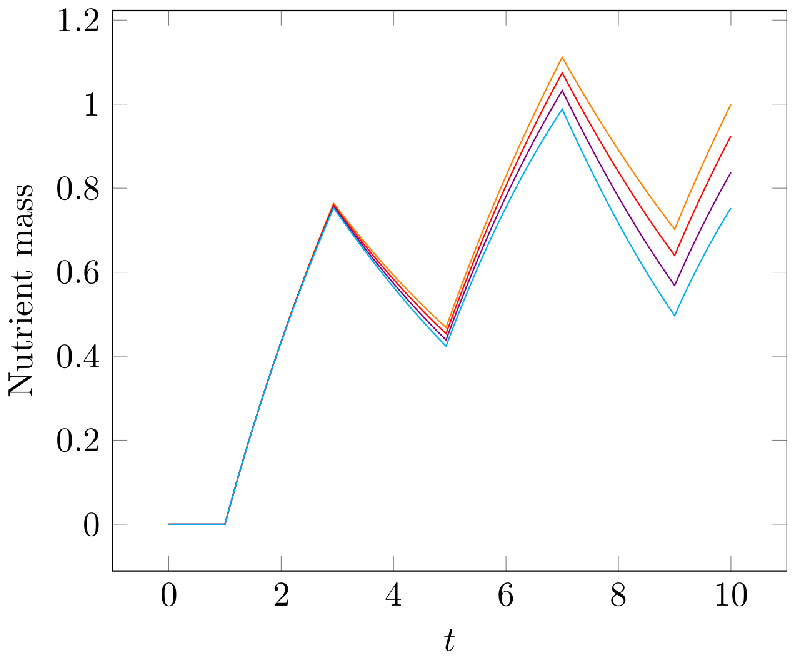}
	\caption[Initial condition with two ellipses.]{\revision{Reaction-diffusion model with two ellipses as initial condition. Tumour mass ($\int_\Omega\phi\,\textup{d}\vec{x}$) and nutrient mass ($\int_\Omega\psi\,\textup{d}\vec{x}$) over time obtained by giving a periodic nutrient source \eqref{eq:BC_chi}, for different values of the fractional exponent: \textcolor{orange}{-----} $\alpha=0.25$, \textcolor{red}{-----} $\alpha=0.5$, \textcolor{violet}{-----} $\alpha=0.75$, \textcolor{cyan}{-----} $\alpha=1$.}}
	\label{Fig_RD_mass_tumour_nutrient}
\end{figure}

\pagebreak
\subsection{Reaction-diffusion system with mechanical coupling in absence of treatment}\label{ssec:numexp_notreat}
The goal of this section is to show results of the reaction-diffusion model with a mechanical coupling as given in \eqref{eqn:ueq}, and \eqref{eqn:psieq} with $P_\phi=0$, in which case we have four variables. We use homogeneous Neumann boundary conditions for $\phi,\mu,\psi$, for $\vec{u}$ we use homogeneous Dirichlet boundary condition on the left boundary and homogeneous Neumann elsewhere. We give a constant nutrient source $S_\psi\equiv 0.5$. We consider both constant coefficients $M_{\phi}$, $M_{\psi}$ as from Table \ref{eq:param_val} and spatially-varying ones, given by $\tilde{M}_{\phi}=M_{\phi}\text{exp}(5(y-0.5))$, $\tilde{M}_{\psi}=M_{\psi}\text{exp}(5(y-0.5))$, with again  $M_{\phi}$, $M_{\psi}$ as from Table \ref{eq:param_val}. We note that both constants and non-constant coefficients take the same value at the centre of the domain, where we locate the irregularly shaped initial tumour mass
\begin{equation}\label{eq:icirregular}
\begin{cases}
\text{exp}\left(1- \frac{1}{1-f(\vec{x})}\right) & \mbox{if}\quad f(\vec{x})<1, -0.45<x<0.2, -0.4<y<0.35, \\
0 & \mbox{otherwise},
\end{cases}
\end{equation}
where $f(\vec{x})= \sin(6x+2y+1)(7x-0.2)^2+\sin(-8x+10y+1.1)(9x-0.1)^2$. This initial condition is depicted in the left plot of Figure \ref{Fig:irrshape}. As in the previous experiments, $\psi_0\equiv 0$. In this section, we compare the results when using $\alpha=0.25$ and $\alpha=1$.

Figure \ref{Fig_RD_MD_mass} shows the evolution of tumour mass and of the total displacement $\int_{\Omega} |\mathbf{u}|$\textrm{d}\vec{x} over time, when using constant and non-constant coefficients. In both cases, we observe that, apart from the very beginning, the tumour grows faster for $\alpha=1$, and consequently, the displacement of the tumour is larger in this case. It is then for $\alpha=1$ that, for later times, we can observe some difference between the case of constant and non-constant coefficients. The fact that the tumour grows more when $\alpha=1$ can also be seen in the cross sections along the $y$-axis at time $T=10$ in Figure \ref{Fig_crosssectionsmech} (tumour density in the left plot and modulus of the displacement in the right plot), where the dotted lines denote the corresponding initial conditions: we note that for $\alpha=0.25$ the shapes of the solutions at $T=10$ are closer to the initial conditions than for $\alpha=1$, for both constant and spatially-varying coefficients. Furthermore, in Figure \ref{Fig_crosssectionsmech} we see that the spatial variability of the coefficients (in the $y$-direction) translates in a more pronounced asymmetry of the solution with respect to the $y$-axis. Regarding the displacement, the asymmetry is more evident when $\alpha=1$, because there the magnitude of the displacement is larger compared to when $\alpha=0.25$.


\begin{figure}[H]\centering
\includegraphics[width=0.45\textwidth]{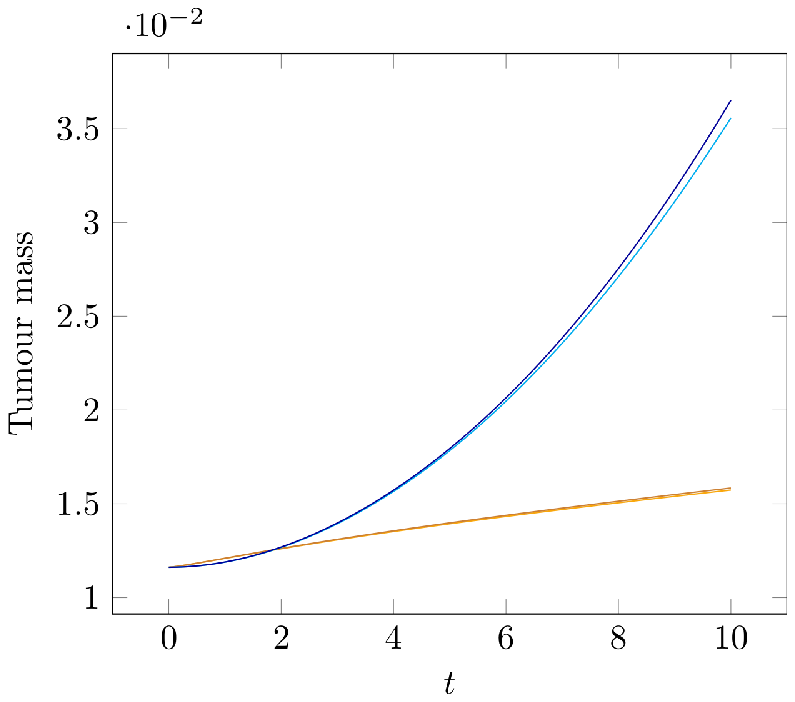}\qquad
\includegraphics[width=0.45\textwidth]{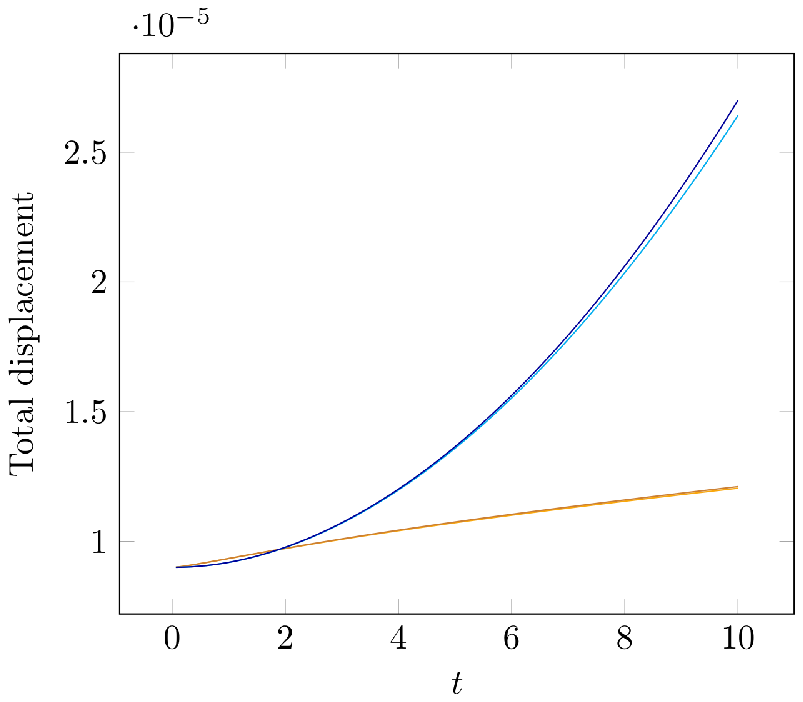}
\caption[Reaction-diffusion model with mechanical coupling ]{\revision{Reaction-diffusion model with mechanical coupling and initial condition \eqref{eq:icirregular}. Evolution of tumour mass ($\int_\Omega\phi\,\textup{d}\vec{x}$) and of total displacement ($\int_\Omega |\vec{u}|\,\textup{d}\vec{x}$) over time: \textcolor{yellow!30!orange}{-----} $\alpha=0.25$ and constant coefficients, \textcolor{orange!20!brown}{-----} $\alpha=0.25$ and spatially varying coefficients, \textcolor{cyan}{-----} $\alpha=1$ and constant coefficients, \textcolor{blue!60!black}{-----} $\alpha=1$ and spatially varying coefficients.}}
\label{Fig_RD_MD_mass}
\end{figure}

\begin{figure}[H]\centering
\includegraphics[height=4.15cm]{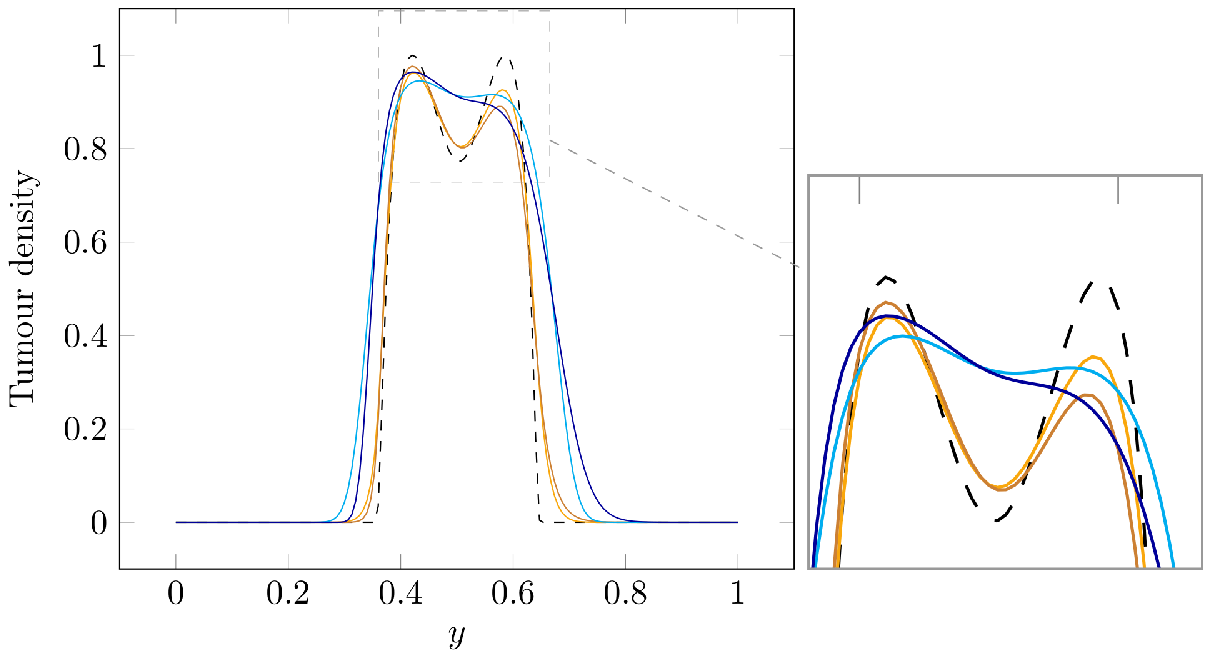} \quad 
\includegraphics[height=4.55cm]{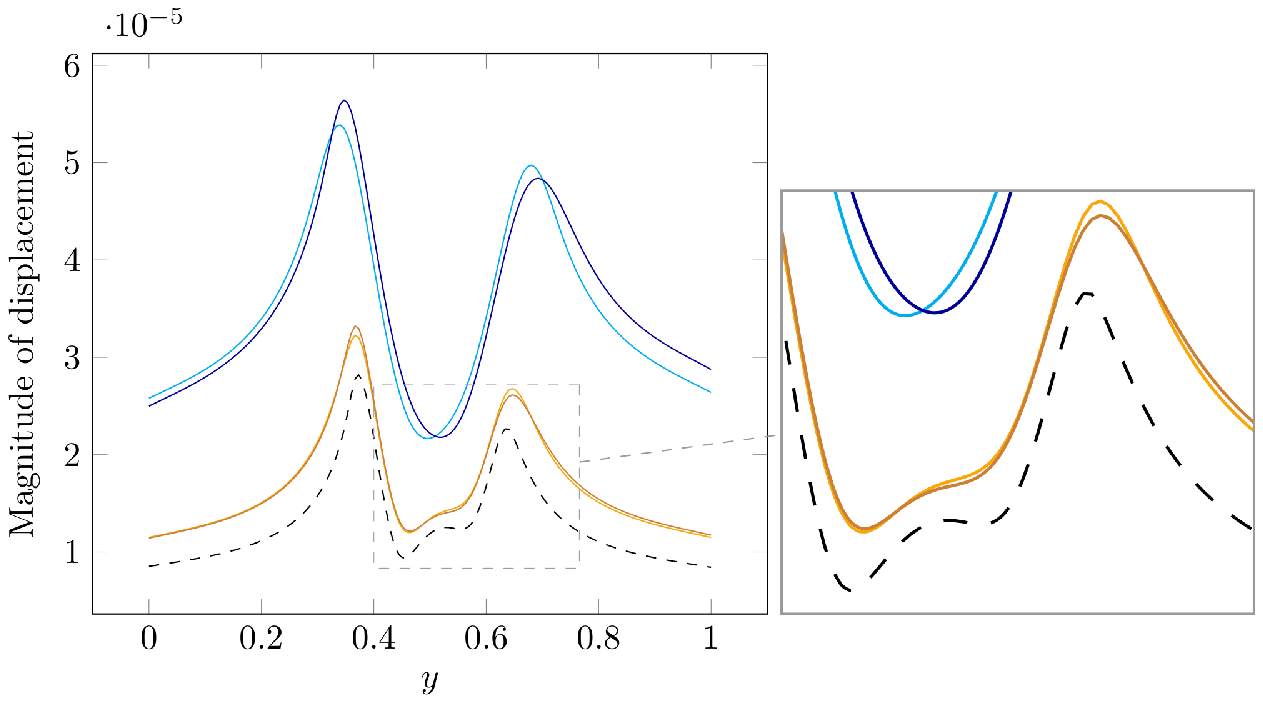}
\caption[RDM]{\revision{Reaction-diffusion model with mechanical coupling and initial condition \eqref{eq:icirregular}. Cross sections along the $y-$axis at $T=10$: \textcolor{yellow!30!orange}{-----} $\alpha=0.25$, constant coefficients, \textcolor{orange!20!brown}{-----} $\alpha=0.25$, spatially varying coefficients \textcolor{cyan}{-----} $\alpha=1$, constant coefficients, \textcolor{blue!60!black}{-----} $\alpha=1$, spatially varying coefficients. Left: cross section of the tumour volume fraction. Right: cross section of the modulus of the displacement. The dashed lines denote the corresponding initial conditions.}}
\label{Fig_crosssectionsmech}
\end{figure}

\subsection{Reaction-diffusion system with mechanical coupling and chemotherapy}\label{ssec:numexp_treat}
In this section, we show simulations of a more realistic situation and include the treatment of cancer by giving chemotherapeutic agents. In all, we have five unknowns, solving \eqref{eqn:phieq}-\eqref{eqn:chieq}. In the previous sections, nutrient supply with a source term for the nutrient could be thought as a situation close to an \textsl{in vitro} setting, where nutrients are added directly in the wells. Here, we assume nutrients and chemotherapeutic agents to be  supplied through some blood vessels which are around the tumour area, and so we take zero source functions $S_\psi=S_\chi\equiv 0$ and non-homogeneous Dirichlet boundary conditions for $\psi$ and $\chi$, over the whole boundary. We take $\tilde{\psi}_b\equiv 2$ as Dirichlet boundary condition for the nutrient and 
\begin{equation*}
       \tilde{\chi}_b(t)= \begin{cases}
    1 & \mbox{ if } \quad t\leq 2 \mbox{ or }6< t\leq 8\mbox{ or }12< t\leq 14,\\
    0 & \mbox{ otherwise,}
    \end{cases}
\end{equation*}
for the chemotherapeutic agents, which are usually administered in cycles.

The boundary conditions for $\phi,\mu$ and $\vec{u}$ are as in the previous section. We plot the mass of the tumour and chemotherapy for different values of $\alpha$. The initial condition for the tumour is the one with irregular shape as in the previous section and in the left plot of Figure \ref{Fig:irrshape}. We take $\chi_0\equiv 0$ for the chemotherapeutic agents. For the nutrient, \revision{we take an initial condition with values close to the concentration of the nutrient at equilibrium}, namely, we take $\psi_0(\vec{x})=2-0.5x(1-x)y(1-y)$.

\begin{figure}[!htb]
	\centering
	\includegraphics[height=0.3\textwidth]{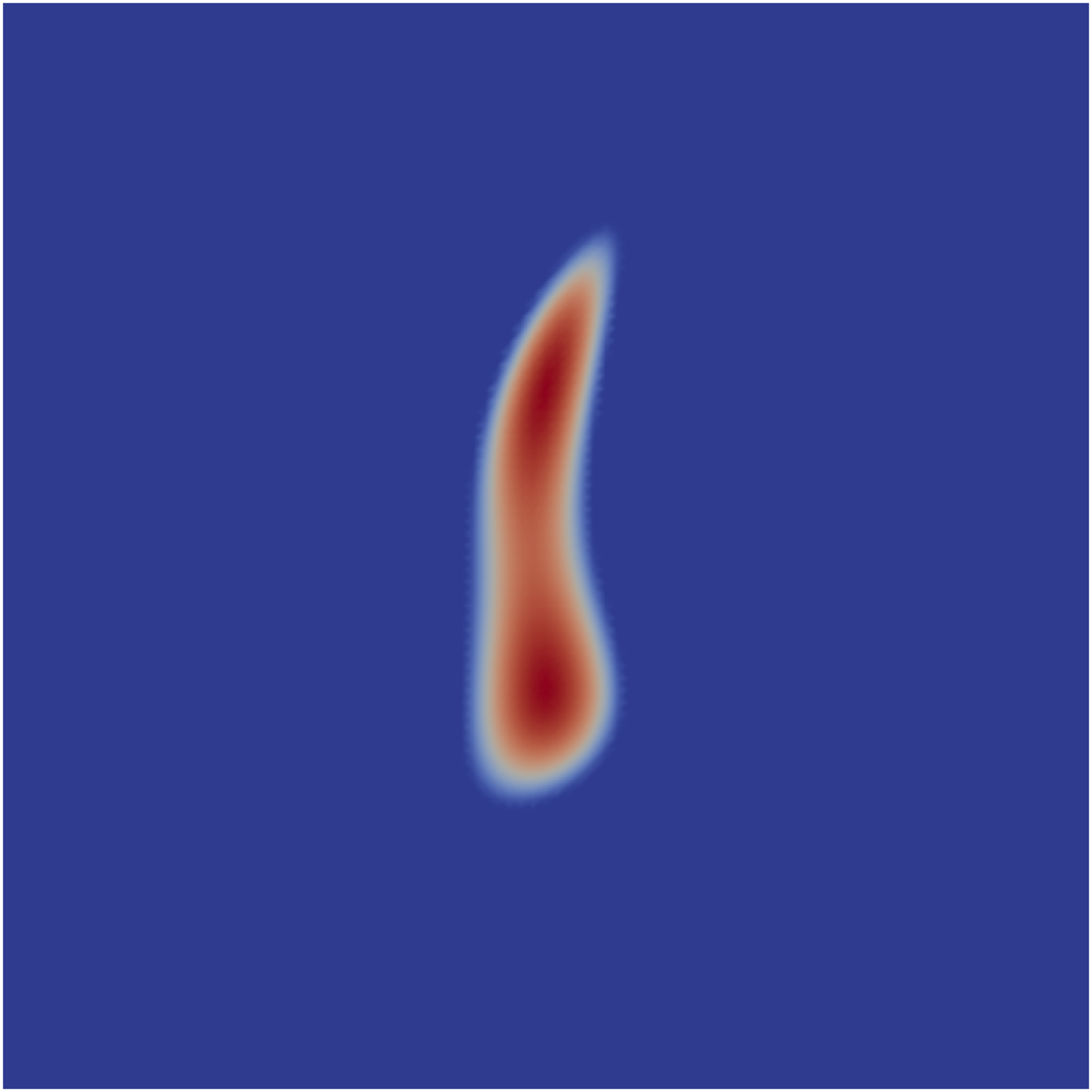}
	\includegraphics[height=0.3\textwidth]{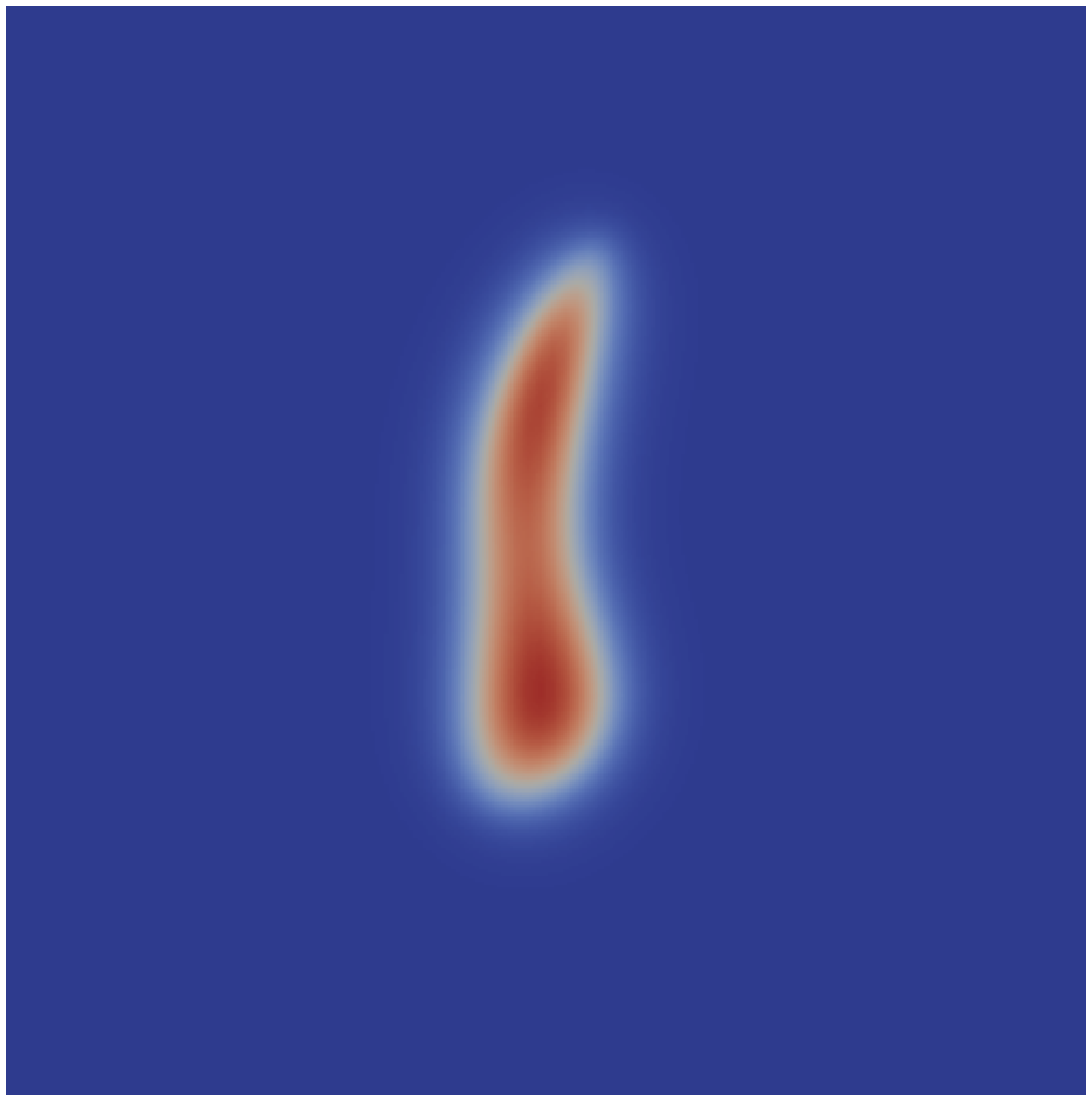}
	\includegraphics[height=0.3\textwidth]{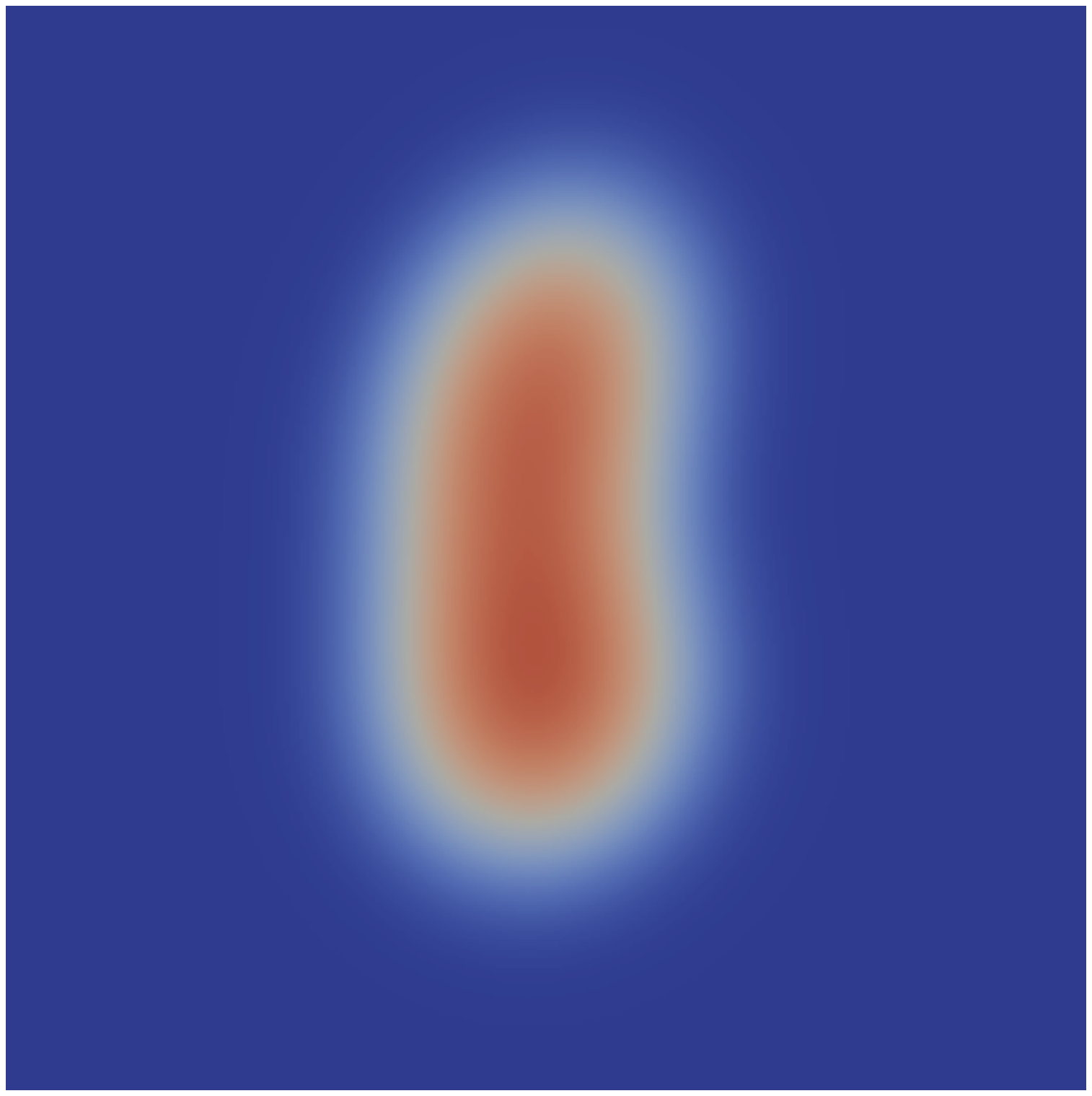}
	\caption{Left: zoom in $[0.2,0.8]^2$ of initial condition for the tumour with irregular shape. Centre and right: reaction-diffusion model with mechanical coupling and treatment, zoom in $[0.2,0.8]^2$ of tumour density at $T=20$ for $\alpha=0.25$ (centre) and $\alpha=1$ (right). The range for the colorbar has been fixed to be $[0,1]$ for all plots.}
	\label{Fig:irrshape}
\end{figure}

\pagebreak

The tumour densities at $T=20$ for $\alpha=0.25$ and $\alpha=1$ are shown in the centre and right plot of Figure \ref{Fig:irrshape}, respectively (the left plot showing the initial condition). Figure \ref{Fig_RD_MD_NU_CTD_tumour_chemo} shows the evolution of the tumour mass (left) and of the mass of chemotherapeutic agents for different values of the fractional exponent. From the left plot, we see that the response of the tumour to the therapy in the model depends sensitively on $\alpha$: the smaller the $\alpha$, the more nonlinear the responses to applying or removing the supply of chemotherapeutic substances. One could also think about using a piecewise constant $\alpha$, one for when chemotherapy is supplied, one when it is not, in order to model a different response of the tumour in these two scenarios. The mass of chemotherapy over time is instead very similar for all values of $\alpha$. In particular, when administration of the chemotherapeutic agents is interrupted, their concentration drops quickly to $0$ because of the degradation term $(-N_{\chi}\chi)$ in \eqref{eqn:chieq}.

\begin{figure}[!htb]\centering
\includegraphics[height=0.4\textwidth]{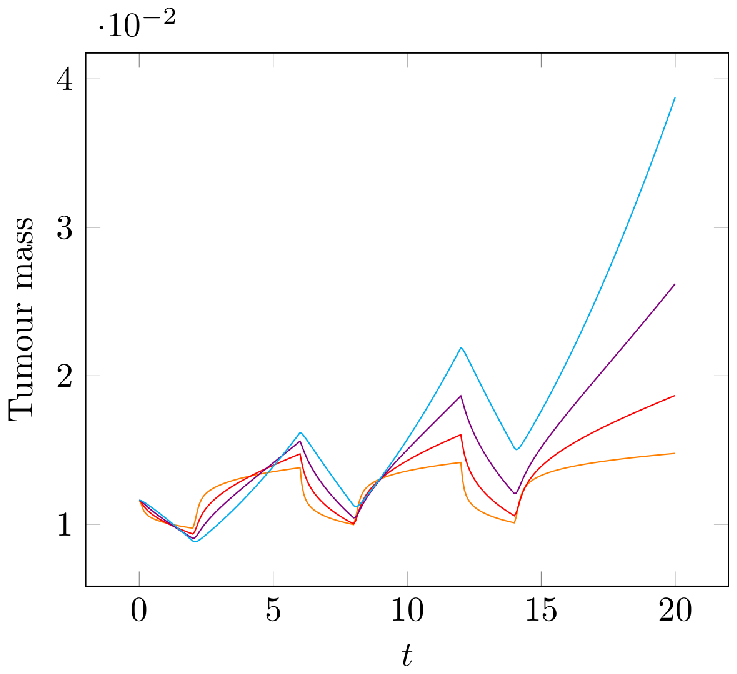}\qquad
\includegraphics[height=0.375\textwidth]{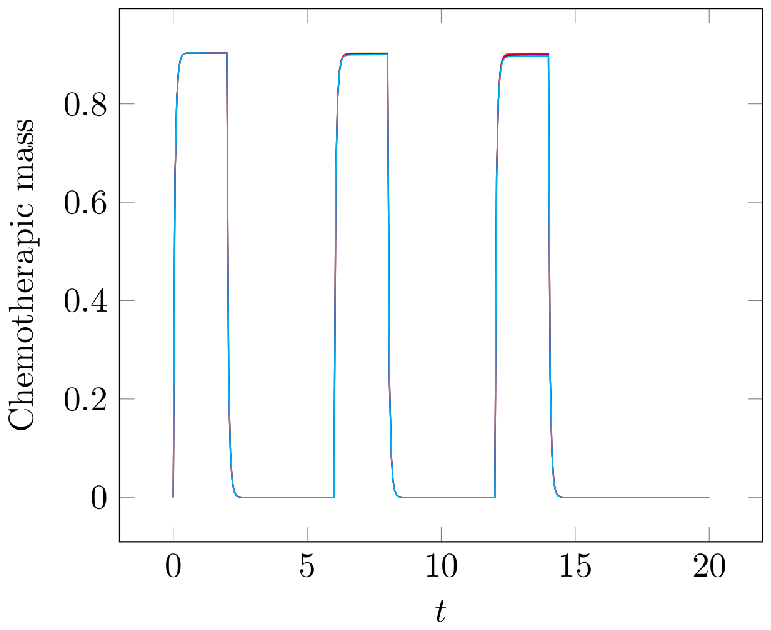}
\caption[RDMDchemof]{\revision{Reaction-diffusion model with mechanical coupling and treatment, and initial condition \eqref{eq:icirregular}. Tumour mass ($\int_\Omega\phi\,\textup{d}\vec{x}$) and chemotherapy mass ($\int_\Omega\chi\,\textup{d}\vec{x}$) over time: \textcolor{orange}{-----} $\alpha=0.25$, \textcolor{red}{-----} $\alpha=0.5$, \textcolor{violet}{-----} $\alpha=0.75$, \textcolor{cyan}{-----} $\alpha=1$.}}\label{Fig_RD_MD_NU_CTD_tumour_chemo}
\end{figure}

\section{Conclusions}
We have presented a new model for tumour growth with fractional time derivatives, including mechanical effects and treatment by chemotherapy. Existence and uniqueness of a weak solution to the coupled, nonlinear model are obtained by a Galerkin method. Numerical experiments, based on low order finite elements in space and convolution quadrature in time, show that the order of the fractional time derivative influences strongly the evolution. Using the fractional order as an additional parameter results in a larger model class. This can be of future interest for calibration of the model parameters by experimental data.

\section*{Funding}
Deutsche Forschungsgemeinschaft (DFG) through TUM International Graduate School of Science and Engineering (GSC 81); Laura Bassi Postdoctoral Fellowship (Technical University of Munich; to M.L.R.); and DFG (WO-671 11-1 to M.F., L.S. and B.W.).

\newpage

\bibliography{references.bib}

\begin{thebibliography}{10}

\bibitem{akilandeeswari2017solvability}
{\sc A.~Akilandeeswari, K.~Balachandran, and N.~Annapoorani}, {\em Solvability
  of hyperbolic fractional partial differential equations}, Journal of Applied
  Analysis and Computation, 7 (2017), pp.~1570--1585.

\bibitem{allen2016parabolic}
{\sc M.~Allen, L.~Caffarelli, and A.~Vasseur}, {\em A parabolic problem with a
  fractional time derivative}, Archive for Rational Mechanics and Analysis, 221
  (2016), pp.~603--630.

\bibitem{fenics}
{\sc M.~Aln{\ae}s, J.~Blechta, J.~Hake, A.~Johansson, B.~Kehlet, A.~Logg,
  C.~Richardson, J.~Ring, M.~E. Rognes, and G.~N. Wells}, {\em {The FEniCS
  project version 1.5}}, Archive of Numerical Software, 3 (2015).

\bibitem{Balkwill12}
{\sc F.~R. Balkwill, M.~Capasso, and T.~Hagemann}, {\em The tumor
  microenvironment at a glance}, Journal of Cell Science, 125 (2012),
  pp.~5591--5596.

\bibitem{bartkowiak2005cahn}
{\sc E.~Bartkowiak and I.~Paw{\l}ow}, {\em The cahn-hilliard-gurtin system
  coupled with elasticity}, Control and Cybernetics, 34 (2005), pp.~1005--1043.

\bibitem{boyer2013navier}
{\sc F.~Boyer and P.~Fabrie}, {\em Mathematical Tools for the Study of the
  Incompressible Navier--Stokes Equations and Related Models}, Springer, 2013.

\bibitem{brezis2010functional}
{\sc H.~Brezis}, {\em Functional Analysis, Sobolev Spaces and Partial
  Differential Equations}, Springer, 2010.

\bibitem{carrive1999cahn}
{\sc M.~Carrive, A.~Miranville, A.~Pi{\'e}trus, and J.~Rakotoson}, {\em The
  cahn-hilliard equation for an isotropic deformable continuum}, Applied
  mathematics letters, 12 (1999), pp.~23--28.

\bibitem{ciarlet2013linear}
{\sc P.~G. Ciarlet}, {\em Linear and Nonlinear Functional Analysis with
  Applications}, SIAM, 2013.

\bibitem{compte1996stochastic}
{\sc A.~Compte}, {\em Stochastic foundations of fractional dynamics}, Physical
  Review E, 53 (1996), p.~4191.

\bibitem{diethelm2010analysis}
{\sc K.~Diethelm}, {\em The Analysis of Fractional Differential Equations: An
  Application-Oriented Exposition using Differential Operators of Caputo Type},
  Springer, 2010.

\bibitem{djilali2018galerkin}
{\sc L.~Djilali and A.~Rougirel}, {\em Galerkin method for time fractional
  diffusion equations}, Journal of Elliptic and Parabolic Equations, 4 (2018),
  pp.~349--368.

\bibitem{Dumitru12}
{\sc B.~Dumitru, D.~Kai, and S.~Enrico}, {\em Fractional calculus: models and
  numerical methods}, World Scientific, 2012.

\bibitem{evans2010partial}
{\sc L.~C. Evans}, {\em Partial differential equations}, American Mathematical
  Soc., 2010.

\bibitem{faghihi2020coupled}
{\sc D.~Faghihi, X.~Feng, E.~Lima, J.~T. Oden, and T.~E. Yankeelov}, {\em A
  coupled mass transport and deformation theory of multi-constituent tumor
  growth}, Journal of the Mechanics and Physics of Solids,  (2020), p.~103936.

\bibitem{fedotov2007migration}
{\sc S.~Fedotov and A.~Iomin}, {\em Migration and proliferation dichotomy in
  tumor-cell invasion}, Physical Review Letters, 98 (2007), p.~118101.

\bibitem{fritz2019local}
{\sc M.~Fritz, E.~Lima, V.~Nikolic, J.~T. Oden, and B.~Wohlmuth}, {\em Local
  and nonlocal phase-field models of tumor growth and invasion due to ecm
  degradation}, Mathematical Models and Methods in Applied Sciences, 29 (2019),
  pp.~2433--2468.

\bibitem{fritz2018unsteady}
{\sc M.~Fritz, E.~Lima, J.~T. Oden, and B.~Wohlmuth}, {\em On the unsteady
  {D}arcy-{F}orchheimer-{B}rinkman equation in local and nonlocal tumor growth
  models}, Mathematical Models and Methods in Applied Sciences, 29 (2019),
  pp.~1691--1731.

\bibitem{garcke2003cahn}
{\sc H.~Garcke}, {\em On cahn—hilliard systems with elasticity}, Proceedings
  of the Royal Society of Edinburgh Section A: Mathematics, 133 (2003),
  pp.~307--331.

\bibitem{garcke2005mechanical}
\leavevmode\vrule height 2pt depth -1.6pt width 23pt, {\em Mechanical effects
  in the cahn-hilliard model: A review on mathematical results}, in
  Mathematical Methods and Models in phase transitions, A.~Miranville, ed.,
  Nova Science Publ., 2005, pp.~43--77.

\bibitem{garcke2005cahn}
\leavevmode\vrule height 2pt depth -1.6pt width 23pt, {\em On a cahn-hilliard
  model for phase separation with elastic misfit}, 22 (2005), pp.~165--185.

\bibitem{garcke2017well}
{\sc H.~Garcke and K.~F. Lam}, {\em Well-posedness of a {C}ahn--{H}illiard
  system modelling tumour growth with chemotaxis and active transport},
  European Journal of Applied Mathematics, 28 (2017), pp.~284--316.

\bibitem{garcke2019phase}
{\sc H.~Garcke, K.~F. Lam, and A.~Signori}, {\em On a phase field model of
  cahn-hilliard type for tumour growth with mechanical effects}, arXiv preprint
  arXiv:1912.01945,  (2019).

\bibitem{gorenflo2015time}
{\sc R.~Gorenflo, Y.~Luchko, and M.~Yamamoto}, {\em Time-fractional diffusion
  equation in the fractional sobolev spaces}, Fractional Calculus and Applied
  Analysis, 18 (2015), pp.~799--820.

\bibitem{gripenberg1990volterra}
{\sc G.~Gripenberg, S.~O. Londen, and O.~Staffans}, {\em Volterra Integral and
  Functional Equations}, Encyclopedia of Mathematics and its Applications,
  Cambridge University Press, 1990.

\bibitem{helmlinger1997solid}
{\sc G.~Helmlinger, P.~A. Netti, H.~C. Lichtenbeld, R.~J. Melder, and R.~K.
  Jain}, {\em Solid stress inhibits the growth of multicellular tumor
  spheroids}, Nature biotechnology, 15 (1997), pp.~778--783.

\bibitem{henry2006anomalous}
{\sc B.~Henry, T.~Langlands, and S.~Wearne}, {\em Anomalous diffusion with
  linear reaction dynamics: From continuous time random walks to fractional
  reaction-diffusion equations}, Physical Review E, 74 (2006), p.~031116.

\bibitem{hormuth2018mechanically}
{\sc D.~A. Hormuth, S.~L. Eldridge, J.~A. Weis, M.~I. Miga, and T.~E.
  Yankeelov}, {\em Mechanically coupled reaction-diffusion model to predict
  glioma growth: Methodological details}, in Cancer Systems Biology, Springer,
  2018, pp.~225--241.

\bibitem{iomin2005fractional}
{\sc A.~Iomin}, {\em Fractional transport of tumor cells}, WSEAS Trans. Biol.
  Biomed, 2 (2005), pp.~82--86.

\bibitem{iomin2005superdiffusion}
{\sc A.~Iomin}, {\em Superdiffusion of cancer on a comb structure}, in Journal
  of Physics: Conference Series, vol.~7, IOP Publishing, 2005, p.~57.

\bibitem{iomin2007fractional}
\leavevmode\vrule height 2pt depth -1.6pt width 23pt, {\em Fractional transport
  of cancer cells due to self-entrapment by fission}, in Mathematical Modeling
  of Biological Systems, Volume I, Springer, 2007, pp.~193--203.

\bibitem{iomin2015continuous}
{\sc A.~Iomin}, {\em Continuous time random walk and migration--proliferation
  dichotomy of brain cancer}, Biophysical Reviews and Letters, 10 (2015),
  pp.~37--57.

\bibitem{jiang2014anomalous}
{\sc C.~Jiang, C.~Cui, L.~Li, and Y.~Shao}, {\em The anomalous diffusion of a
  tumor invading with different surrounding tissues}, PloS one, 9 (2014).

\bibitem{Jin16}
{\sc B.~Jin, R.~Lazarov, and Z.~Zhou}, {\em Two fully discrete schemes for
  fractional diffusion and diffusion-wave equations with nonsmooth data}, SIAM
  journal on scientific computing, 38 (2016), pp.~A146--A170.

\bibitem{kemppainen2016decay}
{\sc J.~Kemppainen, J.~Siljander, V.~Vergara, and R.~Zacher}, {\em Decay
  estimates for time-fractional and other non-local in time subdiffusion
  equations in $\mathbb{R}^d$}, Mathematische annalen, 366 (2016),
  pp.~941--979.

\bibitem{kilbas2006theory}
{\sc A.~A. Kilbas, H.~M. Srivastava, and J.~J. Trujillo}, {\em Theory and
  Applications of Fractional Differential Equations, Volume 204 (North-Holland
  Mathematics Studies)}, Elsevier Science Inc., USA, 2006.

\bibitem{li2018generalized}
{\sc L.~Li and J.-G. Liu}, {\em A generalized definition of caputo derivatives
  and its application to fractional odes}, SIAM Journal on Mathematical
  Analysis, 50 (2018), pp.~2867--2900.

\bibitem{li2018some}
\leavevmode\vrule height 2pt depth -1.6pt width 23pt, {\em Some compactness
  criteria for weak solutions of time fractional pdes}, SIAM Journal on
  Mathematical Analysis, 50 (2018), pp.~3963--3995.

\bibitem{lima2016selection}
{\sc E.~Lima, J.~T. Oden, D.~Hormuth, T.~Yankeelov, and R.~Almeida}, {\em
  Selection, calibration, and validation of models of tumor growth},
  Mathematical Models and Methods in Applied Sciences, 26 (2016),
  pp.~2341--2368.

\bibitem{lima2017selection}
{\sc E.~Lima, J.~T. Oden, B.~Wohlmuth, A.~Shahmoradi, D.~Hormuth~II,
  T.~Yankeelov, L.~Scarabosio, and T.~Horger}, {\em Selection and validation of
  predictive models of radiation effects on tumor growth based on noninvasive
  imaging data}, Computer methods in applied mechanics and engineering, 327
  (2017), pp.~277--305.

\bibitem{lions2012non}
{\sc J.~L. Lions and E.~Magenes}, {\em Non-Homogeneous Boundary Value Problems
  and Applications I}, Springer, 2012.

\bibitem{Liu18}
{\sc H.~Liu, A.~Cheng, H.~Wang, and J.~Zhao}, {\em {Time-fractional Allen--Cahn
  and Cahn--Hilliard phase-field models and their numerical investigation}},
  Computers \& Mathematics with Applications, 76 (2018), pp.~1876--1892.

\bibitem{dolfin}
{\sc A.~Logg, G.~N. Wells, and J.~Hake}, {\em {DOLFIN: A C++/Python finite
  element library}}, in Automated solution of differential equations by the
  finite element method, Springer, 2012, pp.~173--225.

\bibitem{Lubich86}
{\sc C.~Lubich}, {\em Discretized fractional calculus}, SIAM Journal on
  Mathematical Analysis, 17 (1986), pp.~704--719.

\bibitem{Lubich88}
{\sc C.~Lubich}, {\em Convolution quadrature and discretized operational
  calculus. i}, Numerische Mathematik, 52 (1988), pp.~129--145.

\bibitem{manimaran2019numerical}
{\sc J.~Manimaran, L.~Shangerganesh, A.~Debbouche, and V.~Antonov}, {\em
  Numerical solutions for time-fractional cancer invasion system with nonlocal
  diffusion}, Front. Phys. 7: 93. doi: 10.3389/fphy,  (2019).

\bibitem{mclean2019well}
{\sc W.~McLean, K.~Mustapha, R.~Ali, and O.~Knio}, {\em Well-posedness of
  time-fractional advection-diffusion-reaction equations}, Fractional Calculus
  and Applied Analysis, 22 (2019), pp.~918--944.

\bibitem{lean2020regularity}
{\sc W.~McLean, K.~Mustapha, R.~Ali, and O.~M. Knio}, {\em Regularity theory
  for time-fractional advection--diffusion--reaction equations}, Computers \&
  Mathematics with Applications, 79 (2020), pp.~947--961.

\bibitem{metzler2000random}
{\sc R.~Metzler and J.~Klafter}, {\em The random walk's guide to anomalous
  diffusion: a fractional dynamics approach}, Physics reports, 339 (2000),
  pp.~1--77.

\bibitem{miranville2001long}
{\sc A.~Miranville}, {\em Long-time behavior of some models of cahn-hilliard
  equations in deformable continua}, Nonlinear Analysis: Real World
  Applications, 2 (2001), pp.~273--304.

\bibitem{miranville2003generalized}
\leavevmode\vrule height 2pt depth -1.6pt width 23pt, {\em Generalized
  cahn-hilliard equations based on a microforce balance}, Journal of Applied
  Mathematics, 2003 (2003), pp.~165--185.

\bibitem{nepomnyashchy2016mathematical}
{\sc A.~Nepomnyashchy}, {\em Mathematical modelling of subdiffusion-reaction
  systems}, Mathematical Modelling of Natural Phenomena, 11 (2016), pp.~26--36.

\bibitem{ouedjedi2019galerkin}
{\sc Y.~Ouedjedi, A.~Rougirel, and K.~Benmeriem}, {\em Galerkin method for time
  fractional semilinear equations}, Preprint, HAL-02124150,  (2019).

\bibitem{preziosi}
{\sc L.~E. Preziosi}, {\em Cancer modelling and simulation}, Chapman \& Hall /
  CRC Mathematical Biology and Medicine Series, 2003.

\bibitem{QuarteroniValli}
{\sc A.~Quarteroni and A.~Valli}, {\em Numerical approximation of partial
  differential equations}, vol.~23, Springer Science \& Business Media, 2008.

\bibitem{robinson2001infinite}
{\sc J.~C. Robinson}, {\em Infinite-Dimensional Dynamical Systems: An
  Introduction to Dissipative Parabolic PDEs and the Theory of Global
  Attractors}, vol.~28, Cambridge University Press, 2001.

\bibitem{seki2003recombination}
{\sc K.~Seki, M.~Wojcik, and M.~Tachiya}, {\em Recombination kinetics in
  subdiffusive media}, The Journal of chemical physics, 119 (2003),
  pp.~7525--7533.

\bibitem{simon1986compact}
{\sc J.~Simon}, {\em Compact sets in the space ${L}^p({0},{T};{B})$}, Annali di
  Matematica pura ed applicata, 146 (1986), pp.~65--96.

\bibitem{tahir1983correlated}
{\sc R.~Tahir-Kheli and R.~Elliott}, {\em Correlated random walk in lattices:
  tracer diffusion at general concentration}, Physical Review B, 27 (1983),
  p.~844.

\bibitem{vergara2008lyapunov}
{\sc V.~Vergara and R.~Zacher}, {\em Lyapunov functions and convergence to
  steady state for differential equations of fractional order}, Mathematische
  Zeitschrift, 259 (2008), pp.~287--309.

\bibitem{wang17}
{\sc M.~Wang, J.~Zhao, L.~Zhang, F.~Wei, Y.~Lian, Y.~Wu, Z.~Gong, S.~Zhang,
  J.~Zhou, K.~Cao, et~al.}, {\em Role of tumor microenvironment in
  tumorigenesis}, Journal of Cancer, 8 (2017), p.~761.

\bibitem{ye2007generalized}
{\sc H.~Ye, J.~Gao, and Y.~Ding}, {\em A generalized gronwall inequality and
  its application to a fractional differential equation}, Journal of
  Mathematical Analysis and Applications, 328 (2007), pp.~1075--1081.

\bibitem{yuan16}
{\sc Y.~Yuan, Y.-C. Jiang, C.-K. Sun, and Q.-M. Chen}, {\em Role of the tumor
  microenvironment in tumor progression and the clinical applications},
  Oncology reports, 35 (2016), pp.~2499--2515.

\bibitem{yuste2004reaction}
{\sc S.~Yuste, L.~Acedo, and K.~Lindenberg}, {\em Reaction front in an $a+
  b\rightarrow c$ reaction-subdiffusion process}, Physical Review E, 69 (2004),
  p.~036126.

\bibitem{zacher2009weak}
{\sc R.~Zacher}, {\em Weak solutions of abstract evolutionary
  integro-differential equations in {H}ilbert spaces}, Funkcialaj Ekvacioj, 52
  (2009), pp.~1--18.

\bibitem{zacher2019time}
\leavevmode\vrule height 2pt depth -1.6pt width 23pt, {\em Time fractional
  diffusion equations: solution concepts, regularity, and long-time behavior},
  Fractional Differential Equations,  (2019), p.~159.

\bibitem{Zeng13}
{\sc F.~Zeng, C.~Li, F.~Liu, and I.~Turner}, {\em The use of finite
  difference/element approaches for solving the time-fractional subdiffusion
  equation}, SIAM Journal on Scientific Computing, 35 (2013), pp.~A2976--A3000.

\end{thebibliography}
\bibliographystyle{siam} 

\newpage 

\appendix
\section{Existence Result for Nonlinear Finite Dimensional System}\label{appendix:existence_finitedim}
Consider the multi-order fractional differential system of the form
\begin{eqnarray}\label{finite_dimension:de}
\begin{aligned}
\frac{d}{dt}\left(g_{1-\alpha_k}* (X_k(t)-X_{k,0})\right)(t) &= F_k(t,X_1(t),\ldots,X_m(t)), &k=1,\ldots,m,\\
\left(g_{1-\alpha_k}*(X_k-X_{k,0})\right)(0)&=0, &k=1,\ldots,m,
\end{aligned}
\end{eqnarray}
where $0<\alpha_k\leq 1$, $X_k:[0,T]\rightarrow \mathbb{R}$, $F_k:[0,T]\times \R^m\rightarrow\R$ is such that $F_k(\cdot,X_1,\ldots,X_m)\in\L_2(0,T)$ and it is Lipschitz in the other variables. Existence of a solution to a similar system with continuous function $F_k$ is given in \cite[Lemma 5.3]{diethelm2010analysis}, here we prove the result in the vector form. In the vector notation, the system \eqref{finite_dimension:de} can be written as
\begin{equation}
    \begin{aligned}
    D^{\vec{\alpha}}(\vec{X}-\vec{X}_0) &= \vec{F}(t,\vec{X}(t)),\\
   \left(\vec{k}*(\vec{X}-\vec{X}_0)\right)(0) &= \vec{0},
    \end{aligned}\label{FD:vec_DE}
\end{equation}
where 
\begin{equation*}
    \begin{aligned}
    D^{\vec{\alpha}}(\vec{X}-\vec{X}_0)= \frac{\dd}{\dd t}\left( \vec{k}*(\vec{X}-\vec{X}_0)\right)(t),\
    \vec{X}(t) =  \left( {\begin{array}{c}
   X_1(t) \\
   \vdots\\
   X_m(t) \\
  \end{array} } \right),\ 
  \vec{X}_0 =  \left( {\begin{array}{c}
   X_{1,0} \\
   \vdots\\
   X_{m,0} \\
  \end{array} } \right),\\
  \vec{k}(t)= \left( {\begin{array}{cccc}
   g_{1-\alpha_1} & 0 & \ldots & 0 \\
   \vdots&&&\\
   0&\ldots&0& g_{1-\alpha_m}\\
  \end{array} } \right),\
      \vec{F}(t,\vec{X}(t)) =  \left( {\begin{array}{c}
   F_1(t,X_1(t),\ldots,X_m(t)) \\
   \vdots\\
   F_m(t,X_1(t),\ldots,X_m(t)) \\
  \end{array} } \right).\\
    \end{aligned}
\end{equation*}
\begin{lemma}\label{FD:equivalence_DE_IE}
Let $\vec{F}(\cdot,\vec{X})\in\L_2(0,T;\R^m)$ for any $\vec{X}\in\R^m$ and  $\vec{X}(\cdot)\in\L_{2}(0,T;\R^m)$. Then $\vec{X}(t)$ satisfies \eqref{FD:vec_DE} if, and only if, $\vec{X}(t)$ satisfies the following Volterra integral equation
\begin{eqnarray}\label{FD:vec_IE}
\vec{X}(t)  = \vec{X}_0 + \int_0^t \vec{l}(t-s)\vec{F}(s,\vec{X}(s))\mathrm{d}s,
\end{eqnarray}
where 
\begin{equation*}
    \begin{aligned}
  \vec{l}(t)= \left( {\begin{array}{cccc}
   g_{\alpha_1} & 0 & \ldots & 0 \\
   \vdots&&&\\
   0&\ldots&0& g_{\alpha_m}\\
  \end{array} } \right).
    \end{aligned}
\end{equation*}
\end{lemma}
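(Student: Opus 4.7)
Since both matrices $\vec{k}(t)$ and $\vec{l}(t)$ are diagonal, the system \eqref{FD:vec_DE} uncouples into $m$ scalar equations, and the vector integral identity \eqref{FD:vec_IE} likewise splits into $m$ independent scalar Volterra identities. It therefore suffices to prove, for each fixed $k\in\{1,\ldots,m\}$, that the scalar problem
\begin{equation*}
\tfrac{d}{dt}\bigl(g_{1-\alpha_k}*(X_k-X_{k,0})\bigr)(t)=F_k(t,\vec{X}(t)),\qquad \bigl(g_{1-\alpha_k}*(X_k-X_{k,0})\bigr)(0)=0,
\end{equation*}
is equivalent to $X_k(t)=X_{k,0}+\bigl(g_{\alpha_k}*F_k(\cdot,\vec{X}(\cdot))\bigr)(t)$. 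The workhorse will be the semigroup identity $g_{\alpha_k}*g_{1-\alpha_k}=g_1\equiv 1$, already used in the derivation of \eqref{model:tumour_mechcoup_frac}.

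For the forward direction, I first integrate the differential equation in $t$, using the vanishing initial trace, to obtain $\bigl(g_{1-\alpha_k}*(X_k-X_{k,0})\bigr)(t)=\bigl(1*F_k(\cdot,\vec{X}(\cdot))\bigr)(t)$. Convolving both sides with $g_{\alpha_k}$, then invoking associativity of convolution on the positive half-line together with the semigroup identity, collapses the left-hand side to $1*(X_k-X_{k,0})$. Both sides are now absolutely continuous functions of $t$ that vanish at $t=0$, so differentiating in $t$ yields exactly the stated Volterra equation for $X_k$.

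The converse is essentially the same argument run backwards. Starting from $X_k(t)-X_{k,0}=(g_{\alpha_k}*F_k(\cdot,\vec{X}(\cdot)))(t)$, I convolve with $g_{1-\alpha_k}$ and apply the semigroup identity once more to produce $g_{1-\alpha_k}*(X_k-X_{k,0})=1*F_k(\cdot,\vec{X}(\cdot))$. The right-hand side visibly vanishes at $t=0$, delivering the initial condition of \eqref{FD:vec_DE}, while differentiation in $t$ recovers the fractional differential equation.

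The one technicality worth spelling out is that all convolutions in sight are well defined and that the reorderings performed above are legal. Since $g_{\alpha_k}\in \L_1(0,T)$ for $\alpha_k\in(0,1]$ and $F_k(\cdot,\vec{X})\in \L_2(0,T)$ by assumption, Young's convolution inequality \eqref{inequality:Youngs} places $g_{\alpha_k}*F_k(\cdot,\vec{X})$ in $\L_2(0,T)$, consistent with the regularity we prescribe for $X_k$. The swapping of iterated convolutions is justified by Fubini's theorem applied on the positive half-line. I do not expect a genuine obstacle in this argument; once the semigroup identity is available, the remaining work is careful bookkeeping.
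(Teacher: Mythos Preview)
Your proposal is correct and takes essentially the same approach as the paper: both proofs hinge on the semigroup identity $g_{\alpha_k}*g_{1-\alpha_k}=g_1=1$ (written as $\vec{l}*\vec{k}=\vec{1}$ in the paper), convolving the equation with the complementary kernel and then differentiating. The only cosmetic differences are that the paper works in vector form throughout rather than reducing to components, and in the forward direction it convolves directly with $\vec{l}$ and invokes the commutation $\vec{l}*\partial_t(\vec{k}*\cdot)=\partial_t(\vec{l}*\vec{k}*\cdot)$ (valid by the vanishing initial trace) instead of first integrating and later differentiating as you do; these are equivalent rearrangements of the same computation.
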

\begin{proof}
First we prove necessity. Let $\vec{X}(\cdot)\in\L_{2}(0,T;\R^m)$ satisfy \eqref{FD:vec_DE}. With the initial condition $\left(\vec{k}*(\vec{X}-\vec{X}_0)\right)(0)=0$ and the result $\vec{l}*\vec{k}=\vec{1}$, we have
\begin{eqnarray}\label{FD:libnitz_rule}
\vec{l}*\frac{\dd}{\dd t} (\vec{k}*(\vec{X}-\vec{X}_0))(t) = \frac{\dd}{\dd t} (\vec{l}*\vec{k}*(\vec{X}-\vec{X}_0))(t).
\end{eqnarray} 
Taking a convolution with $\vec{l}$ on both sides of \eqref{FD:vec_DE}, using \eqref{FD:libnitz_rule}, we obtain \eqref{FD:vec_IE}, and hence the necessity is proved.

Now we prove the sufficiency. Let $\vec{X}(\cdot)\in\L_{2}(0,T;\R^m)$ satisfy \eqref{FD:vec_IE}. Taking a convolution with $\vec{k}$ and differentiating on both sides of \eqref{FD:vec_IE}, using $\vec{l}*\vec{k}=\vec{1}$, we arrive at \eqref{FD:vec_DE}. Further from the continuity of $\left(\vec{1}*\vec{F}(t,\vec{X})\right)(t)$, we have $\left(\vec{1}*\vec{F}(t,\vec{X}(t))\right)(0)=0$, which implies $\left(\vec{k}*(\vec{X}-\vec{X}_0)\right)(0)=\vec{0}$, and this proves the sufficiency part.
\end{proof}

\begin{lemma}{(Banach Fixed point theorem)}[{\cite[Theorem 1.9]{kilbas2006theory}}] Let $(\Ba,d)$ be a nonempty complete metric space, let $0\leq \omega<1$, and let $\Lambda:\Ba\rightarrow \Ba$ be a map such that, for every $\varphi_1, \varphi_2\in \Ba$, the relation
$$d(\Lambda\varphi_1,\Lambda\varphi_2)\leq \omega d(\varphi_1,\varphi_2)$$
holds. Then the operator $\Lambda$ has a unique fixed point $\varphi^*\in \Ba$. Furthermore, if $\{\Lambda^k\}_{k\in\N}$ is the sequence of operators defined by
$$\Lambda^1=\Lambda,\quad \Lambda^k=\Lambda\Lambda^{k-1},\, \forall k\in \N\backslash \{1\},$$
then, for any $\varphi_0\in U$, the sequence $\{\Lambda^k \varphi_0\}_{k=1}^{\infty}$ converges to the above fixed point $\varphi^*$.
        \label{Lem_banach_fixed_point_theorem}
\end{lemma}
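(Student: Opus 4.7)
The plan is the classical Picard iteration argument. Starting from an arbitrary $\varphi_0 \in \Ba$, I would construct the sequence $\{\varphi_k\}_{k\in\N}$ by $\varphi_k := \Lambda^k \varphi_0$ and show that it is Cauchy in $(\Ba,d)$. By iterating the contraction hypothesis I obtain
$$d(\varphi_{k+1},\varphi_k) = d(\Lambda \varphi_k, \Lambda \varphi_{k-1}) \leq \omega\, d(\varphi_k,\varphi_{k-1}) \leq \ldots \leq \omega^k d(\varphi_1,\varphi_0).$$
Then for any $m > n$, the triangle inequality combined with a geometric sum yields
$$d(\varphi_m,\varphi_n) \leq \sum_{j=n}^{m-1} d(\varphi_{j+1},\varphi_j) \leq d(\varphi_1,\varphi_0)\sum_{j=n}^{m-1}\omega^j \leq \frac{\omega^n}{1-\omega}\, d(\varphi_1,\varphi_0),$$
where $\omega<1$ makes the geometric series convergent and the right-hand side tends to zero as $n\to\infty$, uniformly in $m$. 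Hence $\{\varphi_k\}$ is Cauchy.

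Next, invoking completeness of $(\Ba,d)$, there exists $\varphi^*\in \Ba$ such that $\varphi_k \to \varphi^*$. To verify that $\varphi^*$ is a fixed point, I would use that the contraction property trivially implies Lipschitz continuity of $\Lambda$ (with constant $\omega$), which in a metric space ensures sequential continuity. Therefore
$$d(\Lambda \varphi^*, \varphi^*) \leq d(\Lambda \varphi^*, \Lambda \varphi_k) + d(\varphi_{k+1}, \varphi^*) \leq \omega\, d(\varphi^*,\varphi_k) + d(\varphi_{k+1},\varphi^*) \xrightarrow[k\to\infty]{} 0,$$
so $\Lambda \varphi^* = \varphi^*$. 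The convergence statement for $\{\Lambda^k \varphi_0\}$ is precisely what was just established, since the starting point $\varphi_0$ was arbitrary.

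Finally, for uniqueness, suppose $\varphi^*$ and $\psi^*$ are both fixed points of $\Lambda$. Applying the contraction inequality directly gives
$$d(\varphi^*,\psi^*) = d(\Lambda \varphi^*, \Lambda \psi^*) \leq \omega\, d(\varphi^*,\psi^*),$$
which, since $\omega < 1$, forces $d(\varphi^*,\psi^*) = 0$ and hence $\varphi^* = \psi^*$. There is no real obstacle here: the proof is entirely self-contained once completeness is invoked, and the only mildly delicate step is the Cauchy estimate via the geometric tail bound. Since this is the classical Banach contraction mapping theorem quoted from \cite{kilbas2006theory}, I would in practice simply cite it rather than reprove it within the paper.
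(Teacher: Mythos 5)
Your proof is the standard Picard iteration argument and is correct in every step: the geometric tail bound establishes the Cauchy property, completeness gives the limit, the $\omega$-Lipschitz continuity of $\Lambda$ identifies the limit as a fixed point, and the contraction inequality forces uniqueness. The paper itself gives no proof of this lemma at all --- it is stated purely as a citation of \cite[Theorem 1.9]{kilbas2006theory} --- so your closing remark that one would simply cite it rather than reprove it is exactly what the authors do.
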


\begin{theorem}
The initial value problem given by the system of multi-order fractional differential equations along with the initial condition (\ref{finite_dimension:de}) has a uniquely determined solution on the interval $[0,T]$.
\end{theorem}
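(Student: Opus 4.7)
The natural route is to recast \eqref{finite_dimension:de} as a Volterra integral equation and apply the Banach fixed point theorem (Lemma \ref{Lem_banach_fixed_point_theorem}). By Lemma \ref{FD:equivalence_DE_IE}, a function $\vec{X}\in L_2(0,T;\R^m)$ solves \eqref{finite_dimension:de} if and only if it is a fixed point of the operator
\begin{equation*}
(\Lambda \vec{X})(t) := \vec{X}_0 + \int_0^t \vec{l}(t-s)\vec{F}(s,\vec{X}(s))\,\mathrm{d}s,
\end{equation*}
so it suffices to show that $\Lambda$ has a unique fixed point in a suitable complete metric space. I would take this space to be $L_2(0,T;\R^m)$, or, for cleaner estimates, $C([0,T];\R^m)$ once one observes that the image of $\Lambda$ lies in the latter space by continuity of the convolution with the $L_1$ kernels $g_{\alpha_k}$.

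First I would verify that $\Lambda$ maps the chosen space into itself. Since $g_{\alpha_k}\in L_1(0,T)$ for each $k$, Young's convolution inequality \eqref{inequality:Youngs} together with the hypothesis $\vec{F}(\cdot,\vec{X})\in L_2(0,T;\R^m)$ and the Lipschitz assumption yield a bound
\begin{equation*}
\|\Lambda\vec{X}\|_{L_2(0,T;\R^m)} \leq C\bigl(\|\vec{X}_0\|\sqrt{T} + \|\vec{F}(\cdot,\vec{0})\|_{L_2(0,T;\R^m)} + L\,T^{\alpha_\ast}\|\vec{X}\|_{L_2(0,T;\R^m)}\bigr),
\end{equation*}
with $\alpha_\ast := \min_k \alpha_k$ and $L$ the Lipschitz constant of $\vec{F}$, which guarantees $\Lambda\vec{X}\in L_2(0,T;\R^m)$. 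For two candidates $\vec{X},\vec{Y}$, the Lipschitz estimate gives pointwise
\begin{equation*}
|(\Lambda\vec{X})(t)-(\Lambda\vec{Y})(t)| \leq L \int_0^t \max_k g_{\alpha_k}(t-s)\,|\vec{X}(s)-\vec{Y}(s)|\,\mathrm{d}s,
\end{equation*}
and the contractivity must be extracted from this inequality.

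The main obstacle is that the kernels $g_{\alpha_k}(t-s)=(t-s)^{\alpha_k-1}/\Gamma(\alpha_k)$ have a weak singularity at $s=t$ when $\alpha_k<1$, which prevents a direct sup-norm contraction argument on the full interval. I would handle this in one of two equivalent ways. Either (i) split $[0,T]$ into finitely many subintervals $[t_n,t_{n+1}]$ whose lengths are chosen small enough (uniformly, since $L$ is constant) that Young's inequality yields a strict contraction on each piece, and then concatenate the solutions using the initial value at $t_n$ read from the preceding step; or (ii) introduce a Bielecki-type weighted norm of the form $\|\vec{X}\|_\lambda := \sup_{t\in[0,T]} E_{\alpha_\ast,1}(-\lambda t^{\alpha_\ast})\|\vec{X}(t)\|$, respectively an $L_2$ analogue, and choose $\lambda$ large enough that the Mittag--Leffler weight dominates the singular convolution, producing a contraction on the whole of $[0,T]$ in one step. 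Either strategy, combined with Lemma \ref{Lem_banach_fixed_point_theorem}, yields a unique fixed point $\vec{X}^\ast$ which, by Lemma \ref{FD:equivalence_DE_IE}, is the unique solution of \eqref{finite_dimension:de} on $[0,T]$.
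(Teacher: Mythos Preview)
Your proposal is correct and follows essentially the same route as the paper: convert to the Volterra integral equation via Lemma~\ref{FD:equivalence_DE_IE}, set up the fixed-point operator $\Lambda$, use Young's convolution inequality \eqref{inequality:Youngs} with $\vec{l}\in L_1$ to show $\Lambda$ is a self-map and a contraction on a short subinterval, and then extend. The paper works in $L_2(0,T_h;\R^m)$, chooses $T_h$ small enough that $L_{\vec F}\|\vec{l}\|_{L_1(0,T_h)}<1$, and then pushes to the full interval by an a~priori $L_2$ bound; this is your option (i). Your Bielecki-weight alternative (ii) is not used in the paper but would also work and has the mild advantage of avoiding the concatenation step, which for fractional equations carries the memory term from $[0,t_n]$ as a fixed inhomogeneity on each subinterval.
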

\begin{proof}
To prove the existence for the nonlinear differential equation \eqref{FD:vec_DE} it is enough to show the existence to its equivalent integral equation \eqref{FD:vec_IE} as shown in Lemma \ref{FD:equivalence_DE_IE}. The nonlinear integral equation is converted to a linear integral equation and Banach fixed point theorem is used to show the existence of the unique solution to \eqref{FD:vec_IE}.
\par For a particular $\vec{Y}\in\L_2(0,T;\R^m)$, we obtain the corresponding linear equation to (\ref{FD:vec_IE}) as 
\begin{eqnarray}\label{FD:linear_system}
\vec{X}(t)  = \vec{X}_0 + \int_0^t \vec{l}(t-s)\vec{F}(s,\vec{Y}(s))\mathrm{d}s. 
\end{eqnarray}
Define the operator $\Lambda$ on $\Ba:=\L_2(0,T_h;\R^m)$ for some $T_h>0$ as
\begin{equation*}
     \Lambda\vec{Y}(t):= \vec{X}_0 + \int_0^t \vec{l}(t-s)\vec{F}(s,\vec{Y}(s))\mathrm{d}s. 
\end{equation*}
Using Young's inequality for convolution \eqref{inequality:Youngs}, we have
\begin{equation}\label{FD:est:map}
\begin{aligned}
    \|\Lambda\vec{Y}\|^2_{\L_2(0,T_h;\R^m)} &\leq C\left(\|\vec{X}_0\|_{\R^m} + \|\vec{l}\|_{\L_1(0,T_h;\R^m)}\|\vec{F}(t,\vec{Y})\|_{\L_2(0,T_h;\R^m)}\right),\\
    &\leq C\left(\|\vec{X}_0\|_{\R^m} + \|\vec{l}\|_{\L_1(0,T;\R^m)}\left(L_{\vec{F}}\|\vec{Y}\|_{\L_2(0,T_h;\R^m)}+\|\vec{F}(t,\vec{0})\|_{\L_2(0,T;\R^m)}\right)\right),\\
    &\leq C \|\vec{Y}\|_{\L_2(0,T_h;\R^m)}.
\end{aligned}
\end{equation}
This means that $\Lambda$ maps $\Ba$ into itself. Further we get
\begin{equation*}
    \begin{aligned}
     \|\Lambda\vec{Y}-\Lambda\vec{Z}\|_{\L_2(0,T_h;\R^m)} &\leq \|\left(\vec{l}*(\vec{F}(t,\vec{Y})-\vec{F}(t,\vec{Z}))\right)(t)\|_{\L_2(0,T_h;\R^m)}, \\
   &\leq L_{\vec{F}}  \|\vec{l}\|_{\L_1(0,T_h;\R^m)} \|\vec{Y}-\vec{Z}\|_{\L_2(0,T_h;\R^m)}.
    \end{aligned}
\end{equation*}
We choose $T_h>0$ such that $L_{\vec{F}} \|\vec{l}\|_{\L_1(0,T_h;\R^m)}<1$, which means $\Lambda$ is a contraction.
By Lemma \ref{Lem_banach_fixed_point_theorem} there exists a unique solution $\vec{X}\in\L_2(0,T_h;\R^m)$ to (\ref{FD:linear_system}) on the interval $[0,T_h]$. 
Further we see that for any $\tau\in(0,T)$, we have by proceeding as before in \eqref{FD:est:map}
\begin{equation*}
    \begin{aligned}
    \|\vec{X}\|_{\L_2(0,\tau;\R^m)}\leq C, 
    \end{aligned}
\end{equation*}
for some constant independent of $\tau$. Therefore we obtain $\vec{X}\in\L_2(0,T;\R^m)$. Then $\vec{F}\in\L_2(0,T;\R^m)$, and the initial condition implies $\vec{X}\in\W^{\alpha}_{2,2}(0,T;\vec{X}_0,\R^m,\R^m)$.
\end{proof}

\end{document}